\theoremstyle{plain}
\newtheorem{theorem}{Theorem}
\newtheorem{corollary}{Corollary}
\newtheorem{lemma}{Lemma}
\newtheorem{proposition}{Proposition}
\theoremstyle{definition}
\newtheorem{conjecture}{Conjecture}
\theoremstyle{remark}
\newtheorem{remark}{Remark}
\pgfplotsset{compat=1.18}
\DeclareRobustCommand{\seqnum}[1]{%
  \ifmmode
    \text{\href{https://oeis.org/#1}{\textcolor{blue}{{\normalfont\ttfamily #1}}}}%
  \else
    \href{https://oeis.org/#1}{\textcolor{blue}{{\normalfont\ttfamily #1}}}%
  \fi
}
\author{El-Mehdi Mehiri \orcidlink{0000-0002-7164-3658}\\
Mines Saint-Etienne, CMP, Department of Manufacturing Sciences and Logistics, F-13120 Gardanne, France.\\
\url{elmehdi.mehiri@emse.fr}\\
\url{mehiri314@gmail.com}}
\title{\textbf{The Parity-Constrained Four-Peg Tower of Hanoi Problem and Its State Graph\thanks{\textcopyright   E.-M.Mehiri 2026}
}}
\date{19-06-2026}
\begin{document}
 
 \maketitle

\begin{abstract}
 \noindent We introduce and study a parity-constrained variant of the four-peg Tower of
Hanoi problem. In this model, two pegs are neutral, while the two remaining
pegs are reserved respectively for even-labelled and odd-labelled discs. Starting
from the classical initial tower, we consider four natural transfer objectives
corresponding to different target configurations of the full tower and of the
even and odd subtowers.

  \noindent For these four objectives, we propose a system of recursive algorithms based on
parity separation and classical three-peg transfers. These algorithms lead to a
coupled system of recurrence relations for their move counts. The resulting
candidate sequences are then transformed into simplified and higher-order
recurrences, from which explicit closed formulas are obtained. The formulas
exhibit a periodic structure and have the same exponential order of growth,
strictly slower than that of the classical three-peg Tower of Hanoi.

 \noindent  The main open point is the optimality of the proposed recursive algorithms.
Equivalently, one has to prove that certain canonical configurations, including
the one-move behaviour of the largest disc, are unavoidable in every shortest
solution. This difficulty is closely analogous to the structural difficulties
encountered in the Reve's puzzle and the Frame--Stewart conjecture. We therefore
formulate the optimality of the proposed algorithms as a conjecture. We also
discuss computational evidence, the number of shortest solutions, a linear
variant in which only adjacent peg moves are allowed, and the associated state
graph of the parity-constrained problem.

\end{abstract}

\textbf{Keywords:} Tower of Hanoi; parity constraints; recursive sequences;   asymptotic growth; state graph; Hanoi graph; Hamiltonian graphs.

\textbf{MSC (2020):} 05C57, 11B37, 68R05, 68W01, 05C45.

\section{Introduction}

The classical Tower of Hanoi puzzle asks for the minimum number of moves required to transfer a stack of $n$ discs from one peg to another, under the rules that only one disc may be moved at a time and that no larger disc may be placed on top of a smaller one. In the three-peg setting, the problem admits the well-known optimal solution
\begin{equation}
    h_3^n = 2^n-1,
\end{equation}
and the sequence $(h_3^n)_{n\geq 0}$ appears in the \emph{On-Line Encyclopedia of Integer Sequences} as \texttt{A000225} \cite{OEIS}. For general
background on the Tower of Hanoi and its many variations, we refer to the monograph of
Hinz, Klav\v zar, and Petr \cite{HinzMythsMaths2018}.

Although this classical version has been understood for more than a century, the four-peg Tower of Hanoi problem, also known as \emph{Reve's puzzle}, proved substantially more difficult. Its optimal solution was only confirmed in 2014 by Bousch \cite{Bousch2014}, who established the optimality of the Frame--Stewart recurrence
\begin{equation}
    h_{4}^{0}=0,\qquad\forall n\geq 1:\; h_{4}^{n}=\min\{\,2h_{4}^{k}+h_{3}^{n-k}\mid 0\le k \le n-1\,\}.
\end{equation}
For five or more pegs, the Frame--Stewart algorithm \cite{Frame1941,Stewart1941} is still conjectured to be optimal, but a complete proof remains open; see \cite[Chapter~5]{HinzMythsMaths2018} for a detailed account.

In this paper, we introduce a new variant of the four-peg Tower of Hanoi in which disc movements are subject to \emph{parity constraints}. Among the four pegs, two are \emph{neutral} and may hold discs of any label, whereas the remaining two are restricted: one may hold only even-labelled discs and the other only odd-labelled discs. This restriction creates a new interaction between the classical Hanoi
ordering rule and a parity-induced separation rule.

We investigate four natural optimization objectives associated with this parity-constrained model. The main contribution of the paper is the construction of four coupled recursive
algorithms for these objectives. The construction is based on repeatedly separating the smaller discs by parity,
moving one or two large discs, and then completing the required transfer by
ordinary three-peg Hanoi moves on one parity class. This leads to a coupled
system of recurrences for the four candidate sequences. These recurrences can be
simplified, decoupled, and solved explicitly.

The central conjecture of the paper is that these recursive algorithms are
optimal.

We call this the main optimality conjecture for the parity-constrained four-peg
Tower of Hanoi problem.

The difficulty in proving this conjecture is structural. The proposed recursive
algorithms have the largest disc move exactly once, and they require certain
canonical arrangements of the smaller discs before this move. To prove
optimality, one would have to show that such arrangements are unavoidable in
every shortest solution. This is precisely the type of issue that appears in the
classical Reve's puzzle and in the Frame--Stewart framework: an intuitively
natural recursive decomposition does not automatically imply a shortest
solution. For this reason, we present the recurrences as conjectural optimal
recurrences rather than as proved optimal formulas.

Besides the main conjecture, we derive several rigorous consequences concerning
the proposed algorithms. In particular, we obtain higher-order recurrences and
closed formulas for the candidate sequences. 
 
These formulas reveal a periodic structure modulo \(6\) and show that the four
candidate sequences have the same exponential order of growth. We also examine
the number of shortest solutions suggested by computational experiments, discuss
a linear variant in which only adjacent moves are allowed, and introduce the
state graph associated with the parity-constrained problem.

The paper is organized as follows. Section~\ref{sec:ProblemDescription} defines the parity-constrained four-peg Tower of Hanoi problem and introduces the four transfer objectives considered throughout the paper. Section~\ref{sec:algorithms} presents the four recursive algorithms, derives the coupled recurrence system for their candidate move counts, and records the corresponding canonical decompositions. Section~\ref{sec:conjecture} formulates the main optimality conjecture and discusses the structural obstacle to proving it, namely the difficulty of showing that the canonical intermediate configurations are unavoidable in shortest solutions. Section~\ref{sec:DerivedRecurrences} derives higher-order recurrences and explicit closed formulas for the candidate sequences, including an alternative decoupled formulation. Section~\ref{sec:AsymptoticGrowth} studies the asymptotic growth of these sequences and compares them with the classical three-peg and four-peg Hanoi sequences. Section~\ref{sec:NumberofOptimalSolutions} records computational observations and conjectural formulas for the number of shortest solutions. Section~\ref{sec:LinearVariant} studies a linear variant of the problem in which only adjacent moves are allowed. Section~\ref{sec:AssociatedGraph} introduces the associated state graph and investigates several of its structural and combinatorial properties. Finally, Section~\ref{sec:Conclusion} summarizes the results and discusses possible directions for future research.

\section{Problem Description}
\label{sec:ProblemDescription}

Fix an integer $n \geq 1$. Consider four pegs partitioned as follows: $P = \{N_{1}, N_{2}\} \cup \{O, E\}$,  where $N_{1}$ and $N_{2}$ are \emph{neutral} pegs that may hold discs of any label, $O$ is the \emph{odd} peg that may only hold discs with odd labels, and $E$ is the \emph{even} peg that may only hold discs with even labels. The discs are labeled from top to bottom as $[n] = \{1,2,\dots,n\}$,  with disc $1$ being the smallest.

A \emph{state} is a placement of all $n$ discs on the four pegs such that:
\begin{enumerate}
    \item[\textbf{(i)}] the discs on every peg form a strictly increasing sequence from top to bottom (no larger disc lies on a smaller one);
    \item[\textbf{(ii)}] every disc occupies a peg compatible with its parity.
\end{enumerate}

For $k \geq 1$, define $[k] = \{1,2,\dots,k\}$, $[k]_{0} = \{d \in [k] \mid d \text{ even}\}$, and $[k]_{1} = \{d \in [k] \mid d \text{ odd}\}$.
 
Any feasible state can then be expressed as an ordered $4$-partition of $[n]$: $S = (N_{1}, E, O, N_{2})$, where each component denotes the (stacked) set of discs assigned to the corresponding peg. This representation is a parity-restricted adaptation of the classical encoding used in~\cite{Rittaud01082023}.

We denote the \emph{allowed parity peg}  for disc $d\in[n]$ by     $p(d)\in\{E,O\}$, that is,
\[
p(d)=
\begin{cases}
E,&  d \text{ even},\\
O,&   d \text{ odd},
\end{cases}
\]
and by $\overline{p}(d)$ the \emph{forbidden parity peg} of disc $d$, that is, $\overline{p}(d)\in \{E,O\}\setminus\{p(d)\}$.

Given a state $S$ and a disc $d \in [n]$ currently on peg $p \in P$, a move $p \to q$, with $q \in P$,  is \emph{legal} if and only if:
\begin{enumerate}
    \item  (\emph{classical rule}): disc $d$ is the topmost disc on $p$, and $q$ is either empty or its top disc is larger than $d$;
    \item  (\emph{parity rule}): both $p$ and $q$ permit the parity of $d$, that is,
    \[
    \text{$d$ odd} \Rightarrow p,q \in \{N_{1},N_{2},O\}, \qquad
    \text{$d$ even} \Rightarrow p,q \in \{N_{1},N_{2},E\}.
    \]
\end{enumerate}

Starting from the \emph{initial state} $S_{0} = \bigl([n], \varnothing, \varnothing, \varnothing\bigr)$,  we consider four canonical target configurations, each defining an optimal move-count function.

\begin{enumerate}
\item[\textup{\texttt{(a)}}] \textbf{Full-tower transfer.}
Move all discs from peg $N_{1}$ to peg $N_{2}$.  
The optimum number of moves is denoted by $A_{n}$:
\[
([n], \varnothing, \varnothing, \varnothing)
\;\longrightarrow\;
(\varnothing, \varnothing, \varnothing, [n]).
\]

\item[\textup{\texttt{(b)}}]  \textbf{Parity separation.}
Move all even-labelled discs to peg $E$ and all odd-labelled discs to peg $O$.  
Denote the optimum number by $B_{n}$:
\[
([n], \varnothing, \varnothing, \varnothing)
\;\longrightarrow\;
(\varnothing, [n]_{0}, [n]_{1}, \varnothing).
\]

\item[\textup{\texttt{(c)}}]  \textbf{Odd to $O$, even to $N_{2}$.}
Transfer all odd discs to peg $O$ and all even discs to peg $N_{2}$.  
Denote the optimum number by $C_{n}$:
\[
([n], \varnothing, \varnothing, \varnothing)
\;\longrightarrow\;
(\varnothing, \varnothing, [n]_{1}, [n]_{0}).
\]

\item[\textup{\texttt{(d)}}]  \textbf{Odd to $N_{2}$, even to $E$.}
Transfer all odd discs to peg $N_{2}$ and all even discs to peg $E$.  
Denote the optimum number by $D_{n}$:
\[
([n], \varnothing, \varnothing, \varnothing)
\;\longrightarrow\;
(\varnothing, [n]_{0}, \varnothing, [n]_{1}).
\]
\end{enumerate}

\begin{figure}[ht]
    \centering

\tikzset{every picture/.style={line width=0.75pt}} %set default line width to 0.75pt        
\resizebox{\columnwidth}{!}{
\begin{tikzpicture}[x=0.75pt,y=0.75pt,yscale=-1,xscale=1]
%uncomment if require: \path (0,483); %set diagram left start at 0, and has height of 483

%Straight Lines [id:da0857244612557988] 
\draw [line width=1.5]    (0,290) -- (160,290) ;
%Straight Lines [id:da013143480019637277] 
\draw [line width=1.5]    (20,290) -- (20,240) ;
%Straight Lines [id:da8544137903982172] 
\draw [line width=1.5]    (60,290) -- (60,240) ;
%Straight Lines [id:da8691525907340287] 
\draw [line width=1.5]    (100,290) -- (100,240) ;
%Straight Lines [id:da4178076930838177] 
\draw [line width=1.5]    (140,290) -- (140,240) ;
%Straight Lines [id:da4833164612400491] 
\draw [line width=1.5]    (270,92) -- (430,92) ;
%Straight Lines [id:da9596891724840657] 
\draw [line width=1.5]    (290,92) -- (290,42) ;
%Straight Lines [id:da3379108722247439] 
\draw [line width=1.5]    (330,92) -- (330,42) ;
%Straight Lines [id:da9831052238659626] 
\draw [line width=1.5]    (370,92) -- (370,42) ;
%Straight Lines [id:da6299324093534842] 
\draw [line width=1.5]    (410,92) -- (410,42) ;
%Shape: Triangle [id:dp7155766402713806] 
\draw  [fill={rgb, 255:red, 126; green, 211; blue, 33 }  ,fill opacity=1 ] (290,47) -- (307.5,92) -- (272.5,92) -- cycle ;
%Shape: Triangle [id:dp11266532111681493] 
\draw  [fill={rgb, 255:red, 126; green, 211; blue, 33 }  ,fill opacity=1 ] (140,245) -- (157.5,290) -- (122.5,290) -- cycle ;
%Straight Lines [id:da7793941915365121] 
\draw [line width=1.5]    (180,290) -- (340,290) ;
%Straight Lines [id:da9837310772461929] 
\draw [line width=1.5]    (200,290) -- (200,240) ;
%Straight Lines [id:da9253240603461261] 
\draw [line width=1.5]    (240,290) -- (240,240) ;
%Straight Lines [id:da5525190356532756] 
\draw [line width=1.5]    (280,290) -- (280,240) ;
%Straight Lines [id:da6198301311796222] 
\draw [line width=1.5]    (320,290) -- (320,240) ;
%Straight Lines [id:da4319579555256141] 
\draw [line width=1.5]    (360,290) -- (520,290) ;
%Straight Lines [id:da6172593835330293] 
\draw [line width=1.5]    (380,290) -- (380,240) ;
%Straight Lines [id:da9212440379317548] 
\draw [line width=1.5]    (420,290) -- (420,240) ;
%Straight Lines [id:da40848256519617254] 
\draw [line width=1.5]    (460,290) -- (460,240) ;
%Straight Lines [id:da3824277979483557] 
\draw [line width=1.5]    (500,290) -- (500,240) ;
%Straight Lines [id:da446005453456481] 
\draw [line width=1.5]    (540,290) -- (700,290) ;
%Straight Lines [id:da09290259484075691] 
\draw [line width=1.5]    (560,290) -- (560,240) ;
%Straight Lines [id:da10157936806393209] 
\draw [line width=1.5]    (600,290) -- (600,240) ;
%Straight Lines [id:da29180506220079705] 
\draw [line width=1.5]    (640,290) -- (640,240) ;
%Straight Lines [id:da46907939419742495] 
\draw [line width=1.5]    (680,290) -- (680,240) ;
%Shape: Triangle [id:dp3486121940785243] 
\draw  [fill={rgb, 255:red, 80; green, 227; blue, 194 }  ,fill opacity=1 ] (240,255) -- (252.5,290) -- (227.5,290) -- cycle ;
%Shape: Triangle [id:dp39544605732975047] 
\draw  [fill={rgb, 255:red, 248; green, 231; blue, 28 }  ,fill opacity=1 ] (280,255) -- (292.5,290) -- (267.5,290) -- cycle ;
%Shape: Triangle [id:dp303937023704135] 
\draw  [fill={rgb, 255:red, 80; green, 227; blue, 194 }  ,fill opacity=1 ] (500,255) -- (512.5,290) -- (487.5,290) -- cycle ;
%Shape: Triangle [id:dp2502237158880243] 
\draw  [fill={rgb, 255:red, 248; green, 231; blue, 28 }  ,fill opacity=1 ] (460,255) -- (472.5,290) -- (447.5,290) -- cycle ;
%Shape: Triangle [id:dp8858691129984944] 
\draw  [fill={rgb, 255:red, 80; green, 227; blue, 194 }  ,fill opacity=1 ] (600,255) -- (612.5,290) -- (587.5,290) -- cycle ;
%Shape: Triangle [id:dp10316774427101483] 
\draw  [fill={rgb, 255:red, 248; green, 231; blue, 28 }  ,fill opacity=1 ] (680,255) -- (692.5,290) -- (667.5,290) -- cycle ;

% Text Node
\draw (13,299.4) node [anchor=north west][inner sep=0.75pt]  [font=\scriptsize,]  {$N_{1}$};
% Text Node
\draw (133,299.4) node [anchor=north west][inner sep=0.75pt]  [font=\scriptsize,]  {$N_{2}$};
% Text Node
\draw (93,299.4) node [anchor=north west][inner sep=0.75pt]  [font=\scriptsize,]  {$O$};
% Text Node
\draw (53,299.4) node [anchor=north west][inner sep=0.75pt]  [font=\scriptsize,]  {$E$};
% Text Node
\draw (283,101.4) node [anchor=north west][inner sep=0.75pt]  [font=\scriptsize,]  {$N_{1}$};
% Text Node
\draw (403,101.4) node [anchor=north west][inner sep=0.75pt]  [font=\scriptsize,]  {$N_{2}$};
% Text Node
\draw (363,101.4) node [anchor=north west][inner sep=0.75pt]  [font=\scriptsize,]  {$O$};
% Text Node
\draw (323,101.4) node [anchor=north west][inner sep=0.75pt]  [font=\scriptsize,]  {$E$};
% Text Node
\draw (193,299.4) node [anchor=north west][inner sep=0.75pt]  [font=\scriptsize,]  {$N_{1}$};
% Text Node
\draw (313,299.4) node [anchor=north west][inner sep=0.75pt]  [font=\scriptsize,]  {$N_{2}$};
% Text Node
\draw (273,299.4) node [anchor=north west][inner sep=0.75pt]  [font=\scriptsize,]  {$O$};
% Text Node
\draw (233,299.4) node [anchor=north west][inner sep=0.75pt]  [font=\scriptsize,]  {$E$};
% Text Node
\draw (373,299.4) node [anchor=north west][inner sep=0.75pt]  [font=\scriptsize,]  {$N_{1}$};
% Text Node
\draw (493,299.4) node [anchor=north west][inner sep=0.75pt]  [font=\scriptsize,]  {$N_{2}$};
% Text Node
\draw (453,299.4) node [anchor=north west][inner sep=0.75pt]  [font=\scriptsize,]  {$O$};
% Text Node
\draw (413,299.4) node [anchor=north west][inner sep=0.75pt]  [font=\scriptsize,]  {$E$};
% Text Node
\draw (553,299.4) node [anchor=north west][inner sep=0.75pt]  [font=\scriptsize,]  {$N_{1}$};
% Text Node
\draw (673,299.4) node [anchor=north west][inner sep=0.75pt]  [font=\scriptsize,]  {$N_{2}$};
% Text Node
\draw (633,299.4) node [anchor=north west][inner sep=0.75pt]  [font=\scriptsize,]  {$O$};
% Text Node
\draw (593,299.4) node [anchor=north west][inner sep=0.75pt]  [font=\scriptsize,]  {$E$};
% Text Node
\draw (22,345.91) node [anchor=north west][inner sep=0.75pt]  [font=\footnotesize,] [align=left] {\textbf{\textup{\texttt{(a)}}} - Full-tower transfer};
% Text Node
\draw (202,345.91) node [anchor=north west][inner sep=0.75pt]  [font=\footnotesize,] [align=left] {\textbf{\textup{\texttt{(b)}}} - Parity separation};
% Text Node
\draw (372,345.91) node [anchor=north west][inner sep=0.75pt]  [font=\footnotesize,] [align=left] {\textbf{\textup{\texttt{(c)}}} - Odd to $\displaystyle O$, even to $\displaystyle N_{2}$};
% Text Node
\draw (552,345.91) node [anchor=north west][inner sep=0.75pt]  [font=\footnotesize,] [align=left] {\textbf{\textup{\texttt{(d)}}} - Odd to $\displaystyle N_{2}$ even to $\displaystyle E$};
% Text Node
\draw (312,2) node [anchor=north west][inner sep=0.75pt]  [] [align=left] {Initial state};
% Text Node
\draw (312,202) node [anchor=north west][inner sep=0.75pt]  [] [align=left] {Final state};

\end{tikzpicture}

    }
    \caption{Illustration of the four main objectives \textup{\texttt{(a)}}--\textup{\texttt{(d)}}.  
    Green: entire tower; blue: even sub-tower; yellow: odd sub-tower.}
    \label{fig:objectives}
\end{figure}
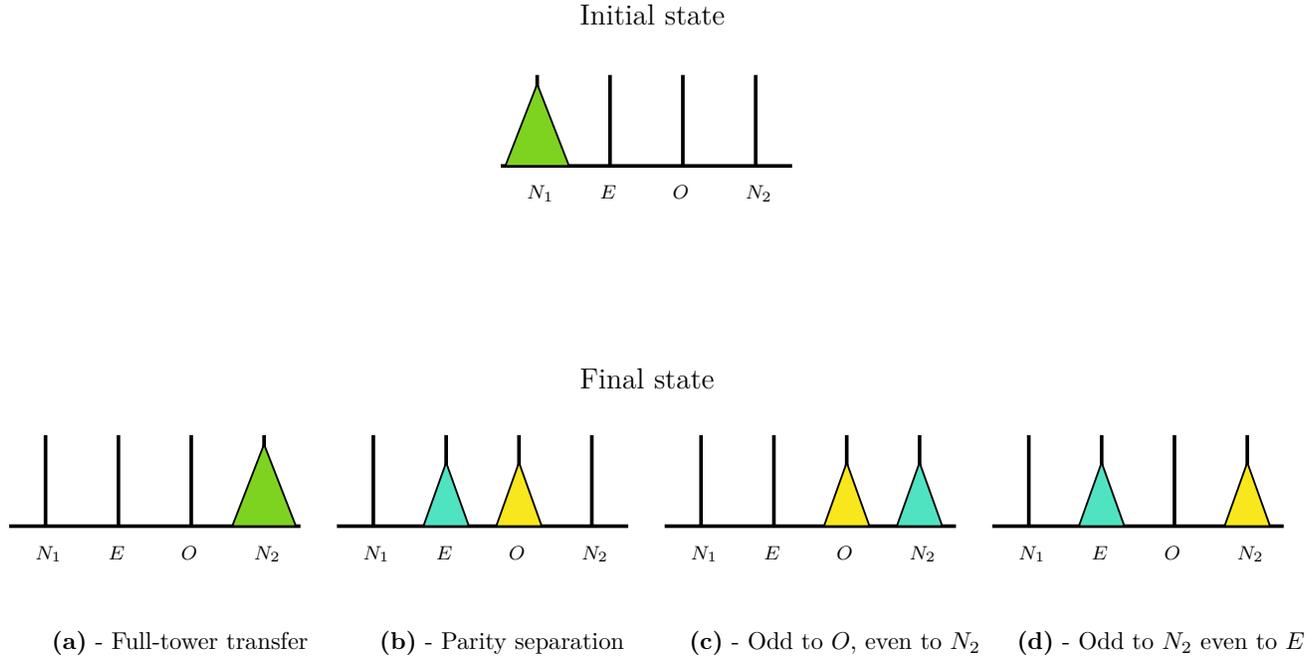

A state is called \emph{perfect} if all discs are stacked on the same peg in increasing order.
Under the given parity constraints, only two such perfect states exist for $n\geq 2$: $([n], \varnothing, \varnothing, \varnothing)$ and $(\varnothing, \varnothing, \varnothing, [n])$,  which correspond to the initial and final configurations of objective~\textup{\texttt{(a)}}.

\section{Recursive Algorithms and Candidate Move Counts}\label{sec:algorithms}

In this section, we describe four recursive algorithms for the objectives
introduced in Section~\ref{sec:ProblemDescription}. We emphasize that, at this stage, these algorithms are
not claimed to be optimal. Their optimality will be formulated as a conjecture
in the next section.

We denote by $a_n$, $b_n$, $c_n$, and $d_n$, the move counts of the recursive algorithms for objectives \texttt{(a)}, \texttt{(b)},
\texttt{(c)}, and \texttt{(d)}, respectively. Thus \(a_n,b_n,c_n,d_n\) are candidate
values for the true optima \(A_n,B_n,C_n,D_n\).

 Whenever all discs involved have the same parity, the parity-constrained
problem reduces to an ordinary three-peg Hanoi transfer on the three pegs
available to that parity. We denote such a transfer by $\mathsf{H}_3(T;X,Y,Z)$,  where \(T\) is the tower to be moved from peg \(X\) to peg \(Y\), using peg \(Z\)
as auxiliary. Since every legal move is reversible, we shall also use the reverse of any of
the recursive procedures below. For example, the reverse of the parity-separation
algorithm collects a parity-separated configuration back onto a prescribed
neutral peg.

\begin{algorithm}[ht]
\caption{Algorithm \(\mathcal A(n)\): full-tower transfer}\label{alg:A}
\begin{algorithmic}[1]
\Require All discs \(1,\ldots,n\) are stacked on \(N_1\).
\Ensure All discs \(1,\ldots,n\) are stacked on \(N_2\).
\If{\(n=1\)}
    \State Move disc \(1\) from \(N_1\) to \(N_2\).
    \State \Return
\EndIf
\State Apply \(\mathcal B(n-1)\) to the smaller discs \(1,\ldots,n-1\).
\State Move disc \(n\) from \(N_1\) to \(N_2\).
\State Apply the reverse of \(\mathcal B(n-1)\) to collect the smaller discs
       from \(E\) and \(O\) onto \(N_2\).
\end{algorithmic}
\end{algorithm}

\begin{algorithm}[ht]
\caption{Algorithm \(\mathcal B(n)\): parity separation}\label{alg:B}
\begin{algorithmic}[1]
\Require All discs \(1,\ldots,n\) are stacked on \(N_1\).
\Ensure Even discs are stacked on \(E\), and odd discs are stacked on \(O\).
 
\If{\(n=1\)}
    \State Move disc \(1\) from \(N_1\) to \(O\).
    \State \Return
\EndIf
\If{\(n\) is even}
    \State Apply \(\mathcal C(n-1)\) to the smaller discs \(1,\ldots,n-1\).
    \State Move disc \(n\) from \(N_1\) to \(E\).
    \State Apply \(\mathsf H_3([n-1]_0;N_2,E,N_1)\) to move the smaller even tower from \(N_2\) to \(E\).
\Else
    \State Apply \(\mathcal D(n-1)\) to the smaller discs \(1,\ldots,n-1\).
    \State Move disc \(n\) from \(N_1\) to \(O\).
    \State Apply \(\mathsf H_3([n-1]_1;N_2,O,N_1)\) to move the smaller odd tower from \(N_2\) to \(O\).
\EndIf
\end{algorithmic}
\end{algorithm}

\begin{algorithm}[ht]
\caption{Algorithm \(\mathcal C(n)\): odd discs to \(O\), even discs to \(N_2\)}\label{alg:C}
\begin{algorithmic}[1]
\Require All discs \(1,\ldots,n\) are stacked on \(N_1\).
\Ensure Odd discs are stacked on \(O\), and even discs are stacked on \(N_2\).
 
\If{\(n=1\)}
    \State Move disc \(1\) from \(N_1\) to \(O\).
    \State \Return
\EndIf
\If{\(n\) is even}
    \State Apply \(\mathcal B(n-1)\) to the smaller discs \(1,\ldots,n-1\).
    \State Move disc \(n\) from \(N_1\) to \(N_2\).
    \State Apply \(\mathsf H_3([n-1]_0;E,N_2,N_1)\) to move the smaller even tower from \(E\) to \(N_2\).
\Else
    \State Apply \(\mathcal B(n-2)\) to the discs \(1,\ldots,n-2\), keeping discs \(n-1\) and \(n\) fixed on \(N_1\).
    \State Move disc \(n-1\) from \(N_1\) to \(N_2\).
    \State Apply \(\mathsf H_3([n-2]_1;O,N_2,N_1)\) to move the smaller odd tower from \(O\) to \(N_2\).
    \State Move disc \(n\) from \(N_1\) to \(O\).
    \State Apply \(\mathsf H_3([n-2]_1;N_2,O,N_1)\) to move the smaller odd tower back from \(N_2\) to \(O\).
    \State Apply \(\mathsf H_3([n-2]_0;E,N_2,N_1)\) to move the smaller even tower from \(E\) to \(N_2\).
\EndIf
\end{algorithmic}
\end{algorithm}

\begin{algorithm}[ht]
\caption{Algorithm \(\mathcal D(n)\): even discs to \(E\), odd discs to \(N_2\)}\label{alg:D}
\begin{algorithmic}[1]
\Require All discs \(1,\ldots,n\) are stacked on \(N_1\).
\Ensure Even discs are stacked on \(E\), and odd discs are stacked on \(N_2\).
 
\If{\(n=1\)}
    \State Move disc \(1\) from \(N_1\) to \(N_2\).
    \State \Return
\EndIf
\If{\(n\) is odd}
    \State Apply \(\mathcal B(n-1)\) to the smaller discs \(1,\ldots,n-1\).
    \State Move disc \(n\) from \(N_1\) to \(N_2\).
    \State Apply \(\mathsf H_3([n-1]_1;O,N_2,N_1)\) to move the smaller odd tower from \(O\) to \(N_2\).
\Else
    \State Apply \(\mathcal B(n-2)\) to the discs \(1,\ldots,n-2\), keeping discs \(n-1\) and \(n\) fixed on \(N_1\).
    \State Move disc \(n-1\) from \(N_1\) to \(N_2\).
    \State Apply \(\mathsf H_3([n-2]_0;E,N_2,N_1)\) to move the smaller even tower from \(E\) to \(N_2\).
    \State Move disc \(n\) from \(N_1\) to \(E\).
    \State Apply \(\mathsf H_3([n-2]_0;N_2,E,N_1)\) to move the smaller even tower back from \(N_2\) to \(E\).
    \State Apply \(\mathsf H_3([n-2]_1;O,N_2,N_1)\) to move the smaller odd tower from \(O\) to \(N_2\).
\EndIf
\end{algorithmic}
\end{algorithm}

Algorithms~\ref{alg:A}--\ref{alg:D} immediately give the following recurrence system.

\begin{proposition}[Move counts of the recursive algorithms]\label{prop:mainrec}
The candidate move counts \(a_n,b_n,c_n,d_n\) satisfy
\begin{align}
a_n &= 2b_{n-1}+1, \label{eq:a}\\[4pt]
b_n &=
\begin{cases}
c_{n-1}+h_{3}^{\lfloor \frac{n-1}{2} \rfloor}+1, & \text{$n$ even},\\[2pt]
d_{n-1}+h_{3}^{\lfloor \frac{n-1}{2} \rfloor}+1, & \text{$n$ odd},
\end{cases}\label{eq:b}\\[4pt]
c_n &=
\begin{cases}
b_{n-1}+h_{3}^{\lfloor \frac{n-1}{2} \rfloor}+1, & \text{$n$ even},\\[2pt]
b_{n-2}+2h_{3}^{\lfloor \frac{n-1}{2} \rfloor}+h_{3}^{\lfloor \frac{n-2}{2} \rfloor}+2, & \text{$n$ odd},
\end{cases}\label{eq:c}\\[4pt]
d_n &=
\begin{cases}
b_{n-2}+3h_{3}^{\lfloor \frac{n-2}{2} \rfloor}+2, & \text{$n$ even},\\[2pt]
b_{n-1}+h_{3}^{\lfloor \frac{n-1}{2} \rfloor}+1, & \text{$n$ odd},
\end{cases}\label{eq:d}
\end{align}
with initial conditions $a_0 = b_0 = c_0 = d_0 = 0$, and $a_1 = b_1 = c_1 = d_1 = 1$.

Consequently, since these algorithms are feasible,
\[
    A_n\leq a_n,\qquad
    B_n\leq b_n,\qquad
    C_n\leq c_n,\qquad
    D_n\leq d_n .
\]
\end{proposition}

\begin{proof}
The formula for \(a_n\) follows from Algorithm~\(\mathcal A\): first the
\(n-1\) smaller discs are separated by parity in \(b_{n-1}\) moves, then disc
\(n\) is moved once from \(N_1\) to \(N_2\), and finally the smaller discs are
collected onto \(N_2\) by reversing the same parity-separation algorithm.

For \(b_n\), suppose first that \(n\) is even. Algorithm~\(\mathcal B\) first
uses \(\mathcal C(n-1)\), then moves disc \(n\) once to \(E\), and finally
moves the smaller even tower from \(N_2\) to \(E\). This last step is an
ordinary three-peg Hanoi transfer on \(\lfloor(n-1)/2\rfloor\) even discs.
Thus
\[
    b_n=c_{n-1}+h_3^{\lfloor (n-1)/2\rfloor}+1.
\]
The odd case is symmetric, using \(\mathcal D(n-1)\) and then moving the smaller
odd tower from \(N_2\) to \(O\).

For \(c_n\), if \(n\) is even, Algorithm~\(\mathcal C\) first separates the
smaller discs by parity, moves disc \(n\) to \(N_2\), and then moves the smaller
even tower from \(E\) to \(N_2\). This gives
\[
    c_n=b_{n-1}+h_3^{\lfloor (n-1)/2\rfloor}+1.
\]
If \(n\) is odd, the algorithm first separates the discs \(1,\ldots,n-2\),
moves disc \(n-1\) to \(N_2\), transfers the smaller odd tower from \(O\) to
\(N_2\), moves disc \(n\) to \(O\), transfers the smaller odd tower back to
\(O\), and finally transfers the smaller even tower from \(E\) to \(N_2\). This
gives
\[
    c_n
    =
    b_{n-2}
    +2h_3^{\lfloor (n-1)/2\rfloor}
    +h_3^{\lfloor (n-2)/2\rfloor}
    +2.
\]

The formulas for \(d_n\) are obtained analogously. If \(n\) is odd, the
algorithm separates the smaller discs, moves disc \(n\) to \(N_2\), and then
moves the smaller odd tower from \(O\) to \(N_2\). If \(n\) is even, the
algorithm separates the first \(n-2\) discs, moves disc \(n-1\) to \(N_2\),
moves the smaller even tower from \(E\) to \(N_2\), moves disc \(n\) to \(E\),
moves the smaller even tower back to \(E\), and finally moves the smaller odd
tower from \(O\) to \(N_2\). This yields the stated recurrence for \(d_n\).

The final inequalities follow because \(A_n,B_n,C_n,D_n\) are true minimum
values, while \(a_n,b_n,c_n,d_n\) are lengths of explicit feasible algorithms.
\end{proof}

Equivalently, the recursive algorithms can be described by the following
canonical decompositions.

For objective \((a)\),
\[
([n],\varnothing,\varnothing,\varnothing)
\xrightarrow{b_{n-1}}
(\{n\},[n-1]_0,[n-1]_1,\varnothing)
\xrightarrow{1}
(\varnothing,[n-1]_0,[n-1]_1,\{n\})
\xrightarrow{b_{n-1}}
(\varnothing,\varnothing,\varnothing,[n]).
\]

For objective \((b)\), if \(n\) is even, then
\[
([n],\varnothing,\varnothing,\varnothing)
\xrightarrow{c_{n-1}}
(\{n\},\varnothing,[n-1]_1,[n-1]_0)
\xrightarrow{1}
(\varnothing,\{n\},[n-1]_1,[n-1]_0)
\xrightarrow{h_3^{\lfloor (n-1)/2\rfloor}}
(\varnothing,[n]_0,[n]_1,\varnothing).
\]
If \(n\) is odd, then
\[
([n],\varnothing,\varnothing,\varnothing)
\xrightarrow{d_{n-1}}
(\{n\},[n-1]_0,\varnothing,[n-1]_1)
\xrightarrow{1}
(\varnothing,[n-1]_0,\{n\},[n-1]_1)
\xrightarrow{h_3^{\lfloor (n-1)/2\rfloor}}
(\varnothing,[n]_0,[n]_1,\varnothing).
\]

For objective \((c)\), if \(n\) is even, then
\[
([n],\varnothing,\varnothing,\varnothing)
\xrightarrow{b_{n-1}}
(\{n\},[n-1]_0,[n-1]_1,\varnothing)
\xrightarrow{1}
(\varnothing,[n-1]_0,[n-1]_1,\{n\})
\xrightarrow{h_3^{\lfloor (n-1)/2\rfloor}}
(\varnothing,\varnothing,[n]_1,[n]_0).
\]
If \(n\) is odd, then
\[
([n],\varnothing,\varnothing,\varnothing)
\xrightarrow{b_{n-2}}
(\{n,n-1\},[n-2]_0,[n-2]_1,\varnothing)
\xrightarrow{1}
(\{n\},[n-2]_0,[n-2]_1,\{n-1\})
\]
\[
\xrightarrow{h_3^{\lfloor (n-1)/2\rfloor}}
(\{n\},[n-2]_0,\varnothing,[n-2]_1\cup\{n-1\})
\xrightarrow{1}
(\varnothing,[n-2]_0,\{n\},[n-2]_1\cup\{n-1\})
\]
\[
\xrightarrow{h_3^{\lfloor (n-1)/2\rfloor}}
(\varnothing,[n-2]_0,[n]_1,\{n-1\})
\xrightarrow{h_3^{\lfloor (n-2)/2\rfloor}}
(\varnothing,\varnothing,[n]_1,[n]_0).
\]

For objective \((d)\), if \(n\) is odd, then
\[
([n],\varnothing,\varnothing,\varnothing)
\xrightarrow{b_{n-1}}
(\{n\},[n-1]_0,[n-1]_1,\varnothing)
\xrightarrow{1}
(\varnothing,[n-1]_0,[n-1]_1,\{n\})
\xrightarrow{h_3^{\lfloor (n-1)/2\rfloor}}
(\varnothing,[n]_0,\varnothing,[n]_1).
\]
If \(n\) is even, then
\[
([n],\varnothing,\varnothing,\varnothing)
\xrightarrow{b_{n-2}}
(\{n,n-1\},[n-2]_0,[n-2]_1,\varnothing)
\xrightarrow{1}
(\{n\},[n-2]_0,[n-2]_1,\{n-1\})
\]
\[
\xrightarrow{h_3^{\lfloor (n-2)/2\rfloor}}
(\{n\},\varnothing,[n-2]_1,[n-2]_0\cup\{n-1\})
\xrightarrow{1}
(\varnothing,\{n\},[n-2]_1,[n-2]_0\cup\{n-1\})
\]
\[
\xrightarrow{h_3^{\lfloor (n-2)/2\rfloor}}
(\varnothing,[n]_0,[n-2]_1,\{n-1\})
\xrightarrow{h_3^{\lfloor (n-2)/2\rfloor}}
(\varnothing,[n]_0,\varnothing,[n]_1).
\]

\begin{corollary}[Simplified coupled system]
\label{coro:simplified_coupled}
The candidate move counts \(a_n,b_n,c_n,d_n\) satisfy the following
parity-dependent relations:

\begin{align}
b_n &=
\begin{cases}
c_{n-1}+2^{\frac{n-2}{2}}, & n\ \text{even},\\[2pt]
d_{n-1}+2^{\frac{n-1}{2}}, & n\ \text{odd},
\end{cases}\label{eq:bn_simple}\\[4pt]
c_n &=
\begin{cases}
b_{n-1}+2^{\frac{n-2}{2}}, & n\ \text{even},\\[2pt]
b_{n-2}+5\cdot2^{\frac{n-3}{2}}-1, & n\ \text{odd},
\end{cases}\label{eq:cn_simple}\\[4pt]
d_n &=
\begin{cases}
b_{n-2}+3\cdot2^{\frac{n-2}{2}}-1, & n\ \text{even},\\[2pt]
b_{n-1}+2^{\frac{n-1}{2}}, & n\ \text{odd}.
\end{cases}\label{eq:dn_simple}
\end{align}
\end{corollary}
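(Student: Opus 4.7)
The plan is to derive the simplified identities directly from the coupled system of Theorem~\ref{theo:mainrec} by substituting the closed form $h_{3}^{k}=2^{k}-1$ of the classical three-peg Tower of Hanoi and then reducing the floor expressions according to the parity of $n$. No new combinatorial argument is needed; the claim is a purely algebraic corollary.

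First I would fix the parity case and evaluate the two floor terms appearing in the right-hand sides:
\[
n \text{ even}\;\Longrightarrow\;\left\lfloor\tfrac{n-1}{2}\right\rfloor=\tfrac{n-2}{2},\qquad \left\lfloor\tfrac{n-2}{2}\right\rfloor=\tfrac{n-2}{2},
\]
\[
n \text{ odd}\;\Longrightarrow\;\left\lfloor\tfrac{n-1}{2}\right\rfloor=\tfrac{n-1}{2},\qquad \left\lfloor\tfrac{n-2}{2}\right\rfloor=\tfrac{n-3}{2}.
\]
Then the identity $h_{3}^{k}+1=2^{k}$ handles the single-term cases immediately: for $b_{n}$, the relation $b_{n}=c_{n-1}+h_{3}^{\lfloor(n-1)/2\rfloor}+1$ becomes $c_{n-1}+2^{(n-2)/2}$ when $n$ is even, and similarly $d_{n-1}+2^{(n-1)/2}$ when $n$ is odd; the even-$n$ case of $c_{n}$ and the odd-$n$ case of $d_{n}$ are obtained in exactly the same way.

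The only slightly longer computations are the remaining two cases. For $c_{n}$ with $n$ odd, I would substitute into $c_{n}=b_{n-2}+2h_{3}^{(n-1)/2}+h_{3}^{(n-3)/2}+2$ and collect:
\[
2\bigl(2^{(n-1)/2}-1\bigr)+\bigl(2^{(n-3)/2}-1\bigr)+2
=4\cdot 2^{(n-3)/2}+2^{(n-3)/2}-1
=5\cdot 2^{(n-3)/2}-1,
\]
which yields the desired form. Analogously, for $d_{n}$ with $n$ even, substituting in $d_{n}=b_{n-2}+3h_{3}^{(n-2)/2}+2$ gives $3\cdot 2^{(n-2)/2}-3+2=3\cdot 2^{(n-2)/2}-1$, which is the stated expression.

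There is no real obstacle: the argument is a deterministic simplification, and the sole source of friction is bookkeeping — making sure that each floor is resolved correctly under the stated parity, and that the free constants arising from $h_{3}^{k}=2^{k}-1$ are combined into the final $-1$ terms. Once every case is checked, the equivalence with Theorem~\ref{theo:mainrec} is automatic, which establishes the corollary.
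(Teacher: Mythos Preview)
Your proposal is correct and follows exactly the same approach as the paper: substitute $h_{3}^{m}=2^{m}-1$ into the recurrences of Theorem~\ref{theo:mainrec} and simplify the floor expressions according to the parity of $n$. Your explicit case-by-case bookkeeping (in particular the verifications for $c_n$ with $n$ odd and $d_n$ with $n$ even) is slightly more detailed than the paper's, which merely states the parity-dependent floor identities and leaves the arithmetic to the reader.
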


 \begin{proof}
These identities follow from substituting the classical three-peg solution $h_{3}^{m} = 2^{m} - 1$ 
into the recurrences of Proposition~\ref{prop:mainrec} and simplifying. The simplifications rely on the fact that for all integers $n, t \geq 0$, $\left\lfloor \frac{n - 2t}{2} \right\rfloor
=
\left\lfloor \frac{n - (2t + 1)}{2} \right\rfloor$ if $n$ is odd, and $\left\lfloor \frac{n - (2t + 1)}{2} \right\rfloor
=
\left\lfloor \frac{n - (2t + 2)}{2} \right\rfloor$ otherwise. 
\end{proof}
\section{The Main Optimality Conjecture}\label{sec:conjecture}

 The largest disc is known to move exactly once in any optimal solution of the
classical Tower of Hanoi with \(p\) pegs and \(n\) discs; see
\cite[Theorem~5.18]{HinzMythsMaths2018}. More results on the number of largest
disc moves can be found in \cite{Aumann2014}. More recently, Hinz and Parisse
gave a simple and elegant proof of this fact in \cite[Lemma~2]{HinzParisse}.
For the parity-constrained problem considered here, we expect an analogous
property to hold, although the parity restrictions make the usual exchange
arguments more delicate.

\begin{conjecture}[Largest-disc one-move property]
\label{conj:largest-disc-once}
In any optimal solution of objectives
\emph{\textup{\texttt{(a)}}}--\emph{\textup{\texttt{(d)}}}, disc \(n\) moves
exactly once. More precisely:
\begin{itemize}
    \item for objective \emph{\textup{\texttt{(a)}}}, the unique move of disc
    \(n\) is from \(N_1\) to \(N_2\);
    
    \item for objective \emph{\textup{\texttt{(b)}}}, the unique move of disc
    \(n\) is from \(N_1\) to \(p(n)\);
    
    \item for objective \emph{\textup{\texttt{(c)}}}, the unique move of disc
    \(n\) is from \(N_1\) to \(N_2\) if \(n\) is even, and from \(N_1\) to \(O\)
    if \(n\) is odd;
    
    \item for objective \emph{\textup{\texttt{(d)}}}, the unique move of disc
    \(n\) is from \(N_1\) to \(N_2\) if \(n\) is odd, and from \(N_1\) to \(E\)
    if \(n\) is even.
\end{itemize}
\end{conjecture}
 
 Conjecture~\ref{conj:largest-disc-once} is a necessary structural step toward
the main optimality conjecture below. However, it is not sufficient by itself:
one must also prove that, before the unique move of disc \(n\), the smaller
discs occupy the canonical intermediate configurations prescribed by the
recursive algorithms.

\begin{conjecture}[Main optimality conjecture]\label{conj:mainconj}
For every \(n\geq 0\), the recursive algorithms described in Section~\ref{sec:algorithms} are
optimal. Equivalently, $A_n=a_n$, $B_n=b_n$, $C_n=c_n$, and  $D_n=d_n$.
\end{conjecture}

The main optimality conjecture is supported by exhaustive shortest-path
computations in the associated state graph for small values of \(n\). In all
computed cases, the true distances \(A_n,B_n,C_n,D_n\) agree with the candidate
values \(a_n,b_n,c_n,d_n\) obtained from the recursive algorithms. This
computational evidence suggests that the proposed recursive decompositions
capture the correct metric structure of the parity-constrained problem.

The main difficulty is to prove that the canonical intermediate configurations
used in Algorithms~\ref{alg:A}--\ref{alg:D} are unavoidable in every shortest solution. For example,
in the full-tower transfer, Algorithm~\ref{alg:A} first separates the \(n-1\) smaller
discs by parity, moves disc \(n\) once from \(N_1\) to \(N_2\), and then reverses
the separation procedure. To prove optimality, one would need to show that every
shortest solution can be transformed into one having this structure, or at least
that no shorter solution can avoid such a separation stage.

More generally, a proof of Conjecture~\ref{conj:mainconj} would likely
require establishing the following structural facts. First, one needs to establish Conjecture~\ref{conj:largest-disc-once}, namely
that the largest disc moves exactly once in a shortest solution for each of the
four objectives. 
Second, immediately before the move of the largest disc, the smaller discs
should occupy the canonical parity-separated configuration prescribed by the
corresponding recursive algorithm. Third, in the mixed-parity cases of
Algorithms~\ref{alg:C} and~\ref{alg:D}, the interaction between discs \(n\) and \(n-1\) should force
the additional intermediate transfers appearing in the algorithm.

These requirements are nontrivial. A local exchange argument is not enough,
because moving the largest disc may change which pegs are available as bases for
some smaller discs, while other smaller discs are forbidden from using that peg
by parity. Thus, the question is not only whether a certain move can be delayed
or advanced, but whether the whole configuration of smaller discs can be
rearranged without increasing the length of the solution. This is reminiscent of
the difficulty in the Reve's puzzle and the Frame--Stewart algorithm, where a
natural recursive strategy does not by itself imply optimality.

\begin{remark}[Non-uniqueness of shortest paths]
The shortest path between two prescribed states is not necessarily unique.
For instance, for objective \texttt{(c)} with \(n=3\), exhaustive computation in the
state graph gives $ C_3=5$. There are at least two shortest paths of length \(5\), namely
\[
1:N_1\to O,\quad
2:N_1\to N_2,\quad
1:O\to N_2,\quad
3:N_1\to O,\quad
1:N_2\to O,
\]
and
\[
1:N_1\to N_2,\quad
2:N_1\to E,\quad
3:N_1\to O,\quad
1:N_2\to O,\quad
2:E\to N_2.
\]
This shows that even when the candidate value agrees with the true optimum,
the corresponding shortest path need not be unique. More about the number of optimal solution is given in Section~\ref{sec:NumberofOptimalSolutions}.

\end{remark}

 \section{Derived Recurrences and Explicit Formulas}
\label{sec:DerivedRecurrences}

The coupled system of recursive relations in Proposition~\ref{prop:mainrec} can be algebraically
combined to yield single higher-order recurrences for each of the four candidate move sequences 
$a_{n}$, $b_{n}$, $c_{n}$, and $d_{n}$. In this section, we derive these recurrences and list their
initial values. We then provide explicit closed-form expressions for each sequence.

\begin{proposition}[Higher-order recurrences]
\label{prop:higher}
The candidate move sequences $a_n, b_n, c_n, d_n$ defined in 
Proposition~\ref{prop:mainrec} satisfy the following higher-order recurrences:
\begin{align}
&\begin{split}
&a_0=0,\; a_1=1,\;a_2=3,\\ &a_3=5,
\end{split} &\qquad\forall n\geq 4:
a_n=
\begin{cases}
a_{n-3}+5\cdot2^{\frac{n-2}{2}}-2, & n\ \text{even},\\[2pt]
a_{n-3}+7\cdot2^{\frac{n-3}{2}}-2, & n\ \text{odd};
\end{cases}\label{eq:an_high}\\
&\begin{split}
&b_0=0,\; b_1=1,\; b_2=2,
\end{split}&\qquad\forall n\geq 3:
b_n=
\begin{cases}
b_{n-3}+7\cdot2^{\frac{n-4}{2}}-1, & n\ \text{even},\\[2pt]
b_{n-3}+5\cdot2^{\frac{n-3}{2}}-1, & n\ \text{odd};
\end{cases}\label{eq:bn_high}\\
&\begin{split}
&c_0=0,\; c_1=1,\; c_2=2,\\ &c_3=5,\; c_4=6,\; c_5=13,
\end{split}&\qquad\forall n\geq 6:
c_n=
\begin{cases}
c_{n-5}+15\cdot2^{\frac{n-6}{2}}-1, & n\ \text{even},\\[2pt]
c_{n-6}+31\cdot2^{\frac{n-7}{2}}-2, & n\ \text{odd};
\end{cases}\label{eq:cn_high}\\
&\begin{split}
&d_0=0,\; d_1=1,\; d_2=2,\\ &d_3=4,\; d_4=7,\; d_5=11,
\end{split}&\qquad\forall n\geq 6:
d_n=
\begin{cases}
d_{n-6}+5\cdot2^{\frac{n-2}{2}}-2, & n\ \text{even},\\[2pt]
d_{n-5}+3\cdot2^{\frac{n-1}{2}}-1, & n\ \text{odd}.
\end{cases}\label{eq:dn_high}
\end{align}

\end{proposition}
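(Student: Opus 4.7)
The plan is to derive each higher-order recurrence by iterated substitution inside the parity-indexed simplified coupled system of Corollary~\ref{coro:simplified_coupled}, in which the classical factor $h_3^m=2^m-1$ has already been absorbed. The initial values $a_0,\dots,a_3$, $b_0,b_1,b_2$, $c_0,\dots,c_5$, $d_0,\dots,d_5$ can be computed directly from the base cases of Theorem~\ref{theo:mainrec} together with a handful of applications of the simplified formulas, so this part is pure bookkeeping.

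The main computation is organized in four layers of increasing depth. First I would prove the order-$3$ recurrence for $b_n$ by a single one-step chain: for $n$ even, $b_n=c_{n-1}+2^{(n-2)/2}$ and $c_{n-1}=b_{n-3}+5\cdot 2^{(n-4)/2}-1$ (since $n-1$ is odd), which collapses to $b_n=b_{n-3}+7\cdot 2^{(n-4)/2}-1$; the odd case goes through $d_{n-1}$ with the symmetric pattern. The $a_n$ recurrence then follows immediately from $a_n=2b_{n-1}+1$, since $a_n-a_{n-3}=2(b_{n-1}-b_{n-4})$ and the right-hand side is known from the $b$-recurrence applied at index $n-1$; note that the parity of $n-1$ is opposite to that of $n$, which is exactly why the two cases in the $a_n$ recurrence swap between coefficients $5$ and $7$.

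For $c_n$ and $d_n$ the chains are longer. In the case $n$ even for $c_n$, I would expand $c_n=b_{n-1}+2^{(n-2)/2}$, then $b_{n-1}=d_{n-2}+2^{(n-2)/2}$, then $d_{n-2}=b_{n-4}+3\cdot 2^{(n-4)/2}-1$, so that $c_n$ is expressed in terms of $b_{n-4}$. The analogous expansion on $c_{n-5}$ (whose index has opposite parity to $n$) gives $c_{n-5}=b_{n-7}+5\cdot 2^{(n-8)/2}-1$, and the already-proved $b$-recurrence yields $b_{n-4}-b_{n-7}=7\cdot 2^{(n-8)/2}-1$, after which the collected powers of two regroup into $15\cdot 2^{(n-6)/2}$, matching the claimed formula. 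Symmetric chains, one step longer where necessary, produce the order-$6$ recurrence for $c_n$ odd, the order-$6$ recurrence for $d_n$ even, and the order-$5$ recurrence for $d_n$ odd.

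The main obstacle I anticipate is not algebraic but organizational: each invocation of Corollary~\ref{coro:simplified_coupled} selects a different branch according to the parity of its argument, and a single miscounted parity shifts every exponent by a half step. The emergence of the three different recurrence orders $3,5,6$ mirrors exactly how many parity switches must be traversed before a chain returns to a term of the same type as the one being eliminated, and the constants $3,5,7,15,31$ come from elementary telescoping of sums of the form $\sum 2^{k/2}$. Once a disciplined template for the parity bookkeeping is fixed, the remaining work is the routine simplification outlined above.
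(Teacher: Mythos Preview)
Your proposal is correct and follows essentially the same strategy as the paper: derive the $b_n$ recurrence first by substituting the $c_{n-1}$/$d_{n-1}$ branches of Corollary~\ref{coro:simplified_coupled}, obtain the $a_n$ recurrence from $a_n=2b_{n-1}+1$ via $a_n-a_{n-3}=2(b_{n-1}-b_{n-4})$, and handle $c_n,d_n$ by longer substitution chains that are then closed using the already-established $b$-recurrence. Your write-up is in fact more explicit than the paper's proof, which dispatches the $c_n$ and $d_n$ cases with a one-line ``analogously'' remark.
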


\begin{proof}
We first derive the recurrence for $b_n$ using the simplified coupled system of
Corollary~\ref{coro:simplified_coupled}:
\[
b_{n}=
\begin{cases}
c_{n-1}+2^{\frac{n-2}{2}}, & \text{$n$ even},\\[6pt]
d_{n-1}+2^{\frac{n-1}{2}}, & \text{$n$ odd}.
\end{cases}
\]
Substituting the expressions of $c_{n-1}$ and $d_{n-1}$ from the same corollary and applying the parity-dependent identities for floor functions from its proof yield
\[
b_n=
\begin{cases}
b_{n-3}+7 \cdot 2^{\frac{n-4}{2}} - 1, & \text{$n$ even},\\[6pt]
b_{n-3}+5 \cdot 2^{\frac{n-3}{2}} - 1, & \text{$n$ odd},
\end{cases}
\]
which establishes~\eqref{eq:bn_high}. The initial conditions follow directly from Corollary~\ref{coro:simplified_coupled}.

Having determined $b_n$, we derive the recurrence for $a_n$ from
\[
a_n = 2b_{n-1} + 1.
\]
By applying~\eqref{eq:bn_high} to $b_{n-1}$ and noting that $n-1$ has opposite parity to $n$, we obtain
\[
a_n=
\begin{cases}
a_{n-3}+5 \cdot 2^{\frac{n-2}{2}} - 2, & \text{$n$ even},\\[6pt]
a_{n-3}+7 \cdot 2^{\frac{n-3}{2}} - 2, & \text{$n$ odd},
\end{cases}
\]
using $a_{n-3} = 2b_{n-4} + 1$ to eliminate $b_{n-4}$. This proves~\eqref{eq:an_high}.

The recurrences for $c_n$ and $d_n$ are obtained analogously by substituting from
Corollary~\ref{coro:simplified_coupled} and then eliminating shifted $b$-terms via~\eqref{eq:bn_high}.
Their initial values follow from explicit computation for $n \le 5$.

Hence, all four higher-order recurrences are established.\qedhere
\end{proof}

The first fifteen terms of each of the six sequences $h_{3}^{n}$, $h_{4}^{n}$, $a_{n}$, $b_{n}$, $c_{n}$, and $d_{n}$ are listed in Table~\ref{tab:firstvalues}. These values confirm the consistency of the derived recurrences with known results for the classical three-peg and four-peg Tower of Hanoi problems.

\begin{table}
\caption{First fifteen terms of the sequences $h_{3}^{n}$, $h_{4}^{n}$, $a_{n}$, $b_{n}$, $c_{n}$, and $d_{n}$.}
\label{tab:firstvalues}
 
\begin{tabular*}{\linewidth}{@{\extracolsep{\fill}}l|lllllllllllllll@{}}
\toprule
$n$ & 0 & 1 & 2 & 3 & 4 & 5 & 6 & 7 & 8 & 9 & 10 & 11 & 12 & 13 & 14\\ \midrule
$h_{3}^{n}$ & 0 & 1 & 3 & 7 & 15 & 31 & 63 & 127 & 255 & 511 & 1023 & 2047 & 4095 & 8191 & 16383\\
$a_n$      & 0 & 1 & 3 & 5 & 9 & 15 & 23 & 35 & 53 & 77 & 113 & 163 & 235 & 335 & 481\\
$b_n$      & 0 & 1 & 2 & 4 & 7 & 11 & 17 & 26 & 38 & 56 & 81 & 117 & 167 & 240 & 340\\
$c_n$      & 0 & 1 & 2 & 5 & 6 & 13 & 15 & 30 & 34 & 65 & 72 & 135 & 149 & 276 & 304\\
$d_n$      & 0 & 1 & 2 & 4 & 7 & 11 & 18 & 25 & 40 & 54 & 85 & 113 & 176 & 231 & 358\\
$h_{4}^{n}$ & 0 & 1 & 3 & 5 & 9 & 13 & 17 & 25 & 33 & 41 & 49 & 65 & 81 & 97 & 113\\ \bottomrule
\end{tabular*}
\end{table}

\begin{proposition}[Closed-form expressions]
\label{prop:closedform}
For all integers $n \geq 0$, let
\[
\rho(n) \coloneqq n \bmod 3,
\qquad
\theta(n) \coloneqq \frac{n - \rho(n)}{3}.
\]
Then the four sequences admit the following closed forms:
{\footnotesize

\begin{align}
\label{eq:a-closed}
a_{n}
&=
\begin{cases}
a_{\rho(n)} - 2\,\theta(n)
\;+\; 2^{\frac{n}{2}}
\!\left(
\Bigl(1 - 2^{-3\left\lfloor \frac{\theta(n)}{2}\right\rfloor}\Bigr)
\;+\;
\dfrac{20}{7}
\Bigl(1 - 2^{-3\left\lfloor \frac{\theta(n)-1}{2}\right\rfloor + 1}\Bigr)
\right),
& \text{$n$ even},\\[8pt]
a_{\rho(n)} - 2\,\theta(n)
\;+\; 2^{\frac{n+1}{2}}
\!\left(
\dfrac{5}{7}\Bigl(1 - 2^{-3\left\lfloor \frac{\theta(n)}{2}\right\rfloor}\Bigr)
\;+\;
2\Bigl(1 - 2^{-3\left\lfloor \frac{\theta(n)-1}{2}\right\rfloor + 1}\Bigr)
\right),
& \text{$n$ odd},
\end{cases}
\\[10pt]
\label{eq:b-closed}
b_{n}
&=
\begin{cases}
\dfrac{a_{\rho(n+1)} - 1}{2} - \theta(n+1)
\;+\; 2^{\frac{n}{2}}
\!\left(
\dfrac{5}{7}\Bigl(1 - 2^{-3\left\lfloor \frac{\theta(n+1)}{2}\right\rfloor}\Bigr)
\;+\;
2\Bigl(1 - 2^{-3\left\lfloor \frac{\theta(n+1)-1}{2}\right\rfloor + 1}\Bigr)
\right),
& \text{$n$ even},\\[10pt]
\dfrac{a_{\rho(n+1)} - 1}{2} - \theta(n+1)
\;+\; 2^{\frac{n-1}{2}}
\!\left(
\Bigl(1 - 2^{-3\left\lfloor \frac{\theta(n+1)}{2}\right\rfloor}\Bigr)
\;+\;
\dfrac{20}{7}\Bigl(1 - 2^{-3\left\lfloor \frac{\theta(n+1)-1}{2}\right\rfloor + 1}\Bigr)
\right),
& \text{$n$ odd},
\end{cases}
\end{align}
}

{\footnotesize

\begin{align}
\label{eq:c-closed}
c_{n}
&=
\begin{cases}
\dfrac{a_{\rho(n)} - 1}{2} - \theta(n)
\;+\; 2^{\frac{n-2}{2}}
\!\left(
2 - 2^{-3\left\lfloor \frac{\theta(n)}{2}\right\rfloor}
\;+\;
\dfrac{20}{7}
\Bigl(1 - 2^{-3\left\lfloor \frac{\theta(n)-1}{2}\right\rfloor + 1}\Bigr)
\right),
& \text{$n$ even},\\[10pt]
\dfrac{a_{\rho(n-1)} - 1}{2} - \theta(n-1)
\;+\; 2^{\frac{n-3}{2}}
\!\left(
6 - 2^{-3\left\lfloor \frac{\theta(n-1)}{2}\right\rfloor}
\;+\;
\dfrac{20}{7}
\Bigl(1 - 2^{-3\left\lfloor \frac{\theta(n-1)-1}{2}\right\rfloor + 1}\Bigr)
\right)
- 1,
& \text{$n$ odd},
\end{cases}
\\[10pt]
\label{eq:d-closed}
d_{n}
&=
\begin{cases}
\dfrac{a_{\rho(n-1)} - 1}{2} - \theta(n-1)
\;+\; 2^{\frac{n-2}{2}}
\!\left(
\dfrac{5}{7}\Bigl(1 - 2^{-3\left\lfloor \frac{\theta(n-1)}{2}\right\rfloor}\Bigr)
\;+\;
2\Bigl(1 - 2^{-3\left\lfloor \frac{\theta(n-1)-1}{2}\right\rfloor + 1}\Bigr)
+ 3
\right)
- 1,
& \text{$n$ even},\\[10pt]
\dfrac{a_{\rho(n)} - 1}{2} - \theta(n)
\;+\; 2^{\frac{n-1}{2}}
\!\left(
\dfrac{5}{7}\Bigl(1 - 2^{-3\left\lfloor \frac{\theta(n)}{2}\right\rfloor}\Bigr)
\;+\;
2\Bigl(1 - 2^{-3\left\lfloor \frac{\theta(n)-1}{2}\right\rfloor + 1}\Bigr)
+ 1
\right),
& \text{$n$ odd}.
\end{cases}
\end{align}
}
\end{proposition}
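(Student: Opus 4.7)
The plan is to first derive the closed form for $a_n$ by iterating the third-order recurrence \eqref{eq:an_high}, and then to deduce the formulas for $b_n$, $c_n$, $d_n$ by substitution into the coupled relations of Theorem~\ref{theo:mainrec} and Corollary~\ref{coro:simplified_coupled}. For $a_n$, unrolling $a_n = a_{n-3} + g(n)$ down to $a_{\rho(n)}$ yields
\[
a_n = a_{\rho(n)} + \sum_{j=0}^{\theta(n)-1} g(n-3j),
\]
where $g(m)$ denotes the parity-dependent increment of \eqref{eq:an_high}. Because the parity of $n-3j$ alternates with $j$, this sum decomposes into two interleaved sub-sums indexed by $\{j : n-3j \text{ even}\}$ and $\{j : n-3j \text{ odd}\}$. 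Within each sub-sum, consecutive terms differ by $6$ in the index, hence by a factor $2^{-3}$ in the base-$2^{1/2}$ power, so each is a geometric progression of ratio $1/8$, summable via $\sum_{i=0}^{k-1} 8^{-i} = \tfrac{8}{7}(1 - 8^{-k})$. The number of terms in each sub-sum is exactly $\lfloor \theta(n)/2 \rfloor$ or $\lfloor (\theta(n)-1)/2 \rfloor + 1$ depending on the parity of $n$, which explains the floor expressions in \eqref{eq:a-closed}. Collecting the $\theta(n)$ copies of the $-2$ constant and the contribution $a_{\rho(n)}$ from the base cases $a_0 = 0$, $a_1 = 1$, $a_2 = 3$ produces the stated formula after routine algebraic simplification.

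Once \eqref{eq:a-closed} is established, the closed form for $b_n$ follows immediately from \eqref{eq:a}, namely $a_{n+1} = 2b_n + 1$: solving for $b_n = (a_{n+1} - 1)/2$ and substituting \eqref{eq:a-closed} with $n$ replaced by $n+1$ (so parity flips and $\rho$, $\theta$ act on $n+1$) produces \eqref{eq:b-closed}. For $c_n$ and $d_n$, Corollary~\ref{coro:simplified_coupled} expresses each as $b_{n-k}$ plus an explicit power-of-two correction, for $k \in \{1,2\}$ chosen according to the parity of $n$; rewriting $b_{n-k} = (a_{n-k+1}-1)/2$ and inserting \eqref{eq:a-closed} yields \eqref{eq:c-closed} and \eqref{eq:d-closed}. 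The shifted residues $\rho(n-1)$ and $\rho(n\pm 1)$ appearing in the statement simply record which index of $a$ is invoked in each parity case, and the extra additive constants ($+3$, $+1$, $-1$) absorb the $2^{(n-k)/2}$ corrections from Corollary~\ref{coro:simplified_coupled}.

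The main obstacle is the parity bookkeeping in Step~1: the two interleaved geometric sub-series have term counts that depend jointly on the parity of $n$ and of $\theta(n)$, and capturing them uniformly through $\lfloor \theta(n)/2 \rfloor$ and $\lfloor (\theta(n)-1)/2 \rfloor$ requires a careful case split. A clean way to avoid tedious case analysis is to verify the final formula by induction on $\theta(n)$, using the compound step $a_n = a_{n-6} + g(n) + g(n-3)$ (which restores the original parity of $n$) as the inductive increment; this reduces the verification to finite algebraic checks for the six residue classes modulo~$6$, after which the derivations of \eqref{eq:b-closed}, \eqref{eq:c-closed}, \eqref{eq:d-closed} are entirely mechanical.
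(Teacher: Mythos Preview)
Your proposal is correct and follows essentially the same route as the paper, which merely states that the result ``can be proved by induction on $n$, using the higher-order recurrences in Proposition~\ref{prop:higher} and the definitions of $\rho(n)$ and $\theta(n)$.'' Your account is considerably more detailed than the paper's one-line proof: you explain the unrolling of the step-$3$ recurrence for $a_n$, the decomposition into two interleaved geometric series of ratio $1/8$, the reduction of $b_n$, $c_n$, $d_n$ to $a$-values via \eqref{eq:a} and Corollary~\ref{coro:simplified_coupled}, and the cleaner step-$6$ inductive verification; all of this is sound and amounts to a fleshed-out version of the same argument.
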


\begin{proof}
The result can be proved by induction on $n$, using the higher-order recurrences in Proposition~\ref{prop:higher} and the definitions of $\rho(n)$ and $\theta(n)$.\qedhere
\end{proof}

\subsection{An alternative formulation of the recurrences and explicit expressions}\label{subsec:alternative_formulation}

The closed-form expressions for the sequences \((a_n)\), \((b_n)\), \((c_n)\), and \((d_n)\) given in Propositions~\ref{prop:higher} and \ref{prop:closedform} may appear rather intricate and difficult to interpret. We therefore present here an alternative formulation of the recurrence system of Corollary~\ref{coro:simplified_coupled}, which makes the underlying structure of the four sequences more transparent.

\begin{lemma}[Decoupled parity formulation]\label{lem:decoupled-parity-form}
We have
\begin{align}
&a_{2n}   = 2b_{2n-1}+1,  
&&a_{2n+1} = 2b_{2n}+1, &(n\geq 1)\label{eq:a-decoupled}\\
&b_{2n}    = b_{2(n-3)}+19\cdot 2^{n-3}-2, &&b_{2n+1}  = b_{2(n-3)+1}+27\cdot 2^{n-3}-2, &(n\geq 3) \label{eq:b-decoupled}\\
&c_{2n}    = b_{2n-1}+2^{n-1}, &&c_{2n+1}  = b_{2n-1}+5\cdot 2^{n-1}-1, &(n\geq 1)\label{eq:c-decoupled}\\
&d_{2n}    = b_{2n-2}+3\cdot 2^{n-1}-1, &&d_{2n+1}  = b_{2n}+2^n. &(n\geq 1)\label{eq:d-decoupled}
\end{align}
\end{lemma}
\begin{proof}
Substituting  \(2n\) and \(2n+1\) for $n$ into \eqref{eq:a}, \eqref{eq:cn_simple}, and \eqref{eq:dn_simple} yields \eqref{eq:a-decoupled},  \eqref{eq:c-decoupled}, and \eqref{eq:d-decoupled}, respectively.  Now, we focus on \eqref{eq:b-decoupled}. Substituting   \(2n\) and \(2n+1\) for $n$ in \eqref{eq:bn_simple} yields
\begin{equation}
 b_{2n}  = c_{2n-1}+2^{n-1}, \quad b_{2n+1}  = d_{2n}+2^n = b_{2n-2}+5\cdot 2^{n-1}-1. \label{eq:b-first-step}
\end{equation}
The final identity of the above expression for \(b_{2n+1}\), is obtained   using   the even-case formula for \(d_{2n}\) in \eqref{eq:dn_simple}.

Substituting the second equation in \eqref{eq:c-decoupled} for $n - 1$ instead of $n$ in the first equation of \eqref{eq:b-first-step}, it
follows that
\begin{equation}
    b_{2n}=c_{2n-1}+2^{n-1}
      =\bigl(b_{2n-3}+5\cdot 2^{n-2}-1\bigr)+2^{n-1}
      =b_{2n-3}+7\cdot 2^{n-2}-1.\label{eq:b-even-second-step}
\end{equation}
Inserting the second equation of \eqref{eq:b-first-step} for $n - 2$ instead of $n$ in \eqref{eq:b-even-second-step}, we get
\[
b_{2n}=(b_{2n-6}+5\cdot 2^{n-3}-1)+7\cdot 2^{n-2}-1= b_{2n-6}+19\cdot 2^{n-3}-2,
\]
while inserting \eqref{eq:b-even-second-step} for $n - 1$ instead of $n$ in the second equation of \eqref{eq:b-first-step} yields
\[b_{2n+1}=b_{2n-2}+5\cdot 2^{n-1}-1
        =\bigl(b_{2n-5}+7\cdot 2^{n-3}-1\bigr)+5\cdot 2^{n-1}-1
        =b_{2n-5}+27\cdot 2^{n-3}-2.\qedhere\]
\end{proof}

Lemma \ref{lem:decoupled-parity-form} shows that once the sequence \((b_n)\) is known, the three other sequences follow immediately. In this sense, the second objective is the fundamental one. Ultimately, this analysis shows that it is sufficient to consider only the second objective \textup{\texttt{(b)}}.

By setting $\alpha_n:=b_{2n}$ and $\beta_n:=b_{2n+1}$, $n\ge 0$, we obtain the third-order linear inhomogeneous recurrences from the two equations in \eqref{eq:b-decoupled}, 
\begin{align}
\alpha_0&=0,\quad \alpha_1=2,\quad \alpha_2=7,  &\forall n\ge 3:
 \alpha_n=\alpha_{n-3}+19\cdot 2^{n-3}-2, \label{eq:alpha-third-order}\\
\beta_0&=1,\quad \beta_1=4,\quad \beta_2=11, &\forall n\ge 3:
 \beta_n=\beta_{n-3}+27\cdot 2^{n-3}-2. \label{eq:beta-third-order}
\end{align}

\begin{theorem}[Explicit formulas for \( (b_n) \)]\label{thm:b-mod6}
For every integer \(m\ge 0\), the sequence \( (b_n) \) satisfies
\begin{align}
b_{6m}   &= \frac{19}{7}(8^m-1)-2m, \label{eq:b6m}\\
b_{6m+1} &= \frac{27}{7}(8^m-1)+1-2m, \label{eq:b6m1}\\
b_{6m+2} &= \frac{38}{7}(8^m-1)+2-2m, \label{eq:b6m2}\\
b_{6m+3} &= \frac{54}{7}(8^m-1)+4-2m, \label{eq:b6m3}\\
b_{6m+4} &= \frac{76}{7}(8^m-1)+7-2m, \label{eq:b6m4}\\
b_{6m+5} &= \frac{108}{7}(8^m-1)+11-2m. \label{eq:b6m5}
\end{align}
\end{theorem}

\begin{proof}
We solve separately the three residue classes of \((\alpha_n)\) modulo \(3\), and then those of \((\beta_n)\). From \eqref{eq:alpha-third-order},
\[
\alpha_{3m}=\alpha_{3m-3}+19\cdot 2^{3m-3}-2
           =\alpha_{3(m-1)}+19\cdot 8^{m-1}-2
\qquad (m\ge 1),
\]
with initial value \(\alpha_0=0\). Iterating, we obtain
\[
\alpha_{3m}
= \sum_{j=0}^{m-1}\bigl(19\cdot 8^j-2\bigr)
= 19\sum_{j=0}^{m-1}8^j-2m
= 19\cdot \frac{8^m-1}{7}-2m.
\]
Since \(\alpha_{3m}=b_{6m}\), this gives \eqref{eq:b6m}. Again by \eqref{eq:alpha-third-order},
\[
\alpha_{3m+1}=\alpha_{3m-2}+19\cdot 2^{3m-2}-2
             =\alpha_{3(m-1)+1}+38\cdot 8^{m-1}-2
\qquad (m\ge 1),
\]
with initial value \(\alpha_1=2\). Therefore,
\[
\alpha_{3m+1}
=2+\sum_{j=0}^{m-1}\bigl(38\cdot 8^j-2\bigr)
=2+38\cdot \frac{8^m-1}{7}-2m.
\]
Since \(\alpha_{3m+1}=b_{6m+2}\), we obtain \eqref{eq:b6m2}. Similarly,
\[
\alpha_{3m+2}=\alpha_{3(m-1)+2}+76\cdot 8^{m-1}-2
\qquad (m\ge 1),
\]
with initial value \(\alpha_2=7\). Hence
\[
\alpha_{3m+2}
=7+\sum_{j=0}^{m-1}\bigl(76\cdot 8^j-2\bigr)
=7+76\cdot \frac{8^m-1}{7}-2m.
\]
Since \(\alpha_{3m+2}=b_{6m+4}\), this proves \eqref{eq:b6m4}. From \eqref{eq:beta-third-order},
\[
\beta_{3m}=\beta_{3(m-1)}+27\cdot 8^{m-1}-2
\qquad (m\ge 1),
\]
with \(\beta_0=1\). Therefore,
\[
\beta_{3m}
=1+\sum_{j=0}^{m-1}\bigl(27\cdot 8^j-2\bigr)
=1+27\cdot \frac{8^m-1}{7}-2m.
\]
Since \(\beta_{3m}=b_{6m+1}\), this proves \eqref{eq:b6m1}. Likewise,
\[
\beta_{3m+1}=\beta_{3(m-1)+1}+54\cdot 8^{m-1}-2
\qquad (m\ge 1),
\]
with \(\beta_1=4\). Thus
\[
\beta_{3m+1}
=4+\sum_{j=0}^{m-1}\bigl(54\cdot 8^j-2\bigr)
=4+54\cdot \frac{8^m-1}{7}-2m.
\]
Since \(\beta_{3m+1}=b_{6m+3}\), this proves \eqref{eq:b6m3}. Finally,
\[
\beta_{3m+2}=\beta_{3(m-1)+2}+108\cdot 8^{m-1}-2
\qquad (m\ge 1),
\]
with \(\beta_2=11\). Hence
\[
\beta_{3m+2}
=11+\sum_{j=0}^{m-1}\bigl(108\cdot 8^j-2\bigr)
=11+108\cdot \frac{8^m-1}{7}-2m.
\]
Since \(\beta_{3m+2}=b_{6m+5}\), this proves \eqref{eq:b6m5}.\qedhere
\end{proof}

By substituting \eqref{eq:b6m}--\eqref{eq:b6m5}  into \eqref{eq:a-decoupled},  \eqref{eq:c-decoupled}, and  \eqref{eq:d-decoupled} we obtain
similar solutions for the subsequences of $(a_n)$, $(c_n)$, and $(d_n)$. These formulas all have the same structure, namely 
$A\cdot(8^n-1)+B-C\cdot n$,  
where the coefficients $A$, $B$, and $C$ are different for each subsequence.

\begin{corollary}[Explicit formulas for \( (c_n) \), \( (d_n) \), and \( (a_n) \)]\label{cor:acd-mod6}
For every integer \(m\ge 0\), the remaining three sequences are given by
\begin{align}
a_{6m}   &= \frac{27}{7}(8^m-1)-4m, \\
a_{6m+1} &= \frac{38}{7}(8^m-1)+1-4m, \\
a_{6m+2} &= \frac{54}{7}(8^m-1)+3-4m, \\
a_{6m+3} &= \frac{76}{7}(8^m-1)+5-4m, \\
a_{6m+4} &= \frac{108}{7}(8^m-1)+9-4m, \\
a_{6m+5} &= \frac{152}{7}(8^m-1)+15-4m,
\end{align}
\begin{align}
c_{6m}   &= \frac{17}{7}(8^m-1)-2m, \\
c_{6m+1} &= \frac{31}{7}(8^m-1)+1-2m, \\
c_{6m+2} &= \frac{34}{7}(8^m-1)+2-2m, \\
c_{6m+3} &= \frac{62}{7}(8^m-1)+5-2m, \\
c_{6m+4} &= \frac{68}{7}(8^m-1)+6-2m, \\
c_{6m+5} &= \frac{124}{7}(8^m-1)+13-2m,
\end{align}
\begin{align}
d_{6m}   &= \frac{20}{7}(8^m-1)-2m, \\
d_{6m+1} &= \frac{26}{7}(8^m-1)+1-2m, \\
d_{6m+2} &= \frac{40}{7}(8^m-1)+2-2m, \\
d_{6m+3} &= \frac{52}{7}(8^m-1)+4-2m, \\
d_{6m+4} &= \frac{80}{7}(8^m-1)+7-2m, \\
d_{6m+5} &= \frac{104}{7}(8^m-1)+11-2m.
\end{align}
\end{corollary}

\begin{proof}
These formulas follow directly from Lemma~\ref{lem:decoupled-parity-form} and Theorem~\ref{thm:b-mod6}.

For instance,
\[
a_{6m}=2b_{6m-1}+1=2b_{6(m-1)+5}+1
\]
for \(m\ge 1\). Using \eqref{eq:b6m5}, we obtain
\[
a_{6m}
=2\left(\frac{108}{7}(8^{m-1}-1)+11-2(m-1)\right)+1
=\frac{27}{7}(8^m-1)-4m.
\]
The case \(m=0\) gives \(a_0=0\), so the formula remains valid for all \(m\ge 0\).

Likewise,
\[
c_{6m+3}=b_{6m+1}+5\cdot 2^{3m}-1
\]
by the second equation in \eqref{eq:c-decoupled}, whence
\[
c_{6m+3}
=\left(\frac{27}{7}(8^m-1)+1-2m\right)+5\cdot 8^m-1
=\frac{62}{7}(8^m-1)+5-2m.
\]
Similarly,
\[
d_{6m+4}=b_{6m+2}+3\cdot 2^{3m+1}-1
\]
by the first equation in \eqref{eq:d-decoupled}, and therefore
\[
d_{6m+4}
=\left(\frac{38}{7}(8^m-1)+2-2m\right)+6\cdot 8^m-1
=\frac{80}{7}(8^m-1)+7-2m.
\]

All remaining identities are obtained in the same way.\qedhere
\end{proof}

\section{Asymptotic Growth Analysis}
\label{sec:AsymptoticGrowth}

We now study the relative growth behavior of the four candidate move sequences 
$a_n$, $b_n$, $c_n$, and $d_n$, and compare them with the classical three-peg and four-peg Tower of Hanoi sequences $h_3^n$ and $h_4^n$, respectively.

\begin{proposition}[Comparative bounds]
\label{prop:bounds}
For all $n \geq 0$, the following hold:
\begin{itemize}
    \item For all $n\geq 0$, 
    \begin{equation}\label{eq:bounds1}
h_4^n \le A_n \le a_n \le h_3^n.
\end{equation}
\item For all $n\geq 0$, 
\begin{equation}\label{eq:bounds}
 \max\{b_n,c_n,d_n\} \le  a_n .
\end{equation}

\item Moreover, the sequences \(b_n,c_n,d_n\) remain close to one another,
and none dominates the others for all~\(n\geq 0\).
\end{itemize}
\end{proposition}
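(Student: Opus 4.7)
The plan is to treat the three assertions separately, drawing on Theorem~\ref{theo:mainrec}, Corollary~\ref{coro:simplified_coupled}, and Proposition~\ref{prop:closedform}.

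For the first inequality in~(1), the lower bound $h_{4}^{n}\le a_{n}$ is immediate: every parity-legal move sequence is a fortiori legal in the unconstrained four-peg puzzle, so the parity-constrained optimum cannot undercut $h_{4}^{n}$. For the upper bound $a_{n}\le h_{3}^{n}$, I would induct on $n$ using $a_{n}=2b_{n-1}+1$ together with a simultaneous auxiliary bound $b_{n}\le 2^{n}-1$; the auxiliary bound follows easily from the higher-order recurrence for $b_{n}$ in Proposition~\ref{prop:higher}, since the additive correction is always dominated by the doubling of $2^{n-3}-1$. Alternatively, Proposition~\ref{prop:closedform} gives $a_{n}=\Theta(2^{n/2})$, which is asymptotically much smaller than $h_{3}^{n}=2^{n}-1$, so only the first few values require a direct check against Table~\ref{tab:firstvalues}.

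For~(2), I would exploit the identity $a_{n}=2b_{n-1}+1$ and prove by simultaneous induction on $n$ the three doubling inequalities $b_{n}\le 2b_{n-1}$, $c_{n}\le 2b_{n-1}+1$, and $d_{n}\le 2b_{n-1}+1$. Substituting the even/odd expressions from Corollary~\ref{coro:simplified_coupled} reduces each inductive step to comparing a shifted $b$-term against a geometric correction of order $2^{n/2}$. The verification splits according to the parity of $n$, but each resulting inequality is elementary, and the base cases up to $n=5$ can be read from Table~\ref{tab:firstvalues}. This is where the argument is most delicate: although every individual doubling inequality is morally clear, the six-way case split imposed by parity in Corollary~\ref{coro:simplified_coupled} requires careful book-keeping of additive constants, and an off-by-one slip in any base case would invalidate the inductive propagation.

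For~(3), the closeness claim follows from Proposition~\ref{prop:closedform}: each of $b_{n}, c_{n}, d_{n}$ admits a closed form whose dominant term is of order $2^{n/2}$, with leading coefficients that depend on the residue of $n$ modulo $6$ but remain uniformly bounded away from $0$ and $\infty$. Hence the pairwise ratios $b_{n}/c_{n}$, $b_{n}/d_{n}$, and $c_{n}/d_{n}$ are all bounded, which is the quantitative content of ``remaining close.'' For the non-domination assertion, it is enough to exhibit three indices realizing different strict orderings of the three sequences; inspection of Table~\ref{tab:firstvalues} gives $c_{3}>b_{3}=d_{3}$, then $d_{6}>b_{6}>c_{6}$, and finally $c_{7}>b_{7}>d_{7}$, so none of $b_{n},c_{n},d_{n}$ majorizes the other two for all $n\geq 0$.
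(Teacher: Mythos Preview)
Your proposal is correct and largely tracks the paper's argument for parts~(1) and~(3): the lower bound $h_{4}^{n}\le a_{n}$ via ``constraints shrink the move set,'' the upper bound $a_{n}\le h_{3}^{n}$ by induction from the recurrences of Proposition~\ref{prop:higher}, and the non-domination in~(3) by exhibiting specific crossings from Table~\ref{tab:firstvalues} (the paper cites $n=6,7$; your $n=3,6,7$ works equally well, and your bounded-ratio interpretation of ``closeness'' is a reasonable formalization of a claim the paper leaves informal).

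The genuine divergence is in part~(2). The paper simply invokes the closed forms of Proposition~\ref{prop:closedform} and checks the sign of $a_{n}-x_{n}$ for $x\in\{b,c,d\}$, which is short because the heavy lifting was already absorbed into deriving those formulas. Your route is a simultaneous induction on the doubling inequalities $b_{n}\le 2b_{n-1}$, $c_{n}\le 2b_{n-1}+1$, $d_{n}\le 2b_{n-1}+1$ via Corollary~\ref{coro:simplified_coupled}. This is more elementary in spirit (no closed forms needed) but, as you correctly flag, the induction does not close in a single pass: substituting for $b_{n}$ in terms of $c_{n-1}$ or $d_{n-1}$ lands you on $b_{n-2}$ or $b_{n-3}$ rather than $b_{n-1}$, so you will need either an auxiliary lower bound of the shape $b_{n-1}\ge b_{n-2}+2^{\lfloor (n-2)/2\rfloor}$ or a strengthened inductive hypothesis to bridge that gap. (Also note your first inequality $b_{n}\le 2b_{n-1}$ is strictly stronger than the target $b_{n}\le a_{n}=2b_{n-1}+1$ and fails at $n=1$; your base-case window absorbs this, but it is worth stating the weaker form.) Both approaches are valid; the paper's buys brevity at the cost of relying on Proposition~\ref{prop:closedform}, while yours is self-contained from the recurrences but requires the six-way parity bookkeeping you anticipate.
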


\begin{proof}
The inequality $a_n \le h^{n}_3$ can be proved by induction on $n$ using the recurrences in Proposition~\ref{prop:higher}. Moreover, since the parity constraints can only reduce the set of feasible moves compared with the unconstrained four-peg problem, we also have $a_n \ge h^{n}_4$.

The inequality $\max\{b_n,c_n,d_n\} \le a_n$ can be proved by checking the sign of the difference $a_n - x_n$, for each $x \in \{b,c,d\}$, using the explicit formulas in Proposition~\ref{prop:closedform}.

The non-domination follows from inspection of Table~\ref{tab:firstvalues}; e.g., for $n=6$ we have $d_{6} > b_{6} > c_{6}$, while for $n=7$ we have $c_{7} > b_{7} > d_{7}$.\qedhere
\end{proof}

\begin{conjecture}\label{conj:bounds}
Based on the numerical evidence reported in Table~\ref{tab:firstvalues}, we conjecture that
\[
\min\{b_n, c_n, d_n\} \geq h_4^{n} \qquad \text{for all } n \geq 7.
\]
\end{conjecture}

% \begin{remark}\label{rema:bounds}
% Numerical evidence (see Table~\ref{tab:firstvalues}) suggests that
% \[
% \min\{b_n, c_n, d_n\} \geq h_4^{n} \qquad \text{for all } n \geq 7.
% \]
% A formal proof of this observation lies beyond the scope of this paper and is left as a future research direction.
% \end{remark}

% \texttt{Each sequence is plotted as discrete data points connected by straight line segments for visual clarity}

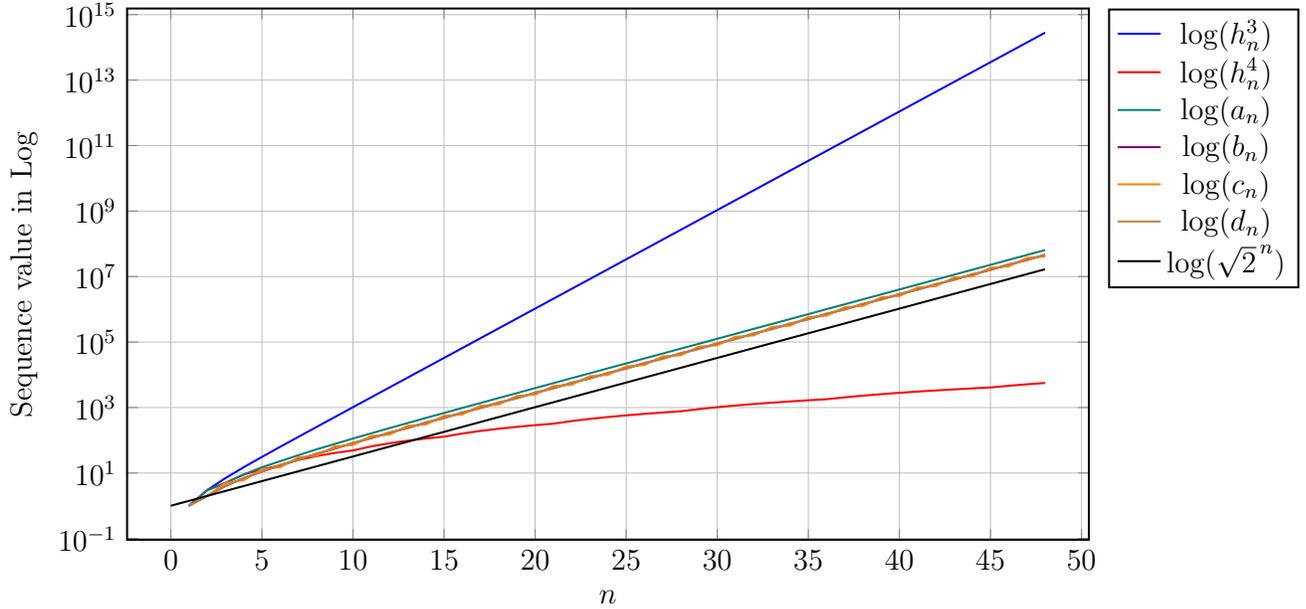
\begin{figure}[ht]
    \centering

\resizebox{\columnwidth}{!}{

\begin{tikzpicture}
\begin{axis}[
    width=15cm,
    height=9cm,
    xlabel={$n$},
    ylabel={Sequence value in Log},
    legend style={at={(1.02,1)},anchor=north west},
    grid=both,
    ymode=log,
    log basis y=10,
    ymin=0.5,
    enlargelimits=0.05,
    thick,
]

% --- H3 ---
\addplot+[mark=, mark size=1pt, blue]
table[row sep=\\] {
n val\\
0 0\\ 1 1\\ 2 3\\ 3 7\\ 4 15\\ 5 31\\ 6 63\\ 7 127\\ 8 255\\ 9 511\\
10 1023\\ 11 2047\\ 12 4095\\ 13 8191\\ 14 16383\\ 15 32767\\ 16 65535\\
17 131071\\ 18 262143\\ 19 524287\\ 20 1048575\\ 21 2097151\\ 22 4194303\\
23 8388607\\ 24 16777215\\ 25 33554431\\ 26 67108863\\ 27 134217727\\
28 268435455\\ 29 536870911\\ 30 1073741823\\ 31 2147483647\\ 32 4294967295\\
33 8589934591\\ 34 17179869183\\ 35 34359738367\\ 36 68719476735\\
37 137438953471\\ 38 274877906943\\ 39 549755813887\\ 40 1099511627775\\
41 2199023255551\\ 42 4398046511103\\ 43 8796093022207\\ 44 17592186044415\\
45 35184372088831\\ 46 70368744177663\\ 47 140737488355327\\ 48 281474976710655\\
};
\addlegendentry{$\log(h_{n}^3)$}

% --- H4 ---
\addplot+[mark=, mark size=1pt, red]
table[row sep=\\] {
n val\\
0 0\\ 1 1\\ 2 3\\ 3 5\\ 4 9\\ 5 13\\ 6 17\\ 7 25\\ 8 33\\ 9 41\\
10 49\\ 11 65\\ 12 81\\ 13 97\\ 14 113\\ 15 129\\ 16 161\\ 17 193\\
18 225\\ 19 257\\ 20 289\\ 21 321\\ 22 385\\ 23 449\\ 24 513\\ 25 577\\
26 641\\ 27 705\\ 28 769\\ 29 897\\ 30 1025\\ 31 1153\\ 32 1281\\ 33 1409\\
34 1537\\ 35 1665\\ 36 1793\\ 37 2049\\ 38 2305\\ 39 2561\\ 40 2817\\
41 3073\\ 42 3329\\ 43 3585\\ 44 3841\\ 45 4097\\ 46 4609\\ 47 5121\\ 48 5633\\
};
\addlegendentry{$\log(h_{n}^4)$}

% --- A ---
\addplot+[mark=, mark size=1.5pt, teal]
table[row sep=\\] {
n val\\
0 0\\ 1 1\\ 2 3\\ 3 5\\ 4 9\\ 5 15\\ 6 23\\ 7 35\\ 8 53\\ 9 77\\ 10 113\\ 11 163\\
12 235\\ 13 335\\ 14 481\\ 15 681\\ 16 973\\ 17 1375\\ 18 1959\\ 19 2763\\
20 3933\\ 21 5541\\ 22 7881\\ 23 11099\\ 24 15779\\ 25 22215\\ 26 31577\\
27 44449\\ 28 63173\\ 29 88919\\ 30 126367\\ 31 177859\\ 32 252757\\ 33 355741\\
34 505537\\ 35 711507\\ 36 1011099\\ 37 1423039\\ 38 2022225\\ 39 2846105\\
40 4044477\\ 41 5692239\\ 42 8088983\\ 43 11384507\\ 44 16177997\\ 45 22769045\\
46 32356025\\ 47 45538123\\ 48 64712083\\
};
\addlegendentry{$\log(a_{n})$}

% --- B ---
\addplot+[mark=, mark size=1.5pt, violet]
table[row sep=\\] {
n val\\
0 0\\ 1 1\\ 2 2\\ 3 4\\ 4 7\\ 5 11\\ 6 17\\ 7 26\\ 8 38\\ 9 56\\ 10 81\\ 11 117\\
12 167\\ 13 240\\ 14 340\\ 15 486\\ 16 687\\ 17 979\\ 18 1381\\ 19 1966\\ 20 2770\\
21 3940\\ 22 5549\\ 23 7889\\ 24 11107\\ 25 15788\\ 26 22224\\ 27 31586\\ 28 44459\\
29 63183\\ 30 88929\\ 31 126378\\ 32 177870\\ 33 252768\\ 34 355753\\ 35 505549\\
36 711519\\ 37 1011112\\ 38 1423052\\ 39 2022238\\ 40 2846119\\ 41 4044491\\
42 5692253\\ 43 8088998\\ 44 11384522\\ 45 16178012\\ 46 22769061\\ 47 32356041\\
48 45538139\\
};
\addlegendentry{$\log(b_{n})$}

% --- C ---
\addplot+[mark=, mark size=1.5pt, orange]
table[row sep=\\] {
n val\\
0 0\\ 1 1\\ 2 2\\ 3 5\\ 4 6\\ 5 13\\ 6 15\\ 7 30\\ 8 34\\ 9 65\\ 10 72\\ 11 135\\
12 149\\ 13 276\\ 14 304\\ 15 559\\ 16 614\\ 17 1125\\ 18 1235\\ 19 2258\\
20 2478\\ 21 4525\\ 22 4964\\ 23 9059\\ 24 9937\\ 25 18128\\ 26 19884\\ 27 36267\\
28 39778\\ 29 72545\\ 30 79567\\ 31 145102\\ 32 159146\\ 33 290217\\ 34 318304\\
35 580447\\ 36 636621\\ 37 1160908\\ 38 1273256\\ 39 2321831\\ 40 2546526\\
41 4643677\\ 42 5093067\\ 43 9287370\\ 44 10186150\\ 45 18574757\\ 46 20372316\\
47 37149531\\ 48 40744649\\
};
\addlegendentry{$\log(c_{n})$}

% --- D ---
\addplot+[mark=, mark size=1.5pt, brown, solid]
table[row sep=\\] {
n val\\
0 0\\ 1 1\\ 2 2\\ 3 4\\ 4 7\\ 5 11\\ 6 18\\ 7 25\\ 8 40\\ 9 54\\ 10 85\\ 11 113\\
12 176\\ 13 231\\ 14 358\\ 15 468\\ 16 723\\ 17 943\\ 18 1454\\ 19 1893\\ 20 2916\\
21 3794\\ 22 5841\\ 23 7597\\ 24 11692\\ 25 15203\\ 26 23394\\ 27 30416\\ 28 46799\\
29 60843\\ 30 93610\\ 31 121697\\ 32 187232\\ 33 243406\\ 34 374477\\ 35 486825\\
36 748968\\ 37 973663\\ 38 1497950\\ 39 1947340\\ 40 2995915\\ 41 3894695\\
42 5991846\\ 43 7789405\\ 44 11983708\\ 45 15578826\\ 46 23967433\\ 47 31157669\\
48 47934884\\
};
\addlegendentry{$\log(d_{n})$}

% --- sqrt(2)^{n} ---

\addplot+[mark=, mark size=1.5pt, black, solid]
table[row sep=\\] {
n val\\
0 1.000000\\
1 1.414214\\
2 2.000000\\
3 2.828427\\
4 4.000000\\
5 5.656854\\
6 8.000000\\
7 11.313708\\
8 16.000000\\
9 22.627417\\
10 32.000000\\
11 45.254834\\
12 64.000000\\
13 90.509668\\
14 128.000000\\
15 181.019336\\
16 256.000000\\
17 362.038672\\
18 512.000000\\
19 724.077344\\
20 1024.000000\\
21 1448.154688\\
22 2048.000000\\
23 2896.309376\\
24 4096.000000\\
25 5792.618752\\
26 8192.000000\\
27 11585.237504\\
28 16384.000000\\
29 23170.475008\\
30 32768.000000\\
31 46340.950016\\
32 65536.000000\\
33 92681.900032\\
34 131072.000000\\
35 185363.800064\\
36 262144.000000\\
37 370727.600128\\
38 524288.000000\\
39 741455.200256\\
40 1048576.000000\\
41 1482910.400512\\
42 2097152.000000\\
43 2965820.801024\\
44 4194304.000000\\
45 5931641.602048\\
46 8388608.000000\\
47 11863283.204096\\
48 16777216.000000\\
};
\addlegendentry{$\log(\sqrt{2}^{\,n})$}

\end{axis}
\end{tikzpicture}

}
     \caption{Comparison of the parity-constrained sequences $a_n$, $b_n$, $c_n$, $d_n$
    with the classical three-peg and four-peg optima
    $h_{3}^{n}$ and $h_{4}^{n}$, and the sequence $(\sqrt{2}^{n})_{n\geq 0}$.}
    \label{fig:plot}
\end{figure}
Figure~\ref{fig:plot} displays the logarithmic growth of $a_n$, $b_n$, $c_n$, and $d_n$ 
for $n \geq 7$, alongside the classical optima $h_{3}^{n}$ and $h_{4}^{n}$. 
The four parity-constrained sequences lie strictly between $h_{3}^{n}$ and $h_{4}^{n}$,
in accordance with Proposition~\ref{prop:bounds} and Conjecture~\ref{conj:bounds}.

Furthermore, the slopes of the logarithmic curves of $a_n$, $b_n$, $c_n$, and $d_n$ are nearly parallel, 
indicating that all four sequences exhibit the same exponential order of growth.
This visual trend is reinforced by plotting the auxiliary function  $y = (\sqrt{2})^{\,n}$, whose logarithmic slope closely matches those of the four sequences. 
This suggests a half-exponential asymptotic behavior of the form
\[
a_n,\, b_n,\, c_n,\, d_n = \Theta\bigl((\sqrt{2})^{\,n}\bigr).
\]

To make this observation precise, we next analyze subsequence-dependent growth ratios 
and show that all four sequences double asymptotically over two-step increments.

\begin{theorem}[Asymptotic subsequence ratios]
\label{theo:asymptoticssubsequence}
Each of the four sequences $(a_n)$, $(b_n)$, $(c_n)$, and $(d_n)$ admits two asymptotically distinct subsequences whose ratios converge to fixed constants. 

\begin{align}
\lim_{n\to \infty}\frac{a_{2n}}{a_{2n-1}} = \frac{27}{19},
&\qquad
\lim_{n\to \infty}\frac{a_{2n+1}}{a_{2n}} = \frac{38}{27},\\[6pt]
\lim_{n\to \infty}\frac{b_{2n}}{b_{2n-1}} = \frac{38}{27},
&\qquad
\lim_{n\to \infty}\frac{b_{2n+1}}{b_{2n}} = \frac{27}{19},\\[6pt]
\lim_{n\to \infty}\frac{c_{2n}}{c_{2n-1}} = \frac{34}{31},
&\qquad
\lim_{n\to \infty}\frac{c_{2n+1}}{c_{2n}} = \frac{62}{34},\\[6pt]
\lim_{n\to \infty}\frac{d_{2n}}{d_{2n-1}} = \frac{20}{13},
&\qquad
\lim_{n\to \infty}\frac{d_{2n+1}}{d_{2n}} = \frac{26}{20}.
\end{align}
\end{theorem}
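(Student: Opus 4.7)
The plan is to extract, from the closed-form expressions of Proposition~\ref{prop:closedform}, the dominant exponential term of each sequence along the two parity classes of $n$, and then compute the eight ratios directly. Every formula in that proposition has the shape
\begin{equation*}
x_n \;=\; \alpha(n) \;+\; 2^{\lambda(n)}\,\beta(n),
\end{equation*}
where $\lambda(n)$ is a linear function of $n$ (one of $n/2$, $(n\pm 1)/2$, $(n-2)/2$, $(n-3)/2$), where $\alpha(n)$ is a bounded constant plus a linear multiple of $\theta(\cdot)$ and is therefore $O(n)$, and where $\beta(n)$ is a finite combination of constants minus vanishing corrections of the form $2^{-3\lfloor\theta(\cdot)/2\rfloor}$.

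First I would observe that as $n\to\infty$ every such correction tends to $0$, so along each parity class $\beta(n)$ converges to an explicit rational constant, while $\alpha(n) = O(n) = o(2^{\lambda(n)})$ is asymptotically negligible. Reading the limits off directly from Proposition~\ref{prop:closedform}, I would obtain $\beta(n)\to 1+20/7 = 27/7$ for $a_n$ with $n$ even and $\beta(n)\to 5/7+2 = 19/7$ for $a_n$ with $n$ odd, symmetrically $19/7$ and $27/7$ for $b_n$, and analogous rational limits $34/7$, $62/7$ for $c_n$ and $40/7$, $26/7$ for $d_n$. Multiplying by the appropriate $2^{\lambda(n)}$ then yields, for each sequence, asymptotics of the form $x_{2k}\sim C_x^{\mathrm e}\,2^k$ and $x_{2k+1}\sim C_x^{\mathrm o}\,2^k$ with explicit rational constants $C_x^{\mathrm e}, C_x^{\mathrm o}$.

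Finally, the two limits satisfy $\lim_k x_{2k}/x_{2k-1} = 2 C_x^{\mathrm e}/C_x^{\mathrm o}$ and $\lim_k x_{2k+1}/x_{2k} = C_x^{\mathrm o}/C_x^{\mathrm e}$, and a direct check against these normalizations confirms the stated values $27/19$, $38/27$, $34/31$, $62/34$, $20/13$, $26/20$. The main obstacle is purely bookkeeping: since the exponent $\lambda(n)$ differs between the even and odd branches (and between the four sequences), one must carefully renormalize both asymptotics to a common power of $2$ (here $2^k$) before taking the ratio. Once this is done, every limit reduces to elementary arithmetic.
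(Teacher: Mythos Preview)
Your proposal is correct and follows essentially the same approach as the paper's own proof sketch: extract the dominant exponential term from the closed-form expressions of Proposition~\ref{prop:closedform} along each parity class, observe that the subdominant corrections are $o(2^{\lambda(n)})$, and then compute the ratios. If anything, you supply more detail than the paper does, explicitly identifying the limiting constants $27/7$, $19/7$, $34/7$, $62/7$, $40/7$, $26/7$ and the renormalization needed to compare the two parity branches.
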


\begin{proof}
The limits follow by expressing each sequence using the closed-form expressions in Proposition~\ref{prop:closedform} and applying dominant-term asymptotics as $k \to \infty$. For instance, for $a_n$ the leading exponential behavior alternates with parity, giving the stated ratios. The remaining limits are obtained similarly.

These limits can also be obtained from   the mod-\(6\) explicit formulas of Theorem~\ref{thm:b-mod6} and Corollary~\ref{cor:acd-mod6}. For instance, 
\[
\lim_{n\to\infty}\frac{b_{6n}}{b_{6n-1}}
=
\lim_{n\to\infty}
\frac{\frac{19}{7}(8^n-1)-2n}
{\frac{108}{7}(8^{n-1}-1)+11-2(n-1)}
=
\frac{19\cdot 8}{108}
=
\frac{38}{27}.
\]
Similarly,
\[
\lim_{n\to\infty}\frac{b_{6n+2}}{b_{6n+1}}
=
\lim_{n\to\infty}
\frac{\frac{38}{7}(8^n-1)+2-2n}
{\frac{27}{7}(8^n-1)+1-2n}
=
\frac{38}{27},
\]
and
\[
\lim_{n\to\infty}\frac{b_{6n+4}}{b_{6n+3}}
=
\lim_{n\to\infty}
\frac{\frac{76}{7}(8^n-1)+7-2n}
{\frac{54}{7}(8^n-1)+4-2n}
=
\frac{76}{54}
=
\frac{38}{27}.
\]
Therefore,
\[
\lim_{n\to\infty}\frac{b_{2n}}{b_{2n-1}}=\frac{38}{27}.
\]

Likewise,
\[
\lim_{n\to\infty}\frac{b_{6n+1}}{b_{6n}}
=
\lim_{n\to\infty}
\frac{\frac{27}{7}(8^n-1)+1-2n}
{\frac{19}{7}(8^n-1)-2n}
=
\frac{27}{19},
\]
\[
\lim_{n\to\infty}\frac{b_{6n+3}}{b_{6n+2}}
=
\lim_{n\to\infty}
\frac{\frac{54}{7}(8^n-1)+4-2n}
{\frac{38}{7}(8^n-1)+2-2n}
=
\frac{54}{38}
=
\frac{27}{19},
\]
and
\[
\lim_{n\to\infty}\frac{b_{6n+5}}{b_{6n+4}}
=
\lim_{n\to\infty}
\frac{\frac{108}{7}(8^n-1)+11-2n}
{\frac{76}{7}(8^n-1)+7-2n}
=
\frac{108}{76}
=
\frac{27}{19}.
\]
Thus,
\[
\lim_{n\to\infty}\frac{b_{2n+1}}{b_{2n}}=\frac{27}{19}.
\]

The corresponding limits for \( (a_n) \), \( (c_n) \), and \( (d_n) \) are obtained in exactly the same way from their respective mod-\(6\) formulas.\qedhere

\end{proof}

\begin{proposition}[Two-step growth behavior]
\label{prop:ratios2}
For each sequence $x_n \in \{a_n, b_n, c_n, d_n\}$, the two-step growth ratio satisfies
\begin{equation}
    \lim_{n\to \infty}\frac{x_n}{x_{n-2}}= 2.
\end{equation}
\end{proposition}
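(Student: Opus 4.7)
The plan is to reduce the two-step ratio to a product of two consecutive one-step ratios already controlled by Theorem~\ref{theo:asymptoticssubsequence}. For every $x \in \{a,b,c,d\}$ and every $n\geq 2$, I would write
\[
\frac{x_n}{x_{n-2}} \;=\; \frac{x_n}{x_{n-1}}\cdot\frac{x_{n-1}}{x_{n-2}},
\]
and handle the two parities of $n$ separately. When $n=2k$ the right-hand side is $(x_{2k}/x_{2k-1})(x_{2k-1}/x_{2k-2})$, and when $n=2k+1$ it is $(x_{2k+1}/x_{2k})(x_{2k}/x_{2k-1})$. In both situations, each factor is one of the subsequence ratios whose limit is prescribed by Theorem~\ref{theo:asymptoticssubsequence}, so the product rule for limits yields convergence of $x_n/x_{n-2}$ along each parity subsequence.

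The core verification is then purely arithmetic: one must check that the two parity limits coincide and equal $2$ for each of the four sequences. For $a_n$ the product is $(27/19)(38/27) = 38/19 = 2$, and symmetrically in the reversed order; the same cancellation gives $2$ in the $b$-case. For $c_n$ one obtains $(34/31)(62/34) = 62/31 = 2$, and for $d_n$ one gets $(20/13)(26/20) = 26/13 = 2$. Since each parity subsequence of $x_n/x_{n-2}$ tends to the same limit $2$, the full sequence does as well.

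An alternative route, which I would only use if the previous theorem had not already been established, would rely directly on the closed-form expressions of Proposition~\ref{prop:closedform}: isolate the dominant contribution $2^{n/2}$ (respectively $2^{(n\pm 1)/2}$), observe that the remaining summands are either affine in $n$ or of order $2^{-n/2}$, and then form the ratio. This yields the same constant $2$ but requires a little more bookkeeping across the residues $n \bmod 6$ appearing in the explicit formulas. I do not anticipate a substantive obstacle in either approach, since the crucial arithmetic cancellation is already built into the fixed ratios of Theorem~\ref{theo:asymptoticssubsequence}; the only subtlety is the mild one of confirming that the even-$n$ and odd-$n$ subsequences of $x_n/x_{n-2}$ share the same limit, which is exactly what the computations above record.
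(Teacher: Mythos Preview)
Your proposal is correct and follows essentially the same approach as the paper: factor the two-step ratio as a product of consecutive one-step ratios, invoke Theorem~\ref{theo:asymptoticssubsequence} for each factor along the two parity subsequences, and verify that the products equal~$2$. Your treatment is in fact slightly more explicit than the paper's sketch, since you carry out the arithmetic cancellations for all four sequences and note that the even- and odd-indexed subsequences of $x_n/x_{n-2}$ share the common limit.
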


\begin{proof}
From Theorem~\ref{theo:asymptoticssubsequence}, each sequence has two parity-dependent subsequences whose successive ratios converge to distinct constants $\alpha > 1$ and $\beta > 1$. Multiplying these constants shows
\[
\frac{x_{n}}{x_{n-2}} \sim \alpha \cdot \beta = 2.
\]
The same behavior follows directly from the leading exponential terms in the closed forms of Proposition~\ref{prop:closedform}.\qedhere
\end{proof}
 
\begin{lemma}[Asymptotic consequence]\label{lem:asymptotic-mod6}
For each sequence \(x_n\in\{a_n,b_n,c_n,d_n\}\) and each residue \(r\in\{0,1,\dots,5\}\), one has
\begin{equation}
    \lim_{m\to\infty}\frac{x_{6(m+1)+r}}{x_{6m+r}}=8.
\end{equation}
\end{lemma}

\begin{proof}
Each formula in Corollary~\ref{cor:acd-mod6} and Theorem~\ref{thm:b-mod6} has the form
\begin{equation}
    x_{6m+r}=A_r(8^m-1)+B_r-C_r m
\end{equation}
for suitable constants \(A_r>0\), \(B_r\), and \(C_r\). Therefore,
\[
\frac{x_{6(m+1)+r}}{x_{6m+r}}
=
\frac{A_r(8^{m+1}-1)+B_r-C_r(m+1)}
     {A_r(8^{m}-1)+B_r-C_r m}
\longrightarrow 8
\qquad (m\to\infty).\qedhere
\]
\end{proof}

\begin{theorem}[Half-exponential asymptotic growth]
\label{theo:asymptotic_growth}
\sloppy For each of the four sequences $x_n \in \{a_n, b_n, c_n, d_n\}$, we have

\begin{equation}
    x_n = \Theta\bigl((\sqrt{2})^{\,n}\bigr).
\end{equation}

\end{theorem}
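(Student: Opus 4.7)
The plan is to read off the dominant term in each of the eight parity-cases of Proposition~\ref{prop:closedform} and check that it is a strictly positive multiple of $(\sqrt{2})^n$. Fix a sequence $x_n \in \{a_n,b_n,c_n,d_n\}$ and split on the parity of $n$. In every one of the eight cases the closed form can be rewritten in the shape
\[
x_n \;=\; L(n) \;+\; 2^{\lfloor n/2 \rfloor}\, F(n),
\]
where $L(n)$ collects the linear contributions (the terms $\frac{a_{\rho(\cdot)}-1}{2} - \theta(\cdot)$ and additive constants), and $F(n)$ is built from explicit rational constants together with vanishing quantities of the form $2^{-3\lfloor \theta(n)/2 \rfloor}$ and $2^{-3\lfloor (\theta(n)-1)/2 \rfloor + 1}$.

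Because $\theta(n) = (n-\rho(n))/3 \to \infty$, both families of powers tend to $0$, so $F(n)$ converges to an explicit positive rational $F_\infty^{(\mathrm{par})}$ depending only on the parity of $n$ and on which of the four sequences is considered. For instance, $F_\infty = 1 + 20/7 = 27/7$ in the even case of $a_n$, and $F_\infty = \sqrt{2}\,(5/7 + 2) = 19\sqrt{2}/7$ in the odd case (after absorbing the $\sqrt{2}$ that arises from rewriting $2^{(n+1)/2} = \sqrt{2}\cdot 2^{\lfloor n/2 \rfloor}$); the analogous computation yields an explicit positive rational, possibly carrying a factor $\sqrt{2}$, in each of the remaining six cases. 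Since $L(n) = O(n)$ is dominated by $2^{\lfloor n/2 \rfloor} = \Theta((\sqrt{2})^n)$, we conclude
\[
x_n \;=\; F_\infty^{(\mathrm{par})} \cdot (\sqrt{2})^n \,\bigl(1 + o(1)\bigr),
\]
so in particular $c_1 (\sqrt{2})^n \le x_n \le c_2 (\sqrt{2})^n$ for all $n$ sufficiently large and suitable constants $0 < c_1 \le c_2$, which is exactly $x_n = \Theta\bigl((\sqrt{2})^n\bigr)$. The finite range of small $n$ is handled by inspection of the initial values, so no uniformity issues arise.

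The only substantive point is verifying that each of the eight leading constants $F_\infty^{(\mathrm{par})}$ is strictly positive, so that the $\Omega$-half of the $\Theta$-bound is not lost; this is routine because each such constant is a sum of positive rationals (possibly times $\sqrt{2}$) read off from Proposition~\ref{prop:closedform}. A shorter, coefficient-free alternative goes via Proposition~\ref{prop:ratios2}: the two-step ratio $x_n/x_{n-2} \to 2$ and the positivity $x_n \ge 1$ for $n \ge 1$ imply $x_n^{1/n} \to \sqrt{2}$, and combining this with Theorem~\ref{theo:asymptoticssubsequence} — whose parity-subsequence ratios are finite, positive, and multiply to $2$ — promotes the rate statement to the full $\Theta$-bound on each parity class, which reassembles to $x_n = \Theta((\sqrt{2})^n)$.
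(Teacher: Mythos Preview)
Your primary argument is correct and, in fact, cleaner than the paper's own proof. The paper argues from Proposition~\ref{prop:ratios2} alone, asserting that $x_n/x_{n-2}\to 2$ implies $x_n\sim C(\sqrt{2})^n$; but that implication is not valid in general (for instance $x_n=n(\sqrt{2})^n$ satisfies $x_n/x_{n-2}\to 2$ yet fails to be $\Theta((\sqrt{2})^n)$). By instead reading off the dominant term directly from the closed forms of Proposition~\ref{prop:closedform} and verifying that each of the eight limiting coefficients is a strictly positive constant, you obtain the $\Theta$-bound without that gap. Your ``alternative'' route via Proposition~\ref{prop:ratios2} and Theorem~\ref{theo:asymptoticssubsequence} inherits the same weakness as the paper's argument, so it is the closed-form argument that actually carries the proof.

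One minor slip: for $n$ odd you write $2^{(n+1)/2}=\sqrt{2}\cdot 2^{\lfloor n/2\rfloor}$, which is off by a factor of~$\sqrt{2}$ (since $\lfloor n/2\rfloor=(n-1)/2$, the right-hand side equals $2^{n/2}$, not $2^{(n+1)/2}$). What you evidently mean is $2^{(n+1)/2}=\sqrt{2}\cdot(\sqrt{2})^n$, and your stated limiting coefficient $19\sqrt{2}/7$ for the odd branch of $a_n$ is the correct value of $\lim_{k\to\infty} a_{2k+1}/(\sqrt{2})^{2k+1}$. This does not affect the validity of the argument.
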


\begin{proof}
From Proposition~\ref{prop:ratios2}, we have 
\[
\frac{x_{n}}{x_{n-2}} \to 2,
\]
which implies an asymptotic doubling every two steps. Hence, for sufficiently large $n$,
\[
x_n \sim C \cdot (\sqrt{2})^n
\]
for some constant $C > 0$, establishing the stated order of growth.

This result can also be obtained from Lemma~\ref{lem:asymptotic-mod6}. More precisely, for each residue class \(r\in\{0,1,\dots,5\}\), the subsequence \((x_{6m+r})_{m\ge0}\) satisfies
\[
x_{6m+r}\asymp 8^m.
\]
Hence,
\[
\frac{x_{6(m+1)+r}}{x_{6m+r}} \to 8
\qquad (m\to\infty),
\]
for every fixed \(r\). Since \(8^m=(\sqrt{2})^{6m}\), we conclude that
\[
x_n=\Theta\!\left((\sqrt{2})^n\right).\qedhere
\]
\end{proof}

\section{On the Number of Optimal Solutions}\label{sec:NumberofOptimalSolutions}

In this section, we record some observations on the number of optimal solutions for the four objectives introduced earlier and formulate a conjectural description of these numbers.

For each objective $x \in \{a,b,c,d\}$ and each $n \ge 0$, let $\nu_x(n)$ denote the number of optimal solutions for objective $x$ on $n$ discs. By convention, $\nu_a(0)=\nu_b(0)=\nu_c(0)=\nu_d(0)=1$, corresponding to the empty move sequence.

Using exact shortest-path counts in the state graph (see Section~\ref{sec:AssociatedGraph}), we computed the values of $\nu_x(n)$ for all $x \in \{a,b,c,d\}$ up to $n=16$. Thus, the first $17$ terms of each sequence listed in Table~\ref{tab:number-optimal-solutions} are exact. The remaining entries in the table are obtained from the conjectured recurrences stated in Conjecture~\ref{conj:number-optimal-solutions}; these recurrences agree with all computed values up to $n=16$.

\begin{table}
 
\caption{Number of optimal solutions for the four objectives up to $n=22$. The first $17$ terms are obtained by exact computation, while the remaining terms are conjectural.}
\label{tab:number-optimal-solutions}
 \footnotesize
 \setlength{\tabcolsep}{3pt}
\begin{tabular*}{\linewidth}{@{\extracolsep{\fill}}l|lllllllllllllllll|llllll@{}}
\toprule
$n$
&0& 1 & 2 & 3 & 4 & 5 & 6 & 7 & 8 & 9 & 10 & 11
& 12 & 13 & 14 & 15 & 16 & 17 & 18 & 19 & 20 & 21 & 22 \\
\midrule
$\nu_a(n)$
& 1&1 & 1 & 1 & 1 & 9 & 9 & 4 & 81 & 36 & 36 & 324
& 324 & 144 & 2916 & 1296 & 1296 & 11664 & 11664 & 5184 & 104976 & 46656 & 46656 \\

$\nu_b(n)$
& 1&1 & 1 & 1 & 3 & 3 & 2 & 9 & 6 & 6 & 18 & 18
& 12 & 54 & 36 & 36 & 108 & 108 & 72 & 324 & 216 & 216 & 648 \\

$\nu_c(n)$
& 1&1 & 1 & 2 & 1 & 2 & 3 & 6 & 9 & 18 & 6 & 12
& 18 & 36 & 54 & 108 & 36 & 72 & 108 & 216 & 324 & 648 & 216 \\

$\nu_d(n)$
& 1&1 & 1 & 1 & 1 & 3 & 9 & 2 & 6 & 6 & 18 & 18
& 54 & 12 & 36 & 36 & 108 & 108 & 324 & 72 & 216 & 216 & 648 \\
\bottomrule
\end{tabular*}

\end{table}

The values in Table~\ref{tab:number-optimal-solutions} suggest that the numbers of optimal solutions satisfy simple multiplicative recurrences, although we do not yet have a formal proof.

\begin{conjecture}\label{conj:number-optimal-solutions}
For all $n \ge 1$, the numbers $\nu_a(n)$, $\nu_b(n)$, $\nu_c(n)$, and $\nu_d(n)$ satisfy
\begin{align}
    \nu_a(n)&=\hspace{3mm}\nu_b(n-1)^2.\\
        \nu_b(n)&=
\begin{cases}
2\,\nu_b(n-3), & \text{ $n$ even},\\ 
3\,\nu_b(n-3), & \text{ $n$ odd},
\end{cases}\\
    \nu_c(n)&=
\begin{cases}
\nu_b(n-1), &   \text{ $n$ even},\\ 
2\,\nu_b(n-2), &  \text{ $n$ odd}.
\end{cases}\\
\nu_d(n)&=
\begin{cases}
3\,\nu_b(n-2), &\text{ $n$ even},\\ 
\nu_b(n-1), & \text{ $n$ odd}.
\end{cases}
\end{align}
 
with initial values $\nu_b(0)=\nu_b(1)=\nu_b(2)=\nu_b(3)=1$, and $\nu_b(4)=3$. 
\end{conjecture}

The recurrences of Conjecture~\ref{conj:number-optimal-solutions} were inferred experimentally from exact shortest-path counts in the state graph for small values of $n$, together with the canonical  decompositions suggested by Algorithms~\ref{alg:A}--\ref{alg:D}. More precisely, exhaustive shortest-path counting yields the initial values for $0 \le n \le 16$, and from these data one observes the striking pattern
\begin{equation}
    \frac{\nu_b(n)}{\nu_b(n-3)}=
\begin{cases}
2, & \text{if $n$ even},\\[1mm]
3, & \text{if $n$ odd},
\end{cases}
\qquad (n \ge 5).
\end{equation}

Next, the canonical decomposition of objective \textup{\texttt{(a)}} suggested by  Algorithm~\ref{alg:A} consists of a \textup{\texttt{(b)}}-block, followed by the unique move of disc $n$, and then a symmetric   \textup{\texttt{(b)}}-block. This strongly suggests that the two \textup{\texttt{(b)}}-blocks can be chosen independently, which leads to $\nu_a(n)=\nu_b(n-1)^2$.

For objective \textup{\texttt{(c)}} with $n$ even, and for objective \textup{\texttt{(d)}} with $n$ odd, Algorithm~\ref{alg:C} reduces the problem to one optimal \textup{\texttt{(b)}}-block followed by a unique classical three-peg transfer. Since that final transfer is unique, the multiplicity should come entirely from the initial \textup{\texttt{(b)}}-block. This leads to $\nu_c(n)=\nu_b(n-1)$  for $n$ even, and $\nu_d(n)=\nu_b(n-1)$ for $n$ odd.

Finally, in the mixed-parity cases---namely objective \textup{\texttt{(c)}} with $n$ odd and objective \textup{\texttt{(d)}} with $n$ even---the computed data suggest an additional finite branching in the middle part of an optimal solution. Empirically, this branching appears to have size $2$ in case \textup{\texttt{(c)}} and size $3$ in case \textup{\texttt{(d)}}. This leads to the conjectural formulas $\nu_c(n)=2\,\nu_b(n-2)$ for $n$ odd, and  $\nu_d(n)=3\,\nu_b(n-2)$ for $n$ even.

Substituting these formulas into the recurrence pattern for objective \textup{\texttt{(b)}} then yields the conjectured recurrence for $\nu_b(n)$.

Assuming Conjecture~\ref{conj:number-optimal-solutions}, the sequence
\((\nu_b(n))_{n\ge 0}\) is periodic modulo \(6\) up to a multiplicative factor \(6\).  More precisely,  for every integer $m \ge 1$,   
\begin{align}
    \nu_b(6m)&=2\cdot 6^{m-1}=\texttt{A167747}(m),\\
\nu_b(6m+1)&=9\cdot 6^{m-1}=\texttt{A199413}(m-1)-1,
\end{align}

while, for every \(m\ge 0\),
\begin{align}
    \nu_b(6m+2)&=\nu_b(6m+3)=6^m=\texttt{A000400}(m),\\
\nu_b(6m+4)&=\nu_b(6m+5)=3\cdot 6^m=\texttt{A081341}(m+1).
\end{align}

The formulas for the other three sequences are then obtained by substitution.

For \(\nu_a\),  we obtain $\nu_a(1)=\nu_a(2)=1$, and for every \(m\ge 1\),
\begin{align}
    \nu_a(6m)&=9\cdot 36^{m-1},\\
\nu_a(6m+1)&=4\cdot 36^{m-1},\\
\nu_a(6m+2)&=81\cdot 36^{m-1},
\end{align}
while, for every \(m\ge 0\),
\begin{align}
    \nu_a(6m+3)=\nu_a(6m+4)&=36^m=\texttt{A009980}(m),\\
\nu_a(6m+5)&=9\cdot 36^m.
\end{align}

For \(\nu_c\), we use the parity-dependent definition above in Conjecture~\ref{conj:number-optimal-solutions}. Since the odd case involves \(\nu_b(n-2)\), it is natural to state the formulas for \(n\ge 2\). We first have $\nu_c(2)=1$, and $\nu_c(3)=2$,  and then, for every \(m\ge 1\),
\begin{align}
    \nu_c(6m)&=3\cdot 6^{m-1}=\texttt{A081341}(m),\\
\nu_c(6m+1)&=6^m=\texttt{A000400}(m),\\
\nu_c(6m+2)&=9\cdot 6^{m-1}=\texttt{A199413}(m-1)-1,\\
\nu_c(6m+3)&=3\cdot 6^m=\texttt{A081341}(m+1),
\end{align}
while, for every \(m\ge 0\),
\begin{align}
    \nu_c(6m+4)&=6^m=\texttt{A000400}(m),\\
\nu_c(6m+5)&=2\cdot 6^m=\texttt{A167747}(m+1).
\end{align}

Finally, for \(\nu_d\), we obtain $\nu_d(1)=1$, and $\nu_d(2)=3$,  and  for every \(m\ge 1\),
\begin{align}
    \nu_d(6m)&=9\cdot 6^{m-1}=\texttt{A199413}(m-1)-1,\\
\nu_d(6m+1)&=2\cdot 6^{m-1}=\texttt{A167747}(m),\\
\nu_d(6m+2)&=6^m=\texttt{A000400}(m),
\end{align}
while, for every \(m\ge 0\),
\begin{align}
    \nu_d(6m+3)&=6^m=\texttt{A000400}(m),\\
\nu_d(6m+4)=\nu_d(6m+5)&=3\cdot 6^m=\texttt{A081341}(m+1).
\end{align}

Thus, each of the four sequences admits an explicit description according to the residue class of the index modulo \(6\), with \(\nu_a\) growing like powers of \(36\), and \(\nu_b\), \(\nu_c\), and \(\nu_d\) growing like powers of \(6\).

 \section{Linear Variant} \label{sec:LinearVariant}

In the section, we consider the  linear variant of the parity-constrained four-peg Tower of Hanoi. The set of pegs is still $P$ with the same parity restrictions as in Section~\ref{sec:ProblemDescription}, but the legal moves are now required to be \emph{adjacent}. More precisely, if \(d\) is even, then its allowed moves are only $N_1 \leftrightarrow E$ and $E \leftrightarrow N_2$,  whereas if \(d\) is odd, then its allowed moves are only $N_1 \leftrightarrow O$ and $O \leftrightarrow N_2$. In particular, no disc may move directly from \(N_1\) to \(N_2\).

We denote by $a_n^{\mathrm{lin}}$, $b_n^{\mathrm{lin}}$, $c_n^{\mathrm{lin}}$, and $d_n^{\mathrm{lin}}$, the optimal move counts for objectives \textup{\texttt{(a)}}--\textup{\texttt{(d)}} in this linear setting.

\subsection{First values}

The initial values were obtained by exhaustive shortest-path computation in the corresponding
state graph; they are listed in Table~\ref{tab:linear-parity-lengths}.

\begin{table}
\caption{Optimal lengths for the linear parity-constrained variant, together with their OEIS identifications when available.}
\label{tab:linear-parity-lengths}
\footnotesize
\setlength{\tabcolsep}{4pt}
\begin{tabular*}{\linewidth}{@{\extracolsep{\fill}}l|lllllllllllllllll|l@{}}
\toprule
$n$ & 0&1 & 2 & 3 & 4 & 5 & 6 & 7 & 8 & 9 & 10 & 11 & 12 & 13 & 14 & 15 & 16& OEIS \\
\midrule
$a_n^{\mathrm{lin}}$ &0& 2 & 4 & 10 & 16 & 34 & 52 & 106 & 160 & 322 & 484 & 970 & 1456 & 2914 & 4372 & 8746 & 13120& \texttt{A117862}$(n)$   \\
$b_n^{\mathrm{lin}}$ &0& 1 & 2 & 5 & 8 & 17 & 26 & 53 & 80 & 161 & 242 & 485 & 728 & 1457 & 2186 & 4373 & 6560& \texttt{A062318}$(n)$ \\
$c_n^{\mathrm{lin}}$ &0& 1 & 3 & 6 & 12 & 21 & 39 & 66 & 120 & 201 & 363 & 606 & 1092 & 1821 & 3279 & 5466 & 9840& \texttt{A087503}$(n-1)$ \\
$d_n^{\mathrm{lin}}$ &0& 2 & 3 & 9 & 12 & 30 & 39 & 93 & 120 & 282 & 363 & 849 & 1092 & 2550 & 3279 & 7653 & 9840& \texttt{-} \\
\bottomrule
\end{tabular*}

\end{table}

Interestingly, in contrast to the non-linear variant, the linear version leads to sequences with a
much more regular behavior. In particular, the sequences, $a_n^{\mathrm{lin}}$, $b_n^{\mathrm{lin}}$, and $c_n^{\mathrm{lin}}$, coincide with registered OEIS sequences, as indicated in Table~\ref{tab:linear-parity-lengths}. The
sequence $d_n^{\mathrm{lin}}$ does not appear to be registered as a whole; however, its even
subsequence
\[
\begin{aligned}
d_{2m-1}^{\mathrm{lin}}&= 2,  9,  30,  93,  282, 849,  2550,  7653 ,\dots\\
d_{2m}^{\mathrm{lin}}&=0,3,12,39,120,363,1092,3279,\dots
\end{aligned}
\]
matches the OEIS sequences \texttt{A237930}$(m-1)+1$ and \texttt{A029858}$(m)$, respectively. Morover, according to the OEIS corresponding webpage, the sequence \texttt{A087503} seems to have no explicit combinatorial interpretation yet since no comment is mentioned in the corresponding page, thus, this could be its first natural combinatorial interpretation: up to an index shift, it gives the optimal lengths for
objective~\textup{\texttt{(c)}} in the linear parity-constrained problem.

\subsection{Closed forms}

Before deriving the formulas for \(a_n^{\mathrm{lin}}, b_n^{\mathrm{lin}}, c_n^{\mathrm{lin}}, d_n^{\mathrm{lin}}\), we recall the classical linear three-peg Tower of Hanoi on a path of length two. Let \(l_n\) denote the minimum number of moves needed to transfer a tower of \(n\) discs between the left and the right pegs, and let \(\ell_n\) denote the minimum number of moves needed to transfer it between adjacent pegs. It is well known, and easy to prove by induction, that 
\begin{align}
    l_n&=3^n-1,\label{eq:l_m}\\
    \ell_n&=\frac{3^n-1}{2}\label{eq:ell_m}.
\end{align}

We now derive exact formulas for the four sequences of the linear parity-constrained problem.

\begin{theorem}\label{theo:LinearExplicit}
For every integer \(m\ge 1\), the optimal move counts of the linear variant are given by
\begin{align}
a_{2m-1}^{\mathrm{lin}}&=4\cdot 3^{m-1}-2,   &&a_{2m}^{\mathrm{lin}} =2\cdot 3^{m}-2,\label{eq:linear_thoerem_a}\\
    b_{2m-1}^{\mathrm{lin}}&=2\cdot 3^{m-1}-1, 
&&b_{2m}^{\mathrm{lin}} =3^{m}-1,\label{eq:linear_thoerem_b}\\
c_{2m-1}^{\mathrm{lin}}&=\frac{5\cdot 3^{m-1}-3}{2},&&
c_{2m}^{\mathrm{lin}}=\frac{3^{m+1}-3}{2},\label{eq:linear_thoerem_c}\\
d_{2m-1}^{\mathrm{lin}}&=\frac{7\cdot 3^{m-1}-3}{2},&&
d_{2m}^{\mathrm{lin}}=\frac{3^{m+1}-3}{2}.\label{eq:linear_thoerem_d}
\end{align}
In particular, for all $m\ge 1$, 
\begin{equation}
d_{2m}^{\mathrm{lin}}=c_{2m}^{\mathrm{lin}}.
\end{equation}
\end{theorem}

\begin{proof}
We proceed objective by objective.

 \textbf{Objective \textup{\texttt{(b)}}.}
To reach the parity-separated state on \(n\) discs, the two largest discs \(n-1\) and \(n\) must first be exposed. This requires transferring the \(n-2\) smallest discs from \(N_1\) to \(N_2\), which costs \(a_{n-2}^{\mathrm{lin}}\) moves. Then disc \(n-1\) is moved to its parity peg and disc \(n\) is moved to its parity peg, which costs \(2\) additional moves. Finally, the \(n-2\) smallest discs must be transferred from \(N_2\) to the parity-separated configuration; by symmetry, this costs \(b_{n-2}^{\mathrm{lin}}\) moves. Hence, for all $n\ge 3$, we have 
\begin{equation}\label{eq:linearProof1}
    b_n^{\mathrm{lin}}=a_{n-2}^{\mathrm{lin}}+b_{n-2}^{\mathrm{lin}}+2
\end{equation}
Now, once the parity-separated configuration has been reached, the passage from that state to the full tower on \(N_2\) is the mirror image of the passage from the initial state to the parity-separated configuration. Therefore, for all $n\ge1$, we have 
\begin{equation}\label{eq:linearProof2}
    a_n^{\mathrm{lin}}=2\,b_n^{\mathrm{lin}}
\end{equation}
Substituting \eqref{eq:linearProof2} this into \eqref{eq:linearProof1} yields, for all $n\ge 3$, 
\begin{equation}
    b_n^{\mathrm{lin}}=3b_{n-2}^{\mathrm{lin}}+2
\end{equation}
with initial conditions $b_1^{\mathrm{lin}}=1$, and $b_2^{\mathrm{lin}}=2$.

Separating odd and even indices gives
\begin{align}
 b_{2m-1}^{\mathrm{lin}}&=3b_{2m-3}^{\mathrm{lin}}+2,\\
 b_{2m}^{\mathrm{lin}}&=3b_{2m-2}^{\mathrm{lin}}+2.
\end{align}
Solving these first-order recurrences yields \eqref{eq:linear_thoerem_b}. Hence, \eqref{eq:linear_thoerem_a} follows immediately from \eqref{eq:linear_thoerem_b} and \eqref{eq:linearProof2}.

 \textbf{Objective \textup{\texttt{(c)}}, odd case.}
Let \(n=2m-1\). First transfer the \(2m-3\) smallest discs from \(N_1\) to \(N_2\), which costs \(a_{2m-3}^{\mathrm{lin}}\). Then move disc \(2m-2\) to \(E\) and disc \(2m-1\) to \(O\), which costs \(2\) moves. Next, transfer the \(2m-3\) smallest discs from \(N_2\) to the parity-separated configuration, which costs \(b_{2m-3}^{\mathrm{lin}}\). At that point, all odd discs are already on \(O\), while the even discs form a monochromatic tower on \(E\). To complete objective \textup{\texttt{(c)}}, this even tower of \(m\) discs must be transferred from \(E\) to \(N_2\), which is an adjacent transfer in the linear three-peg setting, and therefore costs \(\ell_m\). Thus
\[
c_{2m-1}^{\mathrm{lin}}
=
a_{2m-3}^{\mathrm{lin}}
+2
+b_{2m-3}^{\mathrm{lin}}
+\ell_m.
\]
Using the formulas already established for \(a_{2m-3}^{\mathrm{lin}}\) and \(b_{2m-3}^{\mathrm{lin}}\), together with \eqref{eq:ell_m},  we obtain
\[
c_{2m-1}^{\mathrm{lin}}
=
\bigl(4\cdot 3^{m-2}-2\bigr)+2+\bigl(2\cdot 3^{m-2}-1\bigr)+\frac{3^m-1}{2}
=
\frac{5\cdot 3^{m-1}-3}{2}.
\]

 \textbf{Objective \textup{\texttt{(c)}}, even case.}
Let \(n=2m\). First solve objective \textup{\texttt{(c)}} on the \(2m-1\) smallest discs, which costs \(c_{2m-1}^{\mathrm{lin}}\). Then move disc \(2m\) from \(N_1\) to \(E\), which costs \(1\) move. In order to move disc \(2m\) from \(E\) to \(N_2\), the smaller even discs must be evacuated from \(N_2\) to \(N_1\); since the odd discs are fixed on \(O\), this is an end-to-end transfer of a monochromatic tower of \(m-1\) even discs, and therefore costs \(l_{m-1}\). After moving disc \(2m\) from \(E\) to \(N_2\), the same monochromatic transfer must be performed again to bring the smaller even discs from \(N_1\) to \(N_2\), which costs another \(l_{m-1}\). Hence
\begin{equation}
    c_{2m}^{\mathrm{lin}}
=
c_{2m-1}^{\mathrm{lin}}+2l_{m-1}+2.
\end{equation}
Using the already proved odd case of \eqref{eq:linear_thoerem_c} and \eqref{eq:l_m}, we obtain
\[
c_{2m}^{\mathrm{lin}}
=
\frac{5\cdot 3^{m-1}-3}{2}+2(3^{m-1}-1)+2
=
\frac{9\cdot 3^{m-1}-3}{2}
=
\frac{3^{m+1}-3}{2}.
\]

 \textbf{Objective \textup{\texttt{(d)}}, odd case.}
Let \(n=2m-1\). The beginning of the construction is the same as for objective \textup{\texttt{(c)}}: first transfer the \(2m-3\) smallest discs from \(N_1\) to \(N_2\), then move discs \(2m-2\) and \(2m-1\) to their parity pegs, and finally transfer the \(2m-3\) smallest discs from \(N_2\) to the parity-separated configuration. This costs $a_{2m-3}^{\mathrm{lin}}+2+b_{2m-3}^{\mathrm{lin}}$.

At that stage, the odd discs form a monochromatic tower of \(m\) discs on \(O\). To complete objective \textup{\texttt{(d)}}, this tower must be transferred from \(O\) to \(N_2\), which is an end-to-end transfer in the linear three-peg setting, and therefore costs \(l_m\). Hence
\begin{equation}
    d_{2m-1}^{\mathrm{lin}}
=
a_{2m-3}^{\mathrm{lin}}+2+b_{2m-3}^{\mathrm{lin}}+l_m.
\end{equation}
Using \eqref{eq:l_m},  we obtain
\[
d_{2m-1}^{\mathrm{lin}}
=
\bigl(4\cdot 3^{m-2}-2\bigr)+2+\bigl(2\cdot 3^{m-2}-1\bigr)+(3^m-1)
=
\frac{7\cdot 3^{m-1}-3}{2}.
\]

 \textbf{Objective \textup{\texttt{(d)}}, even case.}
For \(n=2m\), the same decomposition as in the even case of objective \textup{\texttt{(c)}} applies after exchanging the roles of the odd and even monochromatic subtowers. This yields to the even case of \eqref{eq:linear_thoerem_d}.

Collecting the four cases completes the proof.\qedhere
\end{proof}

A convenient recurrence form follows immediately.

\begin{corollary}
The four sequences satisfy the recurrences 
\begin{align}
    &a_1^{\mathrm{lin}}=2,\qquad a_2^{\mathrm{lin}}=4,\qquad \forall n\ge 3:a_n^{\mathrm{lin}}=3a_{n-2}^{\mathrm{lin}}+4,\\
   & b_1^{\mathrm{lin}}=1,\qquad b_2^{\mathrm{lin}}=2,\qquad \forall n\ge 3:b_n^{\mathrm{lin}}=3b_{n-2}^{\mathrm{lin}}+2,\\
    &c_1^{\mathrm{lin}}=1,\qquad c_2^{\mathrm{lin}}=3,\qquad \forall n\ge 3:c_n^{\mathrm{lin}}=3c_{n-2}^{\mathrm{lin}}+3,\\
    &d_1^{\mathrm{lin}}=2,\qquad d_2^{\mathrm{lin}}=3, \qquad \forall n\ge 3:d_n^{\mathrm{lin}}=3d_{n-2}^{\mathrm{lin}}+3.
\end{align}
\end{corollary}

\begin{proof}
The recurrences follow directly by substituting the explicit formulas of Theorem~\ref{theo:LinearExplicit} and simplifying separately according to the parity of $n$.\qedhere
\end{proof}

\begin{corollary}
For every integer \(n \ge 0\), the following chain of inequalities holds:
\begin{equation}
    b_n^{\mathrm{lin}} \le c_n^{\mathrm{lin}} \le d_n^{\mathrm{lin}} \le a_n^{\mathrm{lin}} \le \ell_n \le l_n .
\end{equation}
\end{corollary}

\begin{corollary}
For the linear variant, the following limits hold:
\begin{equation}
    \lim_{m\to\infty}\frac{a_{2m}^{\mathrm{lin}}}{a_{2m-1}^{\mathrm{lin}}}
=
\lim_{m\to\infty}\frac{b_{2m}^{\mathrm{lin}}}{b_{2m-1}^{\mathrm{lin}}}
=\frac{3}{2},
\end{equation}
\begin{align}
\lim_{m\to\infty}\frac{c_{2m}^{\mathrm{lin}}}{c_{2m-1}^{\mathrm{lin}}}
=\frac{9}{5},\\
\lim_{m\to\infty}\frac{d_{2m}^{\mathrm{lin}}}{d_{2m-1}^{\mathrm{lin}}}
=\frac{9}{7}.
\end{align}
\end{corollary}
 
\section{State Graph}
\label{sec:AssociatedGraph}

We now introduce the graph naturally induced by the parity-constrained Tower of Hanoi problem with $n$ discs. This graph encodes all feasible configurations and legal transitions under the rules of Section~\ref{sec:ProblemDescription}.

Analogously to the classical Hanoi graph $H_{m}^{n}$ (the state graph of the $m$-peg Tower of Hanoi with $n$ discs; see~\cite[Section~5.6]{HinzMythsMaths2018}), the four-peg parity-constrained problem gives rise to a family of graphs, which we call the \emph{parity-constrained Hanoi graphs}.

We code the four pegs by $N_{1}=0$, $E=1$, $O=2$, and $N_{2}=3$. We denote again  by $p(d)$ and $\overline{p}(d)$  the allowed parity peg and the forbidden parity peg of disc $d$, respectively. Thus an even disc may not occupy peg $2$, and an odd disc may not occupy peg $1$.

\subsection*{Vertices}
Let $Q=\{0,1,2,3\}$, and $\Pi_{d}=\{0,p(d),3\}$. A state is encoded by the word $s=s_{n}\cdots s_{1}\in Q^{n}$, where $s_{d}$ is the peg occupied by disc $d$. The vertex set of the \emph{parity-constrained Hanoi graph} $\widetilde{H}_4^{n}$ is

\begin{equation}
    V(\widetilde{H}_4^{n})
=
\Bigl\{
s\in Q^{n}\,\Bigm|\,
\bigl(d \text{ even} \Rightarrow s_{d}\neq 2\bigr)
\ \wedge\
\bigl(d \text{ odd} \Rightarrow s_{d}\neq 1\bigr)
\Bigr\}.
\end{equation}

\subsection*{Edges}
Two states are adjacent if they differ in exactly one coordinate corresponding to a legal move. Formally,
\begin{equation} 
    E(\widetilde{H}_4^{n})
=
\Bigl\{
\{\underline{s}\,i\,\overline{s},\,\underline{s}\,j\,\overline{s}\}
\,\Bigm|\,  \underline{s}\,i\,\overline{s},\,\underline{s}\,j\,\overline{s}\in V(\widetilde{H}_4^{n}),\,
d\in[n],\ i\neq j,\ i,j\in \Pi_{d},\ 
\overline{s}_{k}\notin\{i,j\}\ \forall k<d
\Bigr\},
\end{equation}

i.e., disc $d$ moves between two allowed pegs $i$ and $j$ while no smaller disc $k<d$ occupies either of those pegs. (This is the parity-restricted analogue of~\cite[Eq.\,(5.44)]{HinzMythsMaths2018}.)

Figure~\ref{fig:graphs_123} illustrates $\widetilde{H}_4^{1}$, $\widetilde{H}_4^{2}$, and $\widetilde{H}_4^{3}$. Red edges indicate moves of the largest disc, the red vertex marks $0^{n}$ (the initial state), green vertices mark the four targets of objectives~\textup{\texttt{(a)}}–\textup{\texttt{(d)}}, and dashed edges depict an optimal path for objective~\textup{\texttt{(a)}}.

\subsection*{Recursive structure}
The graph $\widetilde{H}_4^{n}$ decomposes into three copies of $\widetilde{H}_4^{n-1}$ according to the peg occupied by the largest disc $n$:

\begin{equation}
    \widetilde{H}_4^{n} \cong 0\widetilde{H}_4^{n-1}\ \cup\ p(n)\widetilde{H}_4^{n-1}\ \cup\ 3\widetilde{H}_4^{n-1},
\end{equation}

where $i\widetilde{H}_4^{n-1}$ denotes the induced subgraph on states with $s_{n}=i\in \Pi_{d}$. Thus

\begin{equation}
    V(\widetilde{H}_4^{n})=V(0\widetilde{H}_4^{n-1})\ \cup\ V(p(n)\widetilde{H}_4^{n-1})\ \cup\ V(3\widetilde{H}_4^{n-1}).
\end{equation}

Edges are of two types: (i) \emph{internal} edges inside each copy $i\widetilde{H}_4^{n-1}$ (moves of a disc $<n$), and (ii) \emph{bridges} between distinct copies (moves of disc $n$). Therefore, we have the following recursive definition of $E(\widetilde{H}_4^{n})$.

\begin{equation}
\begin{split} E(\widetilde{H}_4^{0})=\varnothing,\;\forall n\geq1:\;
        E(\widetilde{H}_4^{n})=&\{ \{is,it\} \mid i\in  \Pi_{d}, \{s,t\}\in E(\widetilde{H}_4^{n-1})  \}\\ \cup& \{ \{ir,jr\}\mid i,j\in  \Pi_{d}, i\neq j, r \in V(\widetilde{H}_4^{n-1}), r_{k}\neq i,j\ \forall k <n  \}
    \end{split}
\end{equation}

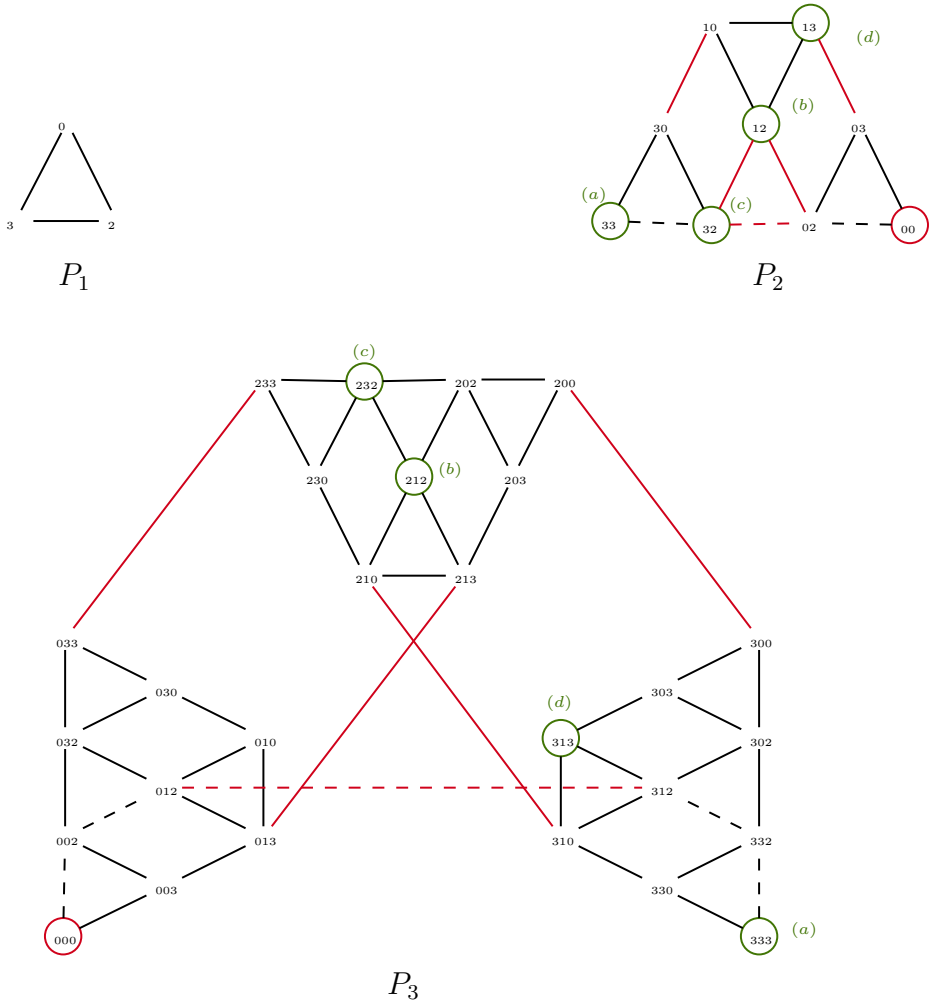
\begin{figure}[ht]
    \centering

\tikzset{every picture/.style={line width=0.75pt}} %set default line width to 0.75pt        
{

\begin{tikzpicture}[x=0.75pt,y=0.75pt,yscale=-1,xscale=1]
%uncomment if require: \path (0,610); %set diagram left start at 0, and has height of 610

% Text Node
\draw (377,182.4) node [anchor=north west][inner sep=0.75pt]  [font=\fontsize{0.35em}{0.42em}\selectfont,]  {$200$};
% Text Node
\draw (352,231.4) node [anchor=north west][inner sep=0.75pt]  [font=\fontsize{0.35em}{0.42em}\selectfont,]  {$203$};
% Text Node
\draw (327,182.4) node [anchor=north west][inner sep=0.75pt]  [font=\fontsize{0.35em}{0.42em}\selectfont,]  {$202$};
% Text Node
\draw (252,231.4) node [anchor=north west][inner sep=0.75pt]  [font=\fontsize{0.35em}{0.42em}\selectfont,]  {$230$};
% Text Node
\draw (226,182.4) node [anchor=north west][inner sep=0.75pt]  [font=\fontsize{0.35em}{0.42em}\selectfont,]  {$233$};
% Text Node
\draw  [color={rgb, 255:red, 65; green, 117; blue, 5 }  ,draw opacity=1 ]  (283, 184.5) circle [x radius= 9.18, y radius= 9.18]   ;
\draw (277,183.4) node [anchor=north west][inner sep=0.75pt]  [font=\fontsize{0.35em}{0.42em}\selectfont,]  {$232$};
% Text Node
\draw (277,281.4) node [anchor=north west][inner sep=0.75pt]  [font=\fontsize{0.35em}{0.42em}\selectfont,]  {$210$};
% Text Node
\draw (327,281.4) node [anchor=north west][inner sep=0.75pt]  [font=\fontsize{0.35em}{0.42em}\selectfont,]  {$213$};
% Text Node
\draw  [color={rgb, 255:red, 65; green, 117; blue, 5 }  ,draw opacity=1 ]  (308, 232.5) circle [x radius= 9.18, y radius= 9.18]   ;
\draw (302,231.4) node [anchor=north west][inner sep=0.75pt]  [font=\fontsize{0.35em}{0.42em}\selectfont,]  {$212$};
% Text Node
\draw (476,313.4) node [anchor=north west][inner sep=0.75pt]  [font=\fontsize{0.35em}{0.42em}\selectfont,]  {$300$};
% Text Node
\draw (426,338.4) node [anchor=north west][inner sep=0.75pt]  [font=\fontsize{0.35em}{0.42em}\selectfont,]  {$303$};
% Text Node
\draw (476,363.4) node [anchor=north west][inner sep=0.75pt]  [font=\fontsize{0.35em}{0.42em}\selectfont,]  {$302$};
% Text Node
\draw (426,438.4) node [anchor=north west][inner sep=0.75pt]  [font=\fontsize{0.35em}{0.42em}\selectfont,]  {$330$};
% Text Node
\draw  [color={rgb, 255:red, 65; green, 117; blue, 5 }  ,draw opacity=1 ]  (482, 464.5) circle [x radius= 9.18, y radius= 9.18]   ;
\draw (476,463.4) node [anchor=north west][inner sep=0.75pt]  [font=\fontsize{0.35em}{0.42em}\selectfont,]  {$333$};
% Text Node
\draw (476,413.4) node [anchor=north west][inner sep=0.75pt]  [font=\fontsize{0.35em}{0.42em}\selectfont,]  {$332$};
% Text Node
\draw (376,413.4) node [anchor=north west][inner sep=0.75pt]  [font=\fontsize{0.35em}{0.42em}\selectfont,]  {$310$};
% Text Node
\draw  [color={rgb, 255:red, 65; green, 117; blue, 5 }  ,draw opacity=1 ]  (382, 364.5) circle [x radius= 9.18, y radius= 9.18]   ;
\draw (376,363.4) node [anchor=north west][inner sep=0.75pt]  [font=\fontsize{0.35em}{0.42em}\selectfont,]  {$313$};
% Text Node
\draw (426,388.4) node [anchor=north west][inner sep=0.75pt]  [font=\fontsize{0.35em}{0.42em}\selectfont,]  {$312$};
% Text Node
\draw (126,313.4) node [anchor=north west][inner sep=0.75pt]  [font=\fontsize{0.35em}{0.42em}\selectfont,]  {$033$};
% Text Node
\draw (176,338.4) node [anchor=north west][inner sep=0.75pt]  [font=\fontsize{0.35em}{0.42em}\selectfont,]  {$030$};
% Text Node
\draw (126,363.4) node [anchor=north west][inner sep=0.75pt]  [font=\fontsize{0.35em}{0.42em}\selectfont,]  {$032$};
% Text Node
\draw (176,438.4) node [anchor=north west][inner sep=0.75pt]  [font=\fontsize{0.35em}{0.42em}\selectfont,]  {$003$};
% Text Node
\draw  [color={rgb, 255:red, 208; green, 2; blue, 27 }  ,draw opacity=1 ]  (131, 464.5) circle [x radius= 9.18, y radius= 9.18]   ;
\draw (125,463.4) node [anchor=north west][inner sep=0.75pt]  [font=\fontsize{0.35em}{0.42em}\selectfont,]  {$000$};
% Text Node
\draw (126,413.4) node [anchor=north west][inner sep=0.75pt]  [font=\fontsize{0.35em}{0.42em}\selectfont,]  {$002$};
% Text Node
\draw (226,413.4) node [anchor=north west][inner sep=0.75pt]  [font=\fontsize{0.35em}{0.42em}\selectfont,]  {$013$};
% Text Node
\draw (226,363.4) node [anchor=north west][inner sep=0.75pt]  [font=\fontsize{0.35em}{0.42em}\selectfont,]  {$010$};
% Text Node
\draw (176,388.4) node [anchor=north west][inner sep=0.75pt]  [font=\fontsize{0.35em}{0.42em}\selectfont,]  {$012$};
% Text Node
\draw  [color={rgb, 255:red, 208; green, 2; blue, 27 }  ,draw opacity=1 ]  (558, 105.5) circle [x radius= 9.18, y radius= 9.18]   ;
\draw (552,104.4) node [anchor=north west][inner sep=0.75pt]  [font=\fontsize{0.35em}{0.42em}\selectfont,color={rgb, 255:red, 0; green, 0; blue, 0 }  ,opacity=1 ,]  {$00$};
% Text Node
\draw (527,53.4) node [anchor=north west][inner sep=0.75pt]  [font=\fontsize{0.35em}{0.42em}\selectfont,]  {$03$};
% Text Node
\draw (502,103.4) node [anchor=north west][inner sep=0.75pt]  [font=\fontsize{0.35em}{0.42em}\selectfont,]  {$02$};
% Text Node
\draw (427,53.4) node [anchor=north west][inner sep=0.75pt]  [font=\fontsize{0.35em}{0.42em}\selectfont,]  {$30$};
% Text Node
\draw  [color={rgb, 255:red, 65; green, 117; blue, 5 }  ,draw opacity=1 ]  (407, 103.5) circle [x radius= 9.18, y radius= 9.18]   ;
\draw (401,102.4) node [anchor=north west][inner sep=0.75pt]  [font=\fontsize{0.35em}{0.42em}\selectfont,]  {$33$};
% Text Node
\draw  [color={rgb, 255:red, 65; green, 117; blue, 5 }  ,draw opacity=1 ]  (458, 105.5) circle [x radius= 9.18, y radius= 9.18]   ;
\draw (452,104.4) node [anchor=north west][inner sep=0.75pt]  [font=\fontsize{0.35em}{0.42em}\selectfont,]  {$32$};
% Text Node
\draw (452,2.4) node [anchor=north west][inner sep=0.75pt]  [font=\fontsize{0.35em}{0.42em}\selectfont,]  {$10$};
% Text Node
\draw  [color={rgb, 255:red, 65; green, 117; blue, 5 }  ,draw opacity=1 ]  (508, 3.5) circle [x radius= 9.18, y radius= 9.18]   ;
\draw (502,2.4) node [anchor=north west][inner sep=0.75pt]  [font=\fontsize{0.35em}{0.42em}\selectfont,]  {$13$};
% Text Node
\draw  [color={rgb, 255:red, 65; green, 117; blue, 5 }  ,draw opacity=1 ]  (483, 54.5) circle [x radius= 9.18, y radius= 9.18]   ;
\draw (477,53.4) node [anchor=north west][inner sep=0.75pt]  [font=\fontsize{0.35em}{0.42em}\selectfont,]  {$12$};
% Text Node
\draw (127,52.4) node [anchor=north west][inner sep=0.75pt]  [font=\fontsize{0.35em}{0.42em}\selectfont,]  {$0$};
% Text Node
\draw (101,102.4) node [anchor=north west][inner sep=0.75pt]  [font=\fontsize{0.35em}{0.42em}\selectfont,]  {$3$};
% Text Node
\draw (152,102.4) node [anchor=north west][inner sep=0.75pt]  [font=\fontsize{0.35em}{0.42em}\selectfont,]  {$2$};
% Text Node
\draw (127,123.4) node [anchor=north west][inner sep=0.75pt]  []  {$\widetilde{H}_4^{1}$};
% Text Node
\draw (477,123.4) node [anchor=north west][inner sep=0.75pt]  []  {$\widetilde{H}_4^{2}$};
% Text Node
\draw (293,481.4) node [anchor=north west][inner sep=0.75pt]  []  {$\widetilde{H}_4^{3}$};
% Text Node
\draw (496.67,456.11) node [anchor=north west][inner sep=0.75pt]  [font=\tiny,color={rgb, 255:red, 65; green, 117; blue, 5 }  ,opacity=1 ,]  {$( a)$};
% Text Node
\draw (318.67,223.45) node [anchor=north west][inner sep=0.75pt]  [font=\tiny,color={rgb, 255:red, 65; green, 117; blue, 5 }  ,opacity=1 ,]  {$( b)$};
% Text Node
\draw (275.33,164.11) node [anchor=north west][inner sep=0.75pt]  [font=\tiny,color={rgb, 255:red, 65; green, 117; blue, 5 }  ,opacity=1 ,]  {$( c)$};
% Text Node
\draw (373.33,341.45) node [anchor=north west][inner sep=0.75pt]  [font=\tiny,color={rgb, 255:red, 65; green, 117; blue, 5 }  ,opacity=1 ,]  {$( d)$};
% Text Node
\draw (390.67,84.78) node [anchor=north west][inner sep=0.75pt]  [font=\tiny,color={rgb, 255:red, 65; green, 117; blue, 5 }  ,opacity=1 ,]  {$( a)$};
% Text Node
\draw (496.67,40.11) node [anchor=north west][inner sep=0.75pt]  [font=\tiny,color={rgb, 255:red, 65; green, 117; blue, 5 }  ,opacity=1 ,]  {$( b)$};
% Text Node
\draw (465.33,90.11) node [anchor=north west][inner sep=0.75pt]  [font=\tiny,color={rgb, 255:red, 65; green, 117; blue, 5 }  ,opacity=1 ,]  {$( c)$};
% Text Node
\draw (529.33,5.45) node [anchor=north west][inner sep=0.75pt]  [font=\tiny,color={rgb, 255:red, 65; green, 117; blue, 5 }  ,opacity=1 ,]  {$( d)$};
% Connection
\draw [color={rgb, 255:red, 0; green, 0; blue, 0 }  ,draw opacity=1 ]   (380.19,189) -- (360.81,227) ;
% Connection
\draw [color={rgb, 255:red, 0; green, 0; blue, 0 }  ,draw opacity=1 ]   (355.19,227) -- (335.81,189) ;
% Connection
\draw [color={rgb, 255:red, 0; green, 0; blue, 0 }  ,draw opacity=1 ]   (374,183.5) -- (342,183.5) ;
% Connection
\draw [color={rgb, 255:red, 0; green, 0; blue, 0 }  ,draw opacity=1 ]   (241,183.68) -- (273.82,184.32) ;
% Connection
\draw [color={rgb, 255:red, 0; green, 0; blue, 0 }  ,draw opacity=1 ]   (278.76,192.64) -- (260.86,227) ;
% Connection
\draw [color={rgb, 255:red, 0; green, 0; blue, 0 }  ,draw opacity=1 ]   (255.08,227) -- (234.92,189) ;
% Connection
\draw [color={rgb, 255:red, 0; green, 0; blue, 0 }  ,draw opacity=1 ]   (330.25,277) -- (312.11,240.71) ;
% Connection
\draw [color={rgb, 255:red, 0; green, 0; blue, 0 }  ,draw opacity=1 ]   (303.89,240.71) -- (285.75,277) ;
% Connection
\draw [color={rgb, 255:red, 0; green, 0; blue, 0 }  ,draw opacity=1 ]   (292,282.5) -- (324,282.5) ;
% Connection
\draw [color={rgb, 255:red, 0; green, 0; blue, 0 }  ,draw opacity=1 ]   (292.18,184.32) -- (324,183.68) ;
% Connection
\draw [color={rgb, 255:red, 0; green, 0; blue, 0 }  ,draw opacity=1 ]   (287.24,192.64) -- (303.76,224.36) ;
% Connection
\draw [color={rgb, 255:red, 0; green, 0; blue, 0 }  ,draw opacity=1 ]   (260.75,238) -- (280.25,277) ;
% Connection
\draw [color={rgb, 255:red, 0; green, 0; blue, 0 }  ,draw opacity=1 ]   (335.75,277) -- (355.25,238) ;
% Connection
\draw [color={rgb, 255:red, 0; green, 0; blue, 0 }  ,draw opacity=1 ]   (330.19,189) -- (312.17,224.32) ;
% Connection
\draw [color={rgb, 255:red, 0; green, 0; blue, 0 }  ,draw opacity=1 ]   (473,319) -- (441,335) ;
% Connection
\draw [color={rgb, 255:red, 0; green, 0; blue, 0 }  ,draw opacity=1 ]   (441,344) -- (473,360) ;
% Connection
\draw [color={rgb, 255:red, 0; green, 0; blue, 0 }  ,draw opacity=1 ]   (482,320) -- (482,359) ;
% Connection
\draw [color={rgb, 255:red, 0; green, 0; blue, 0 }  ,draw opacity=1 ] [dash pattern={on 4.5pt off 4.5pt}]  (482,455.32) -- (482,420) ;
% Connection
\draw [color={rgb, 255:red, 0; green, 0; blue, 0 }  ,draw opacity=1 ]   (473,419) -- (441,435) ;
% Connection
\draw [color={rgb, 255:red, 0; green, 0; blue, 0 }  ,draw opacity=1 ]   (441,444) -- (473.79,460.39) ;
% Connection
\draw [color={rgb, 255:red, 0; green, 0; blue, 0 }  ,draw opacity=1 ]   (390.21,368.61) -- (423,385) ;
% Connection
\draw [color={rgb, 255:red, 0; green, 0; blue, 0 }  ,draw opacity=1 ]   (423,394) -- (391,410) ;
% Connection
\draw [color={rgb, 255:red, 0; green, 0; blue, 0 }  ,draw opacity=1 ]   (382,409) -- (382,373.68) ;
% Connection
\draw [color={rgb, 255:red, 0; green, 0; blue, 0 }  ,draw opacity=1 ]   (482,409) -- (482,370) ;
% Connection
\draw [color={rgb, 255:red, 0; green, 0; blue, 0 }  ,draw opacity=1 ] [dash pattern={on 4.5pt off 4.5pt}]  (473,410) -- (441,394) ;
% Connection
\draw [color={rgb, 255:red, 0; green, 0; blue, 0 }  ,draw opacity=1 ]   (423,435) -- (391,419) ;
% Connection
\draw [color={rgb, 255:red, 0; green, 0; blue, 0 }  ,draw opacity=1 ]   (390.21,360.39) -- (423,344) ;
% Connection
\draw [color={rgb, 255:red, 0; green, 0; blue, 0 }  ,draw opacity=1 ]   (473,369) -- (441,385) ;
% Connection
\draw [color={rgb, 255:red, 208; green, 2; blue, 27 }  ,draw opacity=1 ]   (387.16,189) -- (477.84,309) ;
% Connection
\draw [color={rgb, 255:red, 208; green, 2; blue, 27 }  ,draw opacity=1 ]   (287.13,288) -- (377.88,409) ;
% Connection
\draw [color={rgb, 255:red, 0; green, 0; blue, 0 }  ,draw opacity=1 ]   (141,319) -- (173,335) ;
% Connection
\draw [color={rgb, 255:red, 0; green, 0; blue, 0 }  ,draw opacity=1 ]   (173,344) -- (141,360) ;
% Connection
\draw [color={rgb, 255:red, 0; green, 0; blue, 0 }  ,draw opacity=1 ]   (132,320) -- (132,359) ;
% Connection
\draw [color={rgb, 255:red, 0; green, 0; blue, 0 }  ,draw opacity=1 ] [dash pattern={on 4.5pt off 4.5pt}]  (131.18,455.32) -- (131.89,420) ;
% Connection
\draw [color={rgb, 255:red, 0; green, 0; blue, 0 }  ,draw opacity=1 ]   (141,419) -- (173,435) ;
% Connection
\draw [color={rgb, 255:red, 0; green, 0; blue, 0 }  ,draw opacity=1 ]   (173,443.91) -- (139.24,460.46) ;
% Connection
\draw [color={rgb, 255:red, 0; green, 0; blue, 0 }  ,draw opacity=1 ]   (223,369) -- (191,385) ;
% Connection
\draw [color={rgb, 255:red, 0; green, 0; blue, 0 }  ,draw opacity=1 ]   (191,394) -- (223,410) ;
% Connection
\draw [color={rgb, 255:red, 0; green, 0; blue, 0 }  ,draw opacity=1 ]   (232,409) -- (232,370) ;
% Connection
\draw [color={rgb, 255:red, 0; green, 0; blue, 0 }  ,draw opacity=1 ]   (132,409) -- (132,370) ;
% Connection
\draw [color={rgb, 255:red, 0; green, 0; blue, 0 }  ,draw opacity=1 ] [dash pattern={on 4.5pt off 4.5pt}]  (141,410) -- (173,394) ;
% Connection
\draw [color={rgb, 255:red, 0; green, 0; blue, 0 }  ,draw opacity=1 ]   (191,435) -- (223,419) ;
% Connection
\draw [color={rgb, 255:red, 0; green, 0; blue, 0 }  ,draw opacity=1 ]   (223,360) -- (191,344) ;
% Connection
\draw [color={rgb, 255:red, 0; green, 0; blue, 0 }  ,draw opacity=1 ]   (141,369) -- (173,385) ;
% Connection
\draw [color={rgb, 255:red, 208; green, 2; blue, 27 }  ,draw opacity=1 ] [dash pattern={on 4.5pt off 4.5pt}]  (191,389.5) -- (423,389.5) ;
% Connection
\draw [color={rgb, 255:red, 208; green, 2; blue, 27 }  ,draw opacity=1 ]   (236.21,409) -- (328.79,288) ;
% Connection
\draw [color={rgb, 255:red, 208; green, 2; blue, 27 }  ,draw opacity=1 ]   (136.2,309) -- (227.8,189) ;
% Connection
\draw [color={rgb, 255:red, 0; green, 0; blue, 0 }  ,draw opacity=1 ]   (553.96,97.26) -- (535.7,60) ;
% Connection
\draw [color={rgb, 255:red, 0; green, 0; blue, 0 }  ,draw opacity=1 ]   (530.25,60) -- (510.75,99) ;
% Connection
\draw [color={rgb, 255:red, 0; green, 0; blue, 0 }  ,draw opacity=1 ] [dash pattern={on 4.5pt off 4.5pt}]  (548.82,105.32) -- (517,104.68) ;
% Connection
\draw [color={rgb, 255:red, 0; green, 0; blue, 0 }  ,draw opacity=1 ] [dash pattern={on 4.5pt off 4.5pt}]  (416.17,103.86) -- (448.83,105.14) ;
% Connection
\draw [color={rgb, 255:red, 0; green, 0; blue, 0 }  ,draw opacity=1 ]   (453.96,97.26) -- (435.7,60) ;
% Connection
\draw [color={rgb, 255:red, 0; green, 0; blue, 0 }  ,draw opacity=1 ]   (430.08,60) -- (411.3,95.39) ;
% Connection
\draw [color={rgb, 255:red, 0; green, 0; blue, 0 }  ,draw opacity=1 ]   (503.96,11.74) -- (487.04,46.26) ;
% Connection
\draw [color={rgb, 255:red, 0; green, 0; blue, 0 }  ,draw opacity=1 ]   (478.96,46.26) -- (460.7,9) ;
% Connection
\draw [color={rgb, 255:red, 0; green, 0; blue, 0 }  ,draw opacity=1 ]   (467,3.5) -- (498.82,3.5) ;
% Connection
\draw [color={rgb, 255:red, 208; green, 2; blue, 27 }  ,draw opacity=1 ] [dash pattern={on 4.5pt off 4.5pt}]  (467.18,105.32) -- (499,104.68) ;
% Connection
\draw [color={rgb, 255:red, 208; green, 2; blue, 27 }  ,draw opacity=1 ]   (462.04,97.26) -- (478.96,62.74) ;
% Connection
\draw [color={rgb, 255:red, 208; green, 2; blue, 27 }  ,draw opacity=1 ]   (435.7,49) -- (455.3,9) ;
% Connection
\draw [color={rgb, 255:red, 208; green, 2; blue, 27 }  ,draw opacity=1 ]   (512.04,11.74) -- (530.3,49) ;
% Connection
\draw [color={rgb, 255:red, 208; green, 2; blue, 27 }  ,draw opacity=1 ]   (505.25,99) -- (487.11,62.71) ;
% Connection
\draw    (109.86,98) -- (130.14,59) ;
% Connection
\draw    (135.75,59) -- (155.25,98) ;
% Connection
\draw    (149,103.5) -- (116,103.5) ;

\end{tikzpicture}
}
     \caption{The graphs $\widetilde{H}_4^1$, $\widetilde{H}_4^2$, and $\widetilde{H}_4^3$.
    Red edges represent moves of the largest disc.
    Dashed edges show the optimal path for objective~\textup{\texttt{(a)}}.
    The red circle marks the initial state;
    green circles mark the final states of objectives~\textup{\texttt{(a)}}--\textup{\texttt{(d)}}.}
    \label{fig:graphs_123}
\end{figure}

\subsection*{Counting vertices and edges}

\begin{proposition}[Vertex count]\label{prop:vertex_count}
For all $n\ge 0$,

\begin{equation}
    |V(\widetilde{H}_4^{n})|=3^{n}.
\end{equation}

\end{proposition}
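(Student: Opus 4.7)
The plan is to observe that the defining constraints on the string $s = s_n \cdots s_1 \in Q^n$ are \emph{coordinate-wise independent}: the condition imposed on each $s_d$ depends only on the parity of $d$, not on the values of the other coordinates. So I would simply count the admissible values for each coordinate separately and multiply.

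More precisely, for each fixed $d \in [n]$, the set of admissible pegs is $Q^d = \{0, p(d), 3\}$, obtained from $Q = \{0,1,2,3\}$ by removing the single forbidden parity peg $\overline{p}(d)$. Hence $|Q^d| = 3$ regardless of the parity of $d$. Since $V(P^n) = \prod_{d=1}^{n} Q^d$ as a Cartesian product (there is no cross-coordinate constraint), the multiplication principle gives
\begin{equation*}
|V(P^n)| = \prod_{d=1}^{n} |Q^d| = 3^n.
\end{equation*}
The base case $n = 0$ is the empty product, which yields $|V(P^0)| = 1 = 3^0$, consistent with the convention that the unique state with no discs is the empty word.

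There is no real obstacle here; the only subtlety worth flagging is that the recursive decomposition $P^n \cong 0P^{n-1} \cup p(n)P^{n-1} \cup 3P^{n-1}$ given earlier in the section already encodes the same identity, so an equivalent one-line inductive proof is $|V(P^n)| = 3 \cdot |V(P^{n-1})| = 3 \cdot 3^{n-1} = 3^n$, with base $|V(P^0)| = 1$. Either formulation suffices.
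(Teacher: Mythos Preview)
Your argument is correct and is essentially identical to the paper's: the paper also observes that each disc $d$ has exactly three admissible pegs $\{0,p(d),3\}$ and multiplies to get $3^n$. Your added remark about the equivalent inductive proof via the recursive decomposition is a nice bonus but not needed.
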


\begin{proof}
Each disc $d$ may occupy exactly three pegs: $0$, $p(d)$, or $3$. Hence there are $3^{n}$ admissible words $s\in Q^{n}$, one for each state.\qedhere
\end{proof}

\begin{proposition}[Edge-count recurrence]\label{prop:edge_recurrence}
The number of edges in $\widetilde{H}_4^{n}$, satisfy the following recurrence. 

\begin{equation}
    |E(\widetilde{H}_4^{0})|=0,\quad\forall n\geq 1:|E(\widetilde{H}_4^{n})| \;=\; 3\,|E(\widetilde{H}_4^{\,n-1})| \;+\; 2^{\left\lfloor \frac{n}{2}\right\rfloor+1} \;+\; 1.
\end{equation}

\end{proposition}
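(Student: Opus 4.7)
The plan is to exploit the recursive decomposition $P^{n} \cong 0P^{n-1}\cup p(n)P^{n-1}\cup 3P^{n-1}$ already stated in the preceding paragraphs and split $E(P^{n})$ into \emph{internal} edges (lying inside a single copy $iP^{n-1}$) and \emph{bridge} edges (joining two distinct copies). The internal contribution is immediate: the map $s\mapsto is$ is a graph isomorphism between $P^{n-1}$ and the induced subgraph $iP^{n-1}$, since prepending a fixed symbol $i\in Q^{n}$ neither changes the legality of any move of a disc $k<n$ nor violates the parity condition. Hence each of the three copies contributes exactly $|E(P^{n-1})|$ internal edges, summing to $3\,|E(P^{n-1})|$.

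The remaining task is to count bridge edges, which correspond precisely to moves of the largest disc $n$. Such an edge connects two states that agree on coordinates $1,\dots,n-1$ and whose $n$-th coordinate takes distinct values $i,j\in Q^{n}=\{0,p(n),3\}$; by the edge definition it is legal exactly when no smaller disc $k<n$ sits on peg $i$ or peg $j$. I would therefore count, for each unordered pair $\{i,j\}\subseteq Q^{n}$, the number of feasible configurations $r\in V(P^{n-1})$ that avoid both $i$ and $j$ on every coordinate.

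The three pairs split into two structurally distinct types. For $\{i,j\}=\{0,3\}$ each smaller disc $k$ is forced onto its unique parity-compatible peg $p(k)$, yielding exactly one configuration. For either of the pairs $\{0,p(n)\}$ and $\{p(n),3\}$, a smaller disc $k$ with $p(k)=p(n)$ has a single admissible position (the remaining neutral peg), while a disc with $p(k)\neq p(n)$ has exactly two admissible positions (the remaining neutral peg together with its own parity peg). Counting the smaller discs whose parity peg matches $p(n)$ — there are $n/2-1$ of them when $n$ is even and $(n-1)/2$ when $n$ is odd — one obtains $2^{\lfloor n/2\rfloor}$ feasible configurations for each of these two pairs. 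Summing the three contributions yields
\[
1 + 2\cdot 2^{\lfloor n/2\rfloor} \;=\; 2^{\lfloor n/2\rfloor+1}+1,
\]
and adding the internal contribution $3|E(P^{n-1})|$ gives the announced recurrence.

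The main obstacle is the parity bookkeeping in the bridge count: one must check that the two cases $\{0,p(n)\}$ and $\{p(n),3\}$ really are symmetric (both involve removing the parity-$p(n)$ peg together with one neutral peg) and that the two parities of $n$ lead to the same closed form $2^{\lfloor n/2\rfloor+1}+1$. Once the set $Q^{n}$ and the peg-restriction rules are correctly unfolded, no further case analysis is required and the remainder is straightforward arithmetic.
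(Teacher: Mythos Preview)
Your proof is correct and follows essentially the same approach as the paper: both split $E(P^{n})$ into the $3|E(P^{n-1})|$ internal edges from the three isomorphic copies and then count bridge edges by case analysis on the three peg pairs $\{0,3\}$, $\{0,p(n)\}$, $\{p(n),3\}$, arriving at $1+2\cdot 2^{\lfloor n/2\rfloor}$. The only cosmetic difference is that the paper phrases the bridge count in terms of the opposite-parity discs (those with two choices) directly, whereas you first tally the same-parity discs and obtain the free count by complement; the arithmetic and the underlying idea are identical.
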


\begin{proof}
Internal edges: for each fixed position of disc $n$ there is an induced copy of $\widetilde{H}_4^{n-1}$, contributing $|E(\widetilde{H}_4^{n-1})|$ edges; three such copies give $3\,|E(\widetilde{H}_4^{n-1})|$.

Bridges (moves of disc $n$): disc $n$ may move between any two of its three allowed pegs $ \Pi_{d}$, provided no smaller disc lies on either of those two pegs.
\begin{itemize}
\item Between $0$ and $p(n)$: peg $p(n)$ must be empty. All smaller discs of the \emph{same} parity as $n$ must therefore be on peg $3$, while the opposite-parity smaller discs may be anywhere among their two allowed pegs; this yields $2^{\lfloor \frac{n}{2}\rfloor}$ bridge edges.
\item Between $p(n)$ and $3$: symmetrically, peg $3$ must be empty; again we obtain $2^{\lfloor \frac{n}{2}\rfloor}$ bridge edges.
\item Between $0$ and $3$: both pegs $0$ and $3$ must be free of smaller discs, forcing a unique parity-separated arrangement of the $n-1$ smaller discs on pegs $1$ and $2$; hence exactly one bridge edge.
\end{itemize}
Summing gives $2\cdot 2^{\lfloor \frac{n}{2}\rfloor}+1 = 2^{\lfloor \frac{n}{2}\rfloor+1}+1$ bridges, proving the recurrence.\qedhere
\end{proof}

\begin{proposition}[Edge-count closed form]\label{prop:edge_closed}
For all $n\ge 0$,
\begin{equation}\label{eq:HnmEdgesParity}
    |E(\widetilde{H}_4^{n})|=
\begin{cases}
\dfrac{3^{\,n+3}-20\cdot 2^{\frac{n}{2}}-7}{14}, & \text{$n$ even},\\[10pt]
\dfrac{3^{\,n+3}-32\cdot 2^{\frac{n-1}{2}}-7}{14}, & \text{$n$ odd}.
\end{cases}
\end{equation}
\end{proposition}

\begin{proof}
Unfold the linear inhomogeneous recurrence of Proposition~\ref{prop:edge_recurrence} separately for even and odd $n$, using $|E(\widetilde{H}_4^{0})|=0$ and the geometric sums
\(
\sum 3^{k} \) and \( \sum 2^{\lfloor \frac{k}{2}\rfloor}.
\)
A routine calculation yields the stated expressions.\qedhere
\end{proof}

The number of edges in Hanoi graphs $H_{m}^{n}$ has been computed using various approaches in \cite{KlavzarCombinatoricstopmost, HINZ202219, mehiri2024, ArettColoring}. In Proposition~\ref{prop:edges_compa}, we compare the number of edges in $\widetilde{H}_4^{n}$ with those of the three-peg and four-peg Hanoi graphs.

\begin{proposition}[Edges number comparison]
\label{prop:edges_compa}
For all $n \ge 0$,
\begin{equation}
    |E(H_{3}^{n})| \;\le\; |E(\widetilde{H}_4^{n})| \;\le\; |E(H_{4}^{n})|.
\end{equation}
\end{proposition}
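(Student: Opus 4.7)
The plan is to prove the two inequalities by independent arguments: the upper bound via a structural identification of $P^{n}$ as an induced subgraph of $H_{4}^{n}$, and the lower bound by an induction on $n$ driven by the edge-count recurrences already established.

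For the upper bound $|E(P^{n})| \le |E(H_{4}^{n})|$, I would show that $P^{n}$ coincides with the subgraph of $H_{4}^{n}$ induced on the vertex set $V(P^{n})$. Every parity-admissible word is \emph{a fortiori} a valid unconstrained four-peg state, so $V(P^{n}) \subseteq V(H_{4}^{n})$. Moreover, a move between two parity-admissible states is legal in $P^{n}$ if and only if it is legal in $H_{4}^{n}$: the tower rule is identical in both models, and the parity rule is automatically satisfied whenever both endpoints respect the parity constraints (so it is redundant on $V(P^{n})$). Hence $E(P^{n}) \subseteq E(H_{4}^{n})$, and in particular $|E(P^{n})| \le |E(H_{4}^{n})|$.

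For the lower bound $|E(H_{3}^{n})| \le |E(P^{n})|$, I would proceed by induction on $n$. The base cases $n=0,1$ yield equality, with both sides equal to $0$ and $3$, respectively. For the inductive step I would combine the classical edge-count recurrence $|E(H_{3}^{n})| = 3\,|E(H_{3}^{n-1})| + 3$ (standard and appearing in the references cited immediately before Proposition~\ref{prop:edges_compa}) with Proposition~\ref{prop:edge_recurrence}, which gives $|E(P^{n})| = 3\,|E(P^{n-1})| + 2^{\lfloor n/2 \rfloor + 1} + 1$. Applying the inductive hypothesis to the main term and comparing the two inhomogeneous terms reduces the step to the elementary inequality $2^{\lfloor n/2 \rfloor + 1} + 1 \ge 3$, i.e.\ $\lfloor n/2 \rfloor \ge 0$, which holds for every $n \ge 1$.

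I do not anticipate any substantive obstacle; both halves collapse to elementary verifications once the embedding and the two recurrences are in hand. A slightly more computational alternative, useful as a sanity check, is to compare the closed forms of Proposition~\ref{prop:edge_closed} directly against $|E(H_{3}^{n})| = (3^{n+1}-3)/2$ and the standard formula for $|E(H_{4}^{n})|$, splitting on the parity of $n$; in that route the inequalities follow from the dominant-term comparison, with $|E(P^{n})|$ of exponential order $\tfrac{27}{14}\cdot 3^{n}$, sandwiched between $\tfrac{3}{2}\cdot 3^{n}$ for $|E(H_{3}^{n})|$ and the $4^{n}$-order growth of $|E(H_{4}^{n})|$, the lower-order corrections being negligible and easily controlled for small $n$ by direct inspection.
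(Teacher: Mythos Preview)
Your proposal is correct and follows essentially the same strategy as the paper: the right inequality via the subgraph inclusion $P^{n}\subseteq H_{4}^{n}$, and the left inequality by induction on $n$. Your inductive argument for the lower bound, comparing the inhomogeneous terms of the two edge-count recurrences, is in fact more explicit than the paper's one-line sketch.
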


\begin{proof}
The left inequality follows by induction on $n$, using that in $\widetilde{H}_4^n$ each disc may occupy three pegs, whereas in $H_3^n$ it may occupy only two. The right inequality holds since $\widetilde{H}_4^n$ is obtained from $H_{4}^{n}$ by removing all states violating the parity constraints (see Theorem \ref{theo:containment}).\qedhere
\end{proof}

Table~\ref{tab:firstvaluesEdgenbr} lists the first values of the three edge-count sequences.

\begin{table}
\caption{First eleven values of $|E(H_{3}^{n})|$, $|E(H_{4}^{n})|$, and $|E(\widetilde{H}_4^{n})|$.}
\label{tab:firstvaluesEdgenbr}
\resizebox{\columnwidth}{!}{
\begin{tabular*}{\linewidth}{@{\extracolsep{\fill}}c|lllllllllll@{}}
\toprule
$n$ & 0 & 1 & 2 & 3 & 4 & 5 & 6 & 7 & 8 & 9 & 10 \\\midrule
$|E(H_{3}^{n})|$ & 0 & 3 & 12 & 39 & 120 & 363 & 1092 & 3279 & 9840 & 29523 & 88572 \\
$|E(\widetilde{H}_4^{n})|$ & 0 & 3 & 14 & 47 & 150 & 459 & 1394 & 4199 & 12630 & 37923 & 113834\\
$|E(H_{4}^{n})|$ & 0 & 6 & 36 & 168 & 720 & 2976 & 12096 & 48768 & 195840 & 784896 & 3142656 \\
\bottomrule
\end{tabular*}
}

\end{table}

 Using the classical formula
\begin{equation}\label{eq:HnmEdges}
    |E(H_m^n)|=\frac{1}{4}\,m(m-1)\bigl(m^n-(m-2)^n\bigr)
\end{equation}
for the number of edges of the \(p\)-peg Hanoi graph (see \cite[Proposition~5.43]{HinzMythsMaths2018}), together with Proposition~\ref{prop:edge_closed}, we obtain the following asymptotic comparisons.

\begin{proposition}[Asymptotic comparison of the number of edges]
\label{prop:edge-asymptotics}
The parity-constrained four-peg Hanoi graph satisfies
\begin{equation}\label{eq:edge-asymptotics}
\lim_{n\to\infty}\frac{|E(\widetilde{H}_4^n)|}{|E(H_3^n)|}=\frac{9}{7}.
\end{equation}
In particular, asymptotically, \(\widetilde{H}_4^n\) has about \(28.57\%\) more edges than the classical three-peg Hanoi graph \(H_3^n\).

Moreover,
\begin{equation}
    \lim_{n\to\infty}\frac{|E(H_4^n)|}{|E(\widetilde{H}_4^n)|}=\lim_{n\to\infty}\frac{14}{9}\left(\frac{4}{3}\right)^n=+\infty.
\end{equation}
\end{proposition}

\begin{proof}
For \(p=3\), Formula \eqref{eq:HnmEdges} gives $|E(H_3^n)|=\frac{3}{2}(3^n-1)$. In both cases of \eqref{eq:HnmEdgesParity},
\[
|E(\widetilde{H}_4^n)|\sim \frac{3^{n+3}}{14}.
\]
Therefore,
\[
\frac{|E(\widetilde{H}_4^n)|}{|E(H_3^n)|}
\sim
\frac{3^{n+3}/14}{(3/2)(3^n-1)}
\sim
\frac{3^{n+3}/14}{(3/2)3^n}
=
\frac{27/14}{3/2}
=
\frac{9}{7}\approx 1.2857.
\]
For \(p=4\), Formula \eqref{eq:HnmEdges} gives $|E(H_4^n)|=3(4^n-2^n)$. Since $|E(H_4^n)|\sim 3\cdot 4^n$ we get
\[
\lim_{n\to\infty}\frac{|E(H_4^n)|}{|E(\widetilde{H}_4^n)|}=\lim_{n\to\infty}\frac{14}{9}\left(\frac{4}{3}\right)^n=+\infty.\qedhere
\]
\end{proof}

\subsection*{Degrees}

\begin{proposition}[Minimum and maximum degrees]\label{prop:degrees}
For all $n\ge 1$,
\begin{equation}
    \delta(\widetilde{H}_4^n)=2,
\end{equation}
\begin{equation}
    \Delta(\widetilde{H}_4^n)=
\begin{cases}
2,& n=1,\\[2pt]
4,& n=2,\\[2pt]
5,& n\ge 3.
\end{cases}
\end{equation}
\end{proposition}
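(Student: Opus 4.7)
The plan is to count $\deg_{P^n}(s)$ by summing over topmost discs. Call a disc $d$ \emph{topmost in $s$} if no smaller disc lies on peg $s_d$, and let $\mu(d,s)\in\{0,1,2\}$ denote the number of pegs $j\in Q^d\setminus\{s_d\}$ on which no disc smaller than $d$ sits. Since every legal move is uniquely identified by the disc it moves, one has
\[
\deg_{P^n}(s)\;=\;\sum_{d\text{ topmost in }s}\mu(d,s),
\]
and the whole proof reduces to controlling these summands. For the minimum, disc $1$ is always topmost and, as nothing is smaller, satisfies $\mu(1,s)=2$ for every $s$; thus $\delta(P^n)\geq 2$, with equality attained at the perfect state $s=0^n$, where only disc $1$ is topmost. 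This also settles $\Delta(P^1)=2$.

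The engine of the $\Delta$ bound is a parity lemma I would prove first: \emph{for every $d\geq 3$ and every state in which $d$ is topmost, $\mu(d,s)\leq 1$.} Indeed, $\mu(d,s)=2$ would force every smaller disc onto the unique peg $\overline{p}(d)=3-p(d)\notin Q^d$; but $\overline{p}(d)\in\{1,2\}$ has the opposite parity from $d$, and so cannot host disc $1$ (when $d$ is odd) or disc $2$ (when $d$ is even), both of which are strictly less than $d\geq 3$. For $n=2$ this already yields $\Delta(P^2)\leq\mu(1,s)+\mu(2,s)\leq 4$, attained at the state with $s_2=0$ and $s_1=2$.

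For $n\geq 3$ I would split on the relative positions of discs $1$ and $2$. If $s_1=s_2$, disc $2$ is hidden below disc $1$ and the at most three other non-empty pegs each contribute $\leq 1$ by the lemma, giving $\deg(s)\leq 2+3=5$. If $s_1\neq s_2$, only two further pegs can be non-empty; when $\mu(2,s)\leq 1$, this immediately yields $\deg(s)\leq 2+1+2=5$. The delicate subcase is $\mu(2,s)=2$, which forces $s_1=2$ (the unique position of disc $1$ outside $Q^2$). There, any topmost disc $d\geq 3$ on one of the two remaining pegs has its moves to the pegs hosting discs $1$ and $2$ blocked, so its only candidate target is the \emph{other} remaining peg, requiring the topmost disc there to exceed $d$. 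If both remaining pegs carried topmost discs $d\neq d'$ each contributing $1$, we would need $d'>d$ and $d>d'$ simultaneously, a contradiction; hence at most one of them contributes, and $\deg(s)\leq 2+2+1=5$.

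The matching lower bound $\Delta(P^n)\geq 5$ for $n\geq 3$ is witnessed by the state with $s_1=2$, $s_2=1$, and $s_d=0$ for all $d\geq 3$: discs $1$ and $2$ each contribute $2$, disc $3$ contributes $1$ via the empty peg $3$, and all larger discs are hidden. The main obstacle I anticipate is the mutual-ordering argument in the subcase $\mu(2,s)=2$, since the naïve bound $2+2+2\cdot 1=6$ drops to $5$ only after the parity restriction is invoked to funnel both candidate moves through the same clear peg.
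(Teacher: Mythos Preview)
Your proof is correct and more thorough than the paper's. The two diverge on the upper bound $\Delta(P^n)\leq 5$ for $n\geq 3$. The paper argues extrinsically: $P^n$ sits inside $H_4^n$ as an induced subgraph with $\Delta(H_4^n)=6$, and in every $P^n$-state disc~$1$ loses one of its three $H_4^n$-moves (the move to peg~$1$), so $\deg_{P^n}(s)\leq\deg_{H_4^n}(s)-1\leq 5$ in one stroke. You instead work intrinsically, proving the parity lemma $\mu(d,s)\leq 1$ for $d\geq 3$ and then splitting on whether discs~$1$ and~$2$ share a peg and on the value of $\mu(2,s)$; the subcase $\mu(2,s)=2$ genuinely needs your mutual-ordering contradiction to rule out two simultaneous contributions of~$1$ from the remaining pegs. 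Your route is self-contained and isolates the structural reason larger discs lose mobility---a fact the paper later reuses, without naming it, in its clique-number argument. The paper's embedding approach is shorter but leans on the known value of $\Delta(H_4^n)$ and is stated loosely (the ``$2+2+2=6$'' decomposition is a miscount of $3+2+1$, though the conclusion survives). For the minimum degree and the small cases $n=1,2$, the two proofs coincide.
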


\begin{proof}
Clearly $\delta(\widetilde{H}_4^n)\neq 1$. On the other hand, in any perfect state (all discs on a single peg), the only movable disc is $1$. By parity, disc $1$ cannot use peg $E$ (peg~$1$), so it has exactly two legal moves. Hence $\delta(\widetilde{H}_4^n)=2$.

 For $n=1$, the graph is a $3$-cycle, so $\Delta(\widetilde{H}_4^1)=2$. For $n=2$, one checks directly that the maximum degree is $\Delta(\widetilde{H}_4^2)=4$ from Figure \ref{fig:graphs_123}. For $n\ge 3$, consider a configuration where discs $1,2,3$ are simultaneously unblocked and occupy distinct allowed pegs; in $H_4^n$ this gives $2+2+2=6$ moves, but the parity restriction removes one move for disc~$1$, namely, the move between pegs $1$ and $2$, leaving $2+2+1=5$ total allowed moves in a such state. Such a state exists, so $\Delta(\widetilde{H}_4^n)=5$ for $n\ge 3$.\qedhere
\end{proof}

\begin{corollary}\label{cor:degree-compare}
For all $n\ge 1$,
\begin{align}
\delta(\widetilde{H}_4^n)&=\delta(H_3^n),\\
\Delta(\widetilde{H}_4^n)&<\Delta(H_4^n).
\end{align}
\end{corollary}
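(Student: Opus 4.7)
The plan is to deduce the corollary directly from Proposition~\ref{prop:degrees}, which already pins down $\delta(P^n)=2$ and $\Delta(P^n)\in\{2,4,5\}$ according to whether $n=1$, $n=2$, or $n\ge 3$. It then remains only to control the extremal degrees on the classical side.

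First I would establish the left-hand equality by showing that $\delta(H_3^n)=2$. The argument mirrors the one used for $P^n$: in $H_3^n$, disc~$1$ is always the topmost disc on its own peg and can always move to either of the other two pegs, so every vertex has degree at least~$2$; on the other hand, in any perfect state only disc~$1$ is movable, giving exactly two legal moves. Combined with Proposition~\ref{prop:degrees}, this yields $\delta(P^n)=\delta(H_3^n)=2$.

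Next I would handle the strict inequality by exhibiting, for each $n\ge 1$, a vertex of $H_4^n$ whose degree strictly exceeds $\Delta(P^n)$. For $n=1$, the graph $H_4^1$ is isomorphic to $K_4$, so every vertex has degree $3>2=\Delta(P^1)$. For $n=2$, the state that places discs~$1$ and~$2$ on two distinct pegs admits $3+2=5$ legal moves, hence $\Delta(H_4^2)\ge 5>4=\Delta(P^2)$. For $n\ge 3$, I would use the state in which discs~$1,2,3$ sit alone on three distinct pegs while discs~$4,\dots,n$ are stacked on the fourth peg: disc~$1$ then has three legal targets, disc~$2$ has two, and disc~$3$ has one, giving total degree $6>5=\Delta(P^n)$.

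I do not anticipate any genuine obstacle. The only point worth noting is that the witness used in the case $n\ge 3$ is, up to a relabelling of pegs, exactly the configuration realising $\Delta(P^n)=5$ in Proposition~\ref{prop:degrees}; the single legal move that exists in $H_4^n$ but not in $P^n$ is the direct transfer of disc~$1$ between pegs~$E$ and~$O$, which the parity rule forbids. This makes the gap between $\Delta(P^n)$ and $\Delta(H_4^n)$ transparent and confirms the strict inequality.
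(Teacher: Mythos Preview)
Your proposal is correct. The paper states this corollary without proof, so your case analysis on $n$ together with the well-known values $\delta(H_3^n)=2$ and $\Delta(H_4^n)\in\{3,5,6\}$ for $n=1,2,\ge 3$ is exactly the routine verification the reader is expected to supply from Proposition~\ref{prop:degrees}.
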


\begin{proposition}[Average degree]\label{prop:avg-degree}
Let $\overline{d}(\widetilde{H}_4^n)$ denote the average degree. Then, using $|V(\widetilde{H}_4^n)|=3^n$ and Proposition~\ref{prop:edge_closed},
\begin{equation}
 \overline{d}(\widetilde{H}_4^n)=\frac{2\,|E(\widetilde{H}_4^n)|}{|V(\widetilde{H}_4^n)|}
=
\begin{cases}
\displaystyle \frac{27}{7}\;-\;\frac{20}{7}\,\frac{2^{\frac{n}{2}}}{3^{n}}\;-\;\frac{1}{3^{n}}, & \text{$n$ even},\\[10pt]
\displaystyle \frac{27}{7}\;-\;\frac{32}{7}\,\frac{2^{\frac{n-1}{2}}}{3^{n}}\;-\;\frac{1}{3^{n}}, & \text{$n$ odd}.
\end{cases}
\end{equation}

In particular,
\begin{equation}
    \lim_{n\to\infty}\overline{d}(\widetilde{H}_4^n)=\frac{27}{7}\approx 3.8571.
\end{equation}
\end{proposition}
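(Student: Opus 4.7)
The plan is to apply the handshaking identity $\overline{d}(P^n) = 2|E(P^n)|/|V(P^n)|$ directly, using the vertex count $|V(P^n)| = 3^n$ from Proposition~\ref{prop:vertex_count} and the closed-form edge count from Proposition~\ref{prop:edge_closed}. Substituting these expressions yields
\[
\overline{d}(P^n) = \frac{2}{3^n}\cdot \frac{3^{n+3} - c_n \cdot 2^{\lfloor n/2\rfloor} - 7}{14},
\]
where $c_n = 20$ if $n$ is even and $c_n = 32\cdot 2^{-1/2}\cdot \sqrt{2}$ (i.e.\ effectively $32$ paired with $2^{(n-1)/2}$) if $n$ is odd. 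A straightforward simplification, splitting into even and odd cases and using $2\cdot 27/14 = 27/7$, produces the two stated formulas. This is a mechanical computation and should present no genuine obstacle.

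For the asymptotic part, I would observe that $\sqrt{2} < 3$, so $2^{\lfloor n/2\rfloor}/3^n \to 0$ as $n \to \infty$. Both parity-dependent cases of the formula therefore share the same limit $27/7$, establishing $\lim_{n\to\infty} \overline{d}(P^n) = 27/7 \approx 3.8571$. This confirms that asymptotically the graph has average degree strictly less than $5$, which is consistent with the maximum degree established in Proposition~\ref{prop:degrees}.

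Finally, for the bounds $2 = \delta(P^n) \le \overline{d}(P^n) < 5 = \Delta(P^n)$ (for $n \ge 3$), the left inequality is immediate since the average degree of a graph is at least its minimum degree, and $\delta(P^n) = 2$ by Proposition~\ref{prop:degrees}. For the strict upper bound, the natural approach is to note that $\overline{d}(P^n) \le \Delta(P^n) = 5$, with equality only if $P^n$ is $5$-regular; since $\delta(P^n) = 2 < 5$, the graph is not regular, so the inequality is strict. Alternatively, one can directly check from the closed form that $\overline{d}(P^n) < 27/7 < 5$ for all $n$ (since the subtracted terms $\frac{20}{7}\cdot 2^{n/2}/3^n + 1/3^n$ and $\frac{32}{7}\cdot 2^{(n-1)/2}/3^n + 1/3^n$ are strictly positive), which gives a sharper bound. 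The only minor care needed is verifying the small cases $n = 3, 4, \dots$ against the general formula via Table~\ref{tab:firstvaluesEdgenbr}, which acts as a consistency check rather than a step of the proof.
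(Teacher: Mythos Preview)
Your proposal is correct and follows essentially the same approach as the paper, which simply states that the result is a straightforward computation from $|V(P^n)|=3^n$ and the closed-form edge count in Proposition~\ref{prop:edge_closed}. Your treatment is in fact more detailed than the paper's one-line proof, explicitly handling the limit and the degree bounds, and everything you write is sound.
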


\begin{proof}
A straightforward computation using the closed form of $|E(\widetilde{H}_4^n)|$ from Proposition~\ref{prop:edge_closed}, together with $|V(\widetilde{H}_4^n)|=3^n$, yields the claim.\qedhere
\end{proof}

\subsection*{Planarity}

It is known that the only planar Hanoi graphs $H_{m}^{n}$ are $H_{3}^{n}$ (for all $n \ge 0$), together with $H_{4}^{0}$, $H_{4}^{1}$, and $H_{4}^{2}$; whereas $H_{4}^{n}$ is non-planar for all $n \ge 3$ \cite{HINZ2002263}. In Theorem~\ref{theo:planarity}, we show that, similarly to the case of $H_{4}^{n}$, the graphs $\widetilde{H}_4^{n}$ are planar only for $n \le 2$, and become non-planar for all $n \ge 3$.

\begin{theorem}[Planarity]
\label{theo:planarity}
The graphs $\widetilde{H}_4^0$, $\widetilde{H}_4^1$, and $\widetilde{H}_4^2$ are planar, whereas $\widetilde{H}_4^n$ is non-planar for all $n \ge 3$.
\end{theorem}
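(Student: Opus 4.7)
The claim splits naturally into a planar direction and a non-planar direction. For $n \in \{0,1,2\}$ the planar embeddings are already exhibited in Figure~\ref{fig:graphs_123}: $P^{0}$ is a single vertex, $P^{1}$ is a triangle, and the drawing of $P^{2}$ with $9$ vertices and $14$ edges has no crossings, so these three graphs are planar by inspection.

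For the non-planar direction I would argue by induction on $n$, with base case $n = 3$. For the inductive step, the recursive decomposition
\[
P^{n}\ \cong\ 0P^{n-1}\ \cup\ p(n)P^{n-1}\ \cup\ 3P^{n-1}
\]
exhibits $P^{n-1}$ as an induced subgraph of $P^{n}$ for every $n \ge 1$. Iterating, $P^{n}$ contains $P^{3}$ as an induced subgraph for every $n \ge 3$, and since planarity is hereditary, non-planarity of $P^{3}$ propagates to all higher $n$.

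The heart of the argument, and the main obstacle, is to produce a Kuratowski minor inside $P^{3}$. My plan is to exhibit a $K_{3,3}$ minor by exploiting the same recursive decomposition at level $3$. The graph $P^{3}$ decomposes into three induced copies of $P^{2}$ linked by the five disc-$3$ bridges enumerated in the proof of Proposition~\ref{prop:edge_recurrence}: two between $0P^{2}$ and $2P^{2}$, two between $2P^{2}$ and $3P^{2}$, and a unique one between $0P^{2}$ and $3P^{2}$. I would partition an appropriate subset of $V(P^{3})$ into six pairwise-disjoint connected branch sets, with three sets aligned with the bridge endpoints in $0P^{2}$ and three with those in $3P^{2}$, using internal paths in the middle copy $2P^{2}$ (which is connected and dense) to realize the nine cross-adjacencies required by $K_{3,3}$. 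The delicate point is that the bridge between pegs $0$ and $3$ is unique, so one branch set must span both outer copies, forcing the remaining routes through $2P^{2}$ to be chosen so that all six branch sets stay disjoint and connected; as a safeguard, non-planarity of the $27$-vertex graph $P^{3}$ can also be confirmed by a direct planarity-testing algorithm.
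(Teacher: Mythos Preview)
Your overall architecture matches the paper's: the planar cases $n\le 2$ are handled by the drawings in Figure~\ref{fig:graphs_123}, the non-planar cases are reduced to $n=3$ via the containment $P^{3}\subseteq P^{n}$, and the crux is to locate a Kuratowski obstruction in $P^{3}$. So strategically you are on the same track.

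The genuine gap is that you never actually produce the $K_{3,3}$. What you have written is a plan (``I would partition an appropriate subset\ldots'', ``using internal paths in the middle copy\ldots''), together with an acknowledgement that the disjointness of the six branch sets is delicate because only one bridge joins $0P^{2}$ and $3P^{2}$. A plan is not a proof: until the six branch sets (or, equivalently, the six branch vertices and the nine internally disjoint paths of a subdivision) are written down and checked, the argument is incomplete. Your ``safeguard'' of running a planarity-testing algorithm on the $27$-vertex graph is a computer verification, not a proof, and in any case does not deliver the certificate the reader needs.

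The paper closes this gap in the most direct way: it names fifteen explicit vertices of $P^{3}$, deletes four specified edges from the subgraph they induce, and points to the resulting $K_{3,3}$ subdivision (with the two colour classes and the subdivision vertices identified in a figure). You should do the same: pick concrete states for the six branch vertices and list the nine paths, rather than describing abstractly how one might find them.
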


\begin{proof}
Figure~\ref{fig:graphs_123} shows planar embeddings of $\widetilde{H}_4^1$ and $\widetilde{H}_4^2$, and the planarity of $\widetilde{H}_4^0$ is trivial.

For $n \ge 3$, observe that $\widetilde{H}_4^3 \subset \widetilde{H}_4^n$; thus, it suffices to prove that $\widetilde{H}_4^3$ is non-planar.
Consider the subgraph of $\widetilde{H}_4^3$ induced by the vertex set
\[
\{032,012,013,002,010,213,003,033,030,210,212,232,233,310,312\}.
\]
After deleting the four edges $\{210,212\}$, $\{013,012\}$, $\{012,032\}$, and $\{030,033\}$, 
the resulting subgraph contains a subdivision of the complete bipartite graph $K_{3,3}$.
Figure~\ref{fig:planarity_prooft} illustrates two embeddings of this subdivision: 
one aligned with the vertex positions used in Figure~\ref{fig:graphs_123}, and another using a standard layout of $K_{3,3}$. The red and blue vertices represent the two partite sets of $K_{3,3}$, while the green vertices correspond to subdivision vertices.\qedhere
\end{proof}

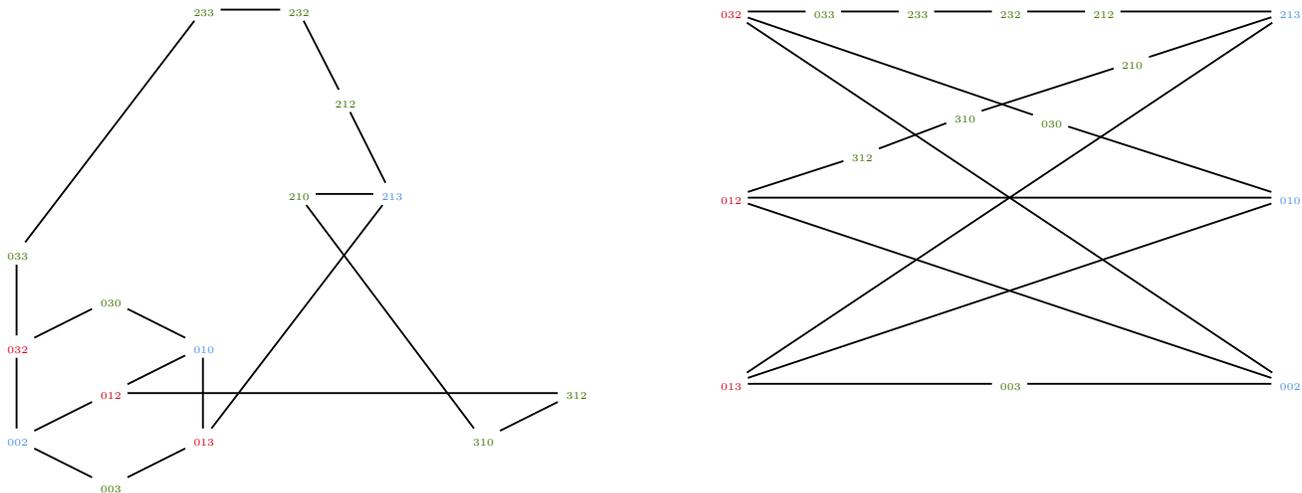
\begin{figure}[ht]
    \centering

\tikzset{every picture/.style={line width=0.75pt}} %set default line width to 0.75pt        
\resizebox{\columnwidth}{!}{

\begin{tikzpicture}[x=0.75pt,y=0.75pt,yscale=-1,xscale=1]
%uncomment if require: \path (0,365); %set diagram left start at 0, and has height of 365

% Text Node
\draw (101,2.4) node [anchor=north west][inner sep=0.75pt]  [font=\fontsize{0.35em}{0.42em}\selectfont,color={rgb, 255:red, 65; green, 117; blue, 5 }  ,opacity=1 ,]  {$233$};
% Text Node
\draw (152,2.4) node [anchor=north west][inner sep=0.75pt]  [font=\fontsize{0.35em}{0.42em}\selectfont,color={rgb, 255:red, 65; green, 117; blue, 5 }  ,opacity=1 ,]  {$232$};
% Text Node
\draw (152,101.4) node [anchor=north west][inner sep=0.75pt]  [font=\fontsize{0.35em}{0.42em}\selectfont,color={rgb, 255:red, 65; green, 117; blue, 5 }  ,opacity=1 ,]  {$210$};
% Text Node
\draw (202,101.4) node [anchor=north west][inner sep=0.75pt]  [font=\fontsize{0.35em}{0.42em}\selectfont,color={rgb, 255:red, 74; green, 144; blue, 226 }  ,opacity=1 ,]  {$213$};
% Text Node
\draw (177,51.4) node [anchor=north west][inner sep=0.75pt]  [font=\fontsize{0.35em}{0.42em}\selectfont,color={rgb, 255:red, 65; green, 117; blue, 5 }  ,opacity=1 ,]  {$212$};
% Text Node
\draw (251,233.4) node [anchor=north west][inner sep=0.75pt]  [font=\fontsize{0.35em}{0.42em}\selectfont,color={rgb, 255:red, 65; green, 117; blue, 5 }  ,opacity=1 ,]  {$310$};
% Text Node
\draw (301,208.4) node [anchor=north west][inner sep=0.75pt]  [font=\fontsize{0.35em}{0.42em}\selectfont,color={rgb, 255:red, 65; green, 117; blue, 5 }  ,opacity=1 ,]  {$312$};
% Text Node
\draw (1,133.4) node [anchor=north west][inner sep=0.75pt]  [font=\fontsize{0.35em}{0.42em}\selectfont,color={rgb, 255:red, 65; green, 117; blue, 5 }  ,opacity=1 ,]  {$033$};
% Text Node
\draw (51,158.4) node [anchor=north west][inner sep=0.75pt]  [font=\fontsize{0.35em}{0.42em}\selectfont,color={rgb, 255:red, 65; green, 117; blue, 5 }  ,opacity=1 ,]  {$030$};
% Text Node
\draw (1,183.4) node [anchor=north west][inner sep=0.75pt]  [font=\fontsize{0.35em}{0.42em}\selectfont,color={rgb, 255:red, 208; green, 2; blue, 27 }  ,opacity=1 ,]  {$032$};
% Text Node
\draw (51,258.4) node [anchor=north west][inner sep=0.75pt]  [font=\fontsize{0.35em}{0.42em}\selectfont,color={rgb, 255:red, 65; green, 117; blue, 5 }  ,opacity=1 ,]  {$003$};
% Text Node
\draw (1,233.4) node [anchor=north west][inner sep=0.75pt]  [font=\fontsize{0.35em}{0.42em}\selectfont,color={rgb, 255:red, 74; green, 144; blue, 226 }  ,opacity=1 ,]  {$002$};
% Text Node
\draw (101,233.4) node [anchor=north west][inner sep=0.75pt]  [font=\fontsize{0.35em}{0.42em}\selectfont,color={rgb, 255:red, 208; green, 2; blue, 27 }  ,opacity=1 ,]  {$013$};
% Text Node
\draw (101,183.4) node [anchor=north west][inner sep=0.75pt]  [font=\fontsize{0.35em}{0.42em}\selectfont,color={rgb, 255:red, 74; green, 144; blue, 226 }  ,opacity=1 ,]  {$010$};
% Text Node
\draw (51,208.4) node [anchor=north west][inner sep=0.75pt]  [font=\fontsize{0.35em}{0.42em}\selectfont,color={rgb, 255:red, 208; green, 2; blue, 27 }  ,opacity=1 ,]  {$012$};
% Text Node
\draw (484,3.4) node [anchor=north west][inner sep=0.75pt]  [font=\fontsize{0.35em}{0.42em}\selectfont,color={rgb, 255:red, 65; green, 117; blue, 5 }  ,opacity=1 ,]  {$233$};
% Text Node
\draw (534,3.4) node [anchor=north west][inner sep=0.75pt]  [font=\fontsize{0.35em}{0.42em}\selectfont,color={rgb, 255:red, 65; green, 117; blue, 5 }  ,opacity=1 ,]  {$232$};
% Text Node
\draw (599.33,30.97) node [anchor=north west][inner sep=0.75pt]  [font=\fontsize{0.35em}{0.42em}\selectfont,color={rgb, 255:red, 65; green, 117; blue, 5 }  ,opacity=1 ,]  {$210$};
% Text Node
\draw (684,3.4) node [anchor=north west][inner sep=0.75pt]  [font=\fontsize{0.35em}{0.42em}\selectfont,color={rgb, 255:red, 74; green, 144; blue, 226 }  ,opacity=1 ,]  {$213$};
% Text Node
\draw (584,3.4) node [anchor=north west][inner sep=0.75pt]  [font=\fontsize{0.35em}{0.42em}\selectfont,color={rgb, 255:red, 65; green, 117; blue, 5 }  ,opacity=1 ,]  {$212$};
% Text Node
\draw (509.5,59.8) node [anchor=north west][inner sep=0.75pt]  [font=\fontsize{0.35em}{0.42em}\selectfont,color={rgb, 255:red, 65; green, 117; blue, 5 }  ,opacity=1 ,]  {$310$};
% Text Node
\draw (454.33,80.63) node [anchor=north west][inner sep=0.75pt]  [font=\fontsize{0.35em}{0.42em}\selectfont,color={rgb, 255:red, 65; green, 117; blue, 5 }  ,opacity=1 ,]  {$312$};
% Text Node
\draw (434,3.4) node [anchor=north west][inner sep=0.75pt]  [font=\fontsize{0.35em}{0.42em}\selectfont,color={rgb, 255:red, 65; green, 117; blue, 5 }  ,opacity=1 ,]  {$033$};
% Text Node
\draw (556,62.13) node [anchor=north west][inner sep=0.75pt]  [font=\fontsize{0.35em}{0.42em}\selectfont,color={rgb, 255:red, 65; green, 117; blue, 5 }  ,opacity=1 ,]  {$030$};
% Text Node
\draw (384,3.4) node [anchor=north west][inner sep=0.75pt]  [font=\fontsize{0.35em}{0.42em}\selectfont,color={rgb, 255:red, 208; green, 2; blue, 27 }  ,opacity=1 ,]  {$032$};
% Text Node
\draw (684,203.4) node [anchor=north west][inner sep=0.75pt]  [font=\fontsize{0.35em}{0.42em}\selectfont,color={rgb, 255:red, 74; green, 144; blue, 226 }  ,opacity=1 ,]  {$002$};
% Text Node
\draw (384,203.4) node [anchor=north west][inner sep=0.75pt]  [font=\fontsize{0.35em}{0.42em}\selectfont,color={rgb, 255:red, 208; green, 2; blue, 27 }  ,opacity=1 ,]  {$013$};
% Text Node
\draw (684,103.4) node [anchor=north west][inner sep=0.75pt]  [font=\fontsize{0.35em}{0.42em}\selectfont,color={rgb, 255:red, 74; green, 144; blue, 226 }  ,opacity=1 ,]  {$010$};
% Text Node
\draw (384,103.4) node [anchor=north west][inner sep=0.75pt]  [font=\fontsize{0.35em}{0.42em}\selectfont,color={rgb, 255:red, 208; green, 2; blue, 27 }  ,opacity=1 ,]  {$012$};
% Text Node
\draw (534,203.4) node [anchor=north west][inner sep=0.75pt]  [font=\fontsize{0.35em}{0.42em}\selectfont,color={rgb, 255:red, 65; green, 117; blue, 5 }  ,opacity=1 ,]  {$003$};
% Connection
\draw [color={rgb, 255:red, 0; green, 0; blue, 0 }  ,draw opacity=1 ]   (117,4) -- (149,4) ;
% Connection
\draw [color={rgb, 255:red, 0; green, 0; blue, 0 }  ,draw opacity=1 ]   (205.5,97) -- (186.5,59) ;
% Connection
\draw [color={rgb, 255:red, 0; green, 0; blue, 0 }  ,draw opacity=1 ]   (168,103) -- (199,103) ;
% Connection
\draw [color={rgb, 255:red, 0; green, 0; blue, 0 }  ,draw opacity=1 ]   (161.56,10) -- (180.44,47) ;
% Connection
\draw [color={rgb, 255:red, 0; green, 0; blue, 0 }  ,draw opacity=1 ]   (298,214.75) -- (267,230.25) ;
% Connection
\draw    (163,109) -- (253,229) ;
% Connection
\draw [color={rgb, 255:red, 0; green, 0; blue, 0 }  ,draw opacity=1 ]   (48,164.75) -- (17,180.25) ;
% Connection
\draw [color={rgb, 255:red, 0; green, 0; blue, 0 }  ,draw opacity=1 ]   (7.5,141) -- (7.5,179) ;
% Connection
\draw [color={rgb, 255:red, 0; green, 0; blue, 0 }  ,draw opacity=1 ]   (17,239.75) -- (48,255.25) ;
% Connection
\draw [color={rgb, 255:red, 0; green, 0; blue, 0 }  ,draw opacity=1 ]   (98,189.75) -- (67,205.25) ;
% Connection
\draw [color={rgb, 255:red, 0; green, 0; blue, 0 }  ,draw opacity=1 ]   (107.5,229) -- (107.5,191) ;
% Connection
\draw [color={rgb, 255:red, 0; green, 0; blue, 0 }  ,draw opacity=1 ]   (7.5,229) -- (7.5,191) ;
% Connection
\draw [color={rgb, 255:red, 0; green, 0; blue, 0 }  ,draw opacity=1 ]   (17,230.25) -- (48,214.75) ;
% Connection
\draw [color={rgb, 255:red, 0; green, 0; blue, 0 }  ,draw opacity=1 ]   (67,255.25) -- (98,239.75) ;
% Connection
\draw [color={rgb, 255:red, 0; green, 0; blue, 0 }  ,draw opacity=1 ]   (98,180.25) -- (67,164.75) ;
% Connection
\draw    (67,210) -- (298,210) ;
% Connection
\draw    (112.09,229) -- (203.91,109) ;
% Connection
\draw    (12.08,129) -- (102.92,10) ;
% Connection
\draw [color={rgb, 255:red, 0; green, 0; blue, 0 }  ,draw opacity=1 ]   (500,5) -- (531,5) ;
% Connection
\draw [color={rgb, 255:red, 0; green, 0; blue, 0 }  ,draw opacity=1 ]   (681,5) -- (600,5) ;
% Connection
\draw [color={rgb, 255:red, 0; green, 0; blue, 0 }  ,draw opacity=1 ]   (615.33,29.47) -- (681,8.09) ;
% Connection
\draw [color={rgb, 255:red, 0; green, 0; blue, 0 }  ,draw opacity=1 ]   (550,5) -- (581,5) ;
% Connection
\draw [color={rgb, 255:red, 0; green, 0; blue, 0 }  ,draw opacity=1 ]   (470.33,78.65) -- (506.5,64.99) ;
% Connection
\draw [line width=0.75]    (596.33,35.62) -- (525.5,58.35) ;
% Connection
\draw [color={rgb, 255:red, 0; green, 0; blue, 0 }  ,draw opacity=1 ]   (553,60.49) -- (400,8.24) ;
% Connection
\draw [color={rgb, 255:red, 0; green, 0; blue, 0 }  ,draw opacity=1 ]   (431,5) -- (400,5) ;
% Connection
\draw [color={rgb, 255:red, 0; green, 0; blue, 0 }  ,draw opacity=1 ][line width=0.75]    (681,105) -- (400,105) ;
% Connection
\draw [color={rgb, 255:red, 0; green, 0; blue, 0 }  ,draw opacity=1 ][line width=0.75]    (400,201.83) -- (681,108.17) ;
% Connection
\draw [color={rgb, 255:red, 0; green, 0; blue, 0 }  ,draw opacity=1 ][line width=0.75]    (681.5,199) -- (399.5,11) ;
% Connection
\draw [color={rgb, 255:red, 0; green, 0; blue, 0 }  ,draw opacity=1 ][line width=0.75]    (681,201.83) -- (400,108.17) ;
% Connection
\draw [color={rgb, 255:red, 0; green, 0; blue, 0 }  ,draw opacity=1 ]   (681,101.94) -- (572,66.8) ;
% Connection
\draw [line width=0.75]    (400,101.93) -- (451.33,85.31) ;
% Connection
\draw [line width=0.75]    (399.5,199) -- (681.5,11) ;
% Connection
\draw    (450,5) -- (481,5) ;
% Connection
\draw    (400,205) -- (531,205) ;
% Connection
\draw    (550,205) -- (681,205) ;

\end{tikzpicture}

}
     \caption{Two embeddings of a subdivision of $K_{3,3}$ contained in $\widetilde{H}_4^{3}$.}
    \label{fig:planarity_prooft}
\end{figure}

\subsection*{Connectivity}

Many properties of the parity-constrained Hanoi graphs $\widetilde{H}_4^{n}$ can be deduced from their recursive structure, such as their connectedness or, more precisely, their $2$-connectedness.

\begin{theorem}[Vertex connectivity]
\label{theo:connectivity}
For all $n \ge 1$, the vertex connectivity of $\widetilde{H}_4^{n}$ satisfies

\begin{equation}
    \kappa(\widetilde{H}_4^{n}) = 2.
\end{equation}

\end{theorem}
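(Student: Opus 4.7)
The plan is to establish $\kappa(P^{n}) = 2$ via matching upper and lower bounds. The upper bound $\kappa(P^{n}) \leq 2$ is immediate from the standard inequality $\kappa(G) \leq \delta(G)$ combined with Proposition~\ref{prop:degrees}, which gives $\delta(P^{n}) = 2$. Concretely, any perfect state $i^{n}$ with $i \in \{0,3\}$ has only disc $1$ movable, with exactly two legal destinations under the parity restriction; deleting those two neighbors isolates the perfect state and witnesses $\kappa(P^{n}) \leq 2$. Hence the real work reduces to proving that $P^{n}$ is $2$-connected, i.e., that $P^{n} - v$ is connected for every vertex $v$.

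For the lower bound I would argue by induction on $n$. The base case $n=1$ is handled directly, since $P^{1}$ is the $3$-cycle on $\{0,2,3\}$ and is therefore $2$-connected. For the inductive step I would exploit the recursive decomposition $P^{n} \cong 0P^{n-1} \cup p(n)P^{n-1} \cup 3P^{n-1}$ established earlier in this section. Given any $v \in V(P^{n})$, I would locate it in its copy $iP^{n-1}$. Since each copy is isomorphic to $P^{n-1}$ and hence $2$-connected by hypothesis, the subgraph $iP^{n-1} - v$ is connected, while the other two copies are entirely untouched and remain $2$-connected.

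The remaining task is to verify that the three pieces $iP^{n-1}-v$, $jP^{n-1}$, $kP^{n-1}$ (with $\{i,j,k\}=Q^{n}$) are still interconnected by surviving bridges, i.e., by moves of disc $n$. Using the bridge counts from the proof of Proposition~\ref{prop:edge_recurrence}, there are $2^{\lfloor n/2 \rfloor} \geq 2$ bridges (for $n \geq 2$) between $p(n)$ and each of $0$, $3$, and a single bridge between $0$ and $3$. Since any state admits at most two moves of disc $n$ (its two parity-allowed destinations), removing $v$ kills at most two bridges in total. A short case analysis on $i\in\{0,p(n),3\}$ then shows that at least one bridge leaving $iP^{n-1}-v$ survives, while the two intact copies remain linked via their own bridges; hence $P^{n}-v$ is connected.

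The main obstacle will lie in this last step, specifically in the worst case where $i \in \{0,3\}$ and $v$ happens to be the common endpoint of the unique $0$–$3$ bridge together with a $0$–$p(n)$ (or $3$–$p(n)$) bridge. Even there, however, the remaining $2^{\lfloor n/2 \rfloor} - 1 \geq 1$ bridges between $i$ and $p(n)P^{n-1}$ guarantee a link from $iP^{n-1}-v$ into the rest of the graph, and the inductive hypothesis closes the argument. Combining the upper and lower bounds then yields $\kappa(P^{n}) = 2$ for every $n \geq 1$.
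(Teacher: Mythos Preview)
Your proposal is correct and follows essentially the same route as the paper: an upper bound from the degree-$2$ perfect states, and a lower bound by induction through the recursive decomposition $P^{n}\cong 0P^{n-1}\cup p(n)P^{n-1}\cup 3P^{n-1}$ together with a bridge-survival argument for moves of disc $n$. Your version is in fact a bit tighter—you invoke $\kappa\le\delta$ directly and use the exact bridge multiplicities from Proposition~\ref{prop:edge_recurrence} to handle the case where the deleted vertex is incident to two bridges, a case the paper glosses over.
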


\begin{proof}

Any perfect-state vertex has degree $2$, since only disc $1$ can move from such a state,
and disc $1$ has exactly two allowed destinations. Hence, $\kappa(\widetilde{H}_4^n)\le \delta(\widetilde{H}_4^n)= 2$.

To show that the deletion of only one vertex is not sufficient to disconnect the graph, we proceed by induction on~$n$. For $n=1$, the graph $\widetilde{H}_4^1 \cong K_3$, which is $2$-connected.

Assume now that $\widetilde{H}_4^{n-1}$ is $2$-connected for some $n \ge 2$. 
In the recursive construction of $\widetilde{H}_4^n$, the graph is composed of three copies 
$0\widetilde{H}_4^{n-1}$, $p(n)\widetilde{H}_4^{n-1}$, and $3\widetilde{H}_4^{n-1}$, which are interconnected by edges 
corresponding to moves of the largest disc $n$, called a \textit{bridge}.

Removing a single vertex that has all its neighbors within the same copy of $\widetilde{H}_4^{n-1}$ does not disconnect that copy by the induction hypothesis. 
Thus, it remains to consider the removal of a \emph{frontier vertex}, i.e., a vertex that has at least one neighbor in another copy.

Between any two adjacent copies, say $0\widetilde{H}_4^{n-1}$ and $p(n)\widetilde{H}_4^{n-1}$, there exist at least two bridges:  
one where peg $\overline{p}(n)$ is empty, and another where peg $\overline{p}(n)$ is not empty 
(the only exception is the case $n=1$, which has already been handled as the base case). 
A similar argument applies to the pair $p(n)\widetilde{H}_4^{n-1}$ and $3\widetilde{H}_4^{n-1}$.

Consequently, removing a frontier vertex corresponds to deleting exactly one bridge, which does not disconnect the three copies. Hence, the graph $\widetilde{H}_4^{n}$ remains connected after the removal of any single vertex.

Therefore, no single vertex removal can disconnect the graph, and we conclude that $\kappa(\widetilde{H}_4^n) \ge 2$.
Combining both inequalities yields $\kappa(\widetilde{H}_4^n) = 2$.\qedhere
\end{proof}

\begin{theorem}[Edge connectivity]
\label{theo:edgeconnectivity}
For all $n \ge 1$, the edge connectivity of $\widetilde{H}_4^{n}$ satisfies
\begin{equation}
    \lambda(\widetilde{H}_4^{n}) = 2.
\end{equation}
\end{theorem}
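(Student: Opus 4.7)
The plan is to deduce both bounds $\lambda(P^n)\le 2$ and $\lambda(P^n)\ge 2$ essentially for free from results already in the paper, via Whitney's inequality
\[
\kappa(G)\;\le\;\lambda(G)\;\le\;\delta(G).
\]
First I would recall that by Proposition~\ref{prop:degrees} we have $\delta(P^n)=2$ for every $n\ge 1$, because each perfect state admits only two legal moves of disc~$1$. Since the edge connectivity is bounded above by the minimum degree, this immediately gives $\lambda(P^n)\le 2$; equivalently, the two edges incident to a perfect-state vertex form an edge cut isolating that vertex.

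For the lower bound I would invoke Theorem~\ref{theo:connectivity}, which establishes $\kappa(P^n)=2$. By Whitney's inequality $\kappa(P^n)\le\lambda(P^n)$, so $\lambda(P^n)\ge 2$. Combining the two bounds yields $\lambda(P^n)=2$.

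If a self-contained argument avoiding Whitney is preferred, I would instead mirror the inductive scheme used for $\kappa$: observing that $P^1\cong K_3$ has $\lambda=2$, and assuming $\lambda(P^{n-1})\ge 2$ for the inductive step. The graph $P^n$ decomposes into three copies of $P^{n-1}$ joined by bridge edges corresponding to moves of disc~$n$. Removing a single edge internal to one copy cannot disconnect it by the induction hypothesis, and removing a single bridge edge still leaves the three copies mutually connected, because between any two adjacent copies of $P^{n-1}$ there exist at least two bridges (one with peg $\overline{p}(n)$ empty and one with it occupied), exactly as in the proof of Theorem~\ref{theo:connectivity}. Hence no single edge removal disconnects $P^n$, so $\lambda(P^n)\ge 2$. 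The main (and only) subtle point is the case $n=2$, where the bridge counts between adjacent copies are smallest; here I would just verify disconnection-resistance from the explicit picture of $P^2$ in Figure~\ref{fig:graphs_123}. Given that Whitney's inequality is standard, I would favor the first, one-line proof.
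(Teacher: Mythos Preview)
Your primary proof is correct and is essentially identical to the paper's: the paper also invokes $\delta(P^n)=2$ from Proposition~\ref{prop:degrees} and $\kappa(P^n)=2$ from Theorem~\ref{theo:connectivity}, then sandwiches $\lambda(P^n)$ between them via Whitney's inequality. Your alternative inductive argument is unnecessary here but sound as a backup.
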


\begin{proof}
From Proposition~\ref{prop:degrees}, we know that $\delta(\widetilde{H}_4^{n}) = 2$, 
and by Theorem~\ref{theo:connectivity}, we have $\kappa(\widetilde{H}_4^{n}) = 2$.
Since, for any graph $G$, it holds that
\[
\kappa(G) \le \lambda(G) \le \delta(G),
\]
we obtain
\[
2 = \kappa(\widetilde{H}_4^{n}) \le \lambda(\widetilde{H}_4^{n}) \le \delta(\widetilde{H}_4^{n}) = 2.
\]
Hence, $\lambda(\widetilde{H}_4^{n}) = 2$.\qedhere
\end{proof}

Since $\kappa(\widetilde{H}_4^{n}) = \lambda(\widetilde{H}_4^{n}) = \delta(\widetilde{H}_4^{n})$, we may state the following result.

\begin{proposition}
For all $n \ge 1$, the parity-constrained Hanoi graph $\widetilde{H}_4^{n}$ is maximally connected; that is, it is both maximally vertex-connected and maximally edge-connected.
\end{proposition}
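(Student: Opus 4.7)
The plan is to observe that this proposition is a direct corollary of the three preceding results and does not require any new combinatorial argument. Recall that a graph $G$ is called \emph{maximally vertex-connected} when $\kappa(G) = \delta(G)$, and \emph{maximally edge-connected} when $\lambda(G) = \delta(G)$. Thus the task reduces to verifying both equalities for $P^{n}$.

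First, I would invoke Proposition~\ref{prop:degrees} to record $\delta(P^{n}) = 2$ for all $n \geq 1$. Next, I would cite Theorem~\ref{theo:connectivity} to obtain $\kappa(P^{n}) = 2$, and Theorem~\ref{theo:edgeconnectivity} to obtain $\lambda(P^{n}) = 2$. Chaining these three identities immediately yields
\[
\kappa(P^{n}) \;=\; \lambda(P^{n}) \;=\; \delta(P^{n}) \;=\; 2,
\]
which establishes both maximality properties simultaneously.

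There is essentially no obstacle here: the content of the proposition has already been absorbed into the preceding theorems, and the only step is to match the terminology ``maximally connected'' to the chain of equalities above. If one wished to expand the exposition slightly, one could also remark that since $\kappa(G) \leq \lambda(G) \leq \delta(G)$ always holds, \emph{any} graph attaining $\kappa = \delta$ automatically satisfies $\lambda = \delta$; hence the edge-connectivity conclusion is even formally redundant once the vertex-connectivity one is known. This would make the statement read as a unified ``maximal connectivity'' corollary without duplicating the inequality chain already used in the proof of Theorem~\ref{theo:edgeconnectivity}.
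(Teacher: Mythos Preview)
Your proposal is correct and matches the paper's own treatment exactly: the paper simply observes that $\kappa(P^{n}) = \lambda(P^{n}) = \delta(P^{n})$ (from Proposition~\ref{prop:degrees}, Theorem~\ref{theo:connectivity}, and Theorem~\ref{theo:edgeconnectivity}) and states the proposition without further proof. Your additional remark that $\kappa = \delta$ alone forces $\lambda = \delta$ via the standard inequality chain is a valid and slightly sharper observation, but otherwise the arguments are identical.
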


\subsection*{Subgraph relations to Hanoi graphs}

Theorem~\ref{theo:containment} establishes the subgraph relationships between $\widetilde{H}_4^{n}$ and the classical Hanoi graphs on three and four pegs.

\begin{theorem}[Subgraph inclusion]
\label{theo:containment}
For all $n \ge 0$,

\begin{equation}
    H^{\left\lceil \frac{n}{2} \right\rceil}_{3} \;\subseteq\; \widetilde{H}_4^{n} \;\subseteq\; H^{n}_{4}.
\end{equation}

\end{theorem}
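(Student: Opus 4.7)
The plan is to prove the two inclusions separately, and to treat the easy one first. For the right inclusion $P^n \subseteq H_{4}^{n}$, I would argue directly from the definitions: $V(P^n)$ is by construction the subset of $V(H_{4}^{n}) = \{0,1,2,3\}^n$ consisting of words compatible with the parity constraint, and the adjacency rule of $P^n$ combines the classical rule (a) with the additional parity rule (b). Hence every edge of $P^n$ is also an edge of $H_{4}^{n}$, which settles this inclusion immediately.

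For the left inclusion $H_{3}^{\lceil n/2 \rceil} \subseteq P^n$, the plan is to exhibit an explicit subgraph embedding $\phi : H_{3}^{m} \to P^n$, where $m := \lceil n/2 \rceil$ is exactly the number of odd-labelled discs in $[n]$. The key idea is to freeze every even disc on peg $E$ and let only the $m$ odd discs move freely over the three pegs $\{N_1, O, N_2\}$. After identifying those three pegs with the three pegs of the classical three-peg puzzle (say $N_1 \leftrightarrow 0$, $O \leftrightarrow 2$, $N_2 \leftrightarrow 3$), I would define $\phi$ on vertices by sending a word $t_m \cdots t_1 \in \{0,2,3\}^m$ to the state $s \in V(P^n)$ with $s_{2k-1} = t_k$ for $1 \le k \le m$ and $s_{2j} = 1 = E$ for every even index $2j \le n$. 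Parity-compatibility is built into this definition, so $\phi$ is a well-defined injection into $V(P^n)$.

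The main technical step, and essentially the only part requiring care, is to verify that $\phi$ maps edges to edges. An edge of $H_{3}^{m}$ shifts a single slot $t_k$ between two pegs $i, j \in \{0,2,3\}$ with no smaller slot lying on either; under $\phi$ this corresponds to moving the odd disc $2k-1$ of $P^n$ between pegs $i$ and $j$. The parity rule (b) of Section~\ref{sec:problem} is automatic, since $\{i,j\} \subseteq \{N_1, O, N_2\}$ is the set of odd-allowed pegs. The classical rule (a) holds because any strictly smaller disc is either an odd disc of index less than $k$, which by legality in $H_{3}^{m}$ avoids $\{i,j\}$, or an even disc, which by construction sits on peg $E = 1$ and is therefore not on any peg in $\{0,2,3\}$. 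Thus $\phi$ is an injective graph homomorphism onto a subgraph of $P^n$ isomorphic to $H_{3}^{m}$.

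I do not expect any substantive obstacle; the only delicate point is the notational bookkeeping around the peg-identification and the odd-disc indexing $d = 2k-1$. It is also worth remarking that a completely analogous construction (freezing odd discs on $O$ and moving even discs) would yield $H_{3}^{\lfloor n/2 \rfloor} \subseteq P^n$, so the version with $\lceil n/2 \rceil$ is the stronger of the two natural lower embeddings.
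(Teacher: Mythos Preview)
Your proposal is correct and follows essentially the same approach as the paper: the right inclusion is obtained by observing that $P^n$ arises from $H_4^n$ by deleting parity-violating states and edges, while the left inclusion is obtained by freezing all discs of one parity on their parity peg and letting the remaining $\lceil n/2\rceil$ discs move freely among the three admissible pegs. Your write-up is in fact more explicit than the paper's (you construct the embedding $\phi$ and verify edge-preservation in detail), and your uniform choice of the odd discs works for all $n$ since $|[n]_1| = \lceil n/2\rceil$; the paper phrases this as taking the ``majority parity'' but the content is the same.
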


\begin{proof}
The right inclusion is immediate: $\widetilde{H}_4^{n}$ is obtained from $H^{n}_{4}$ by deleting states
and edges that violate parity constraints, hence $\widetilde{H}_4^{n}$ is a subgraph of $H^{n}_{4}$.

For the left inclusion, fix the majority parity among $\{1,\dots,n\}$; there are
$\left\lceil \frac{n}{2} \right\rceil$ discs of that parity. These discs may move freely among the three pegs
$\{0,3,p\}$ with $p\in\{1,2\}$ matching that parity. The induced subgraph on these discs
(with the others kept fixed) is isomorphic to $H^{\left\lceil \frac{n}{2} \right\rceil}_{3}$.\qedhere
\end{proof}

\begin{corollary}[Largest embedded 3-peg sub-Hanoi graph]
\label{cor:largestsub}
For $n \ge 1$, a largest subgraph of $\widetilde{H}_4^{n}$ isomorphic to $H^{\left\lceil \frac{n}{2} \right\rceil}_{3}$ is given by

\begin{equation}
    V\!\left(H^{\left\lceil \frac{n}{2} \right\rceil}_{3}\right)
=
\begin{cases}
\{\, s \in V(\widetilde{H}_4^{n}) \mid s_{d}=2 \text{ for all odd } d \,\},  & \text{$n$ even}\\[4pt]
\{\, s \in V(\widetilde{H}_4^{n}) \mid s_{d}=1 \text{ for all even } d \,\}, & \text{$n$ odd},.
\end{cases}
\end{equation}

\end{corollary}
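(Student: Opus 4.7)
The plan is to prove the corollary in two steps: first, verify that the described vertex set induces a subgraph of $P^{n}$ isomorphic to $H_{3}^{\lceil n/2 \rceil}$, thereby concretely realizing the embedding asserted by Theorem~\ref{theo:containment}; second, show that $\lceil n/2 \rceil$ is the largest exponent for which $H_{3}^{m}$ embeds in $P^{n}$, so that the specified subgraph is indeed a largest one.

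For the first step I would treat the two parity cases uniformly by freezing every disc whose parity is opposite to that of $n$ (so odd discs on peg $O=2$ when $n$ is even, even discs on peg $E=1$ when $n$ is odd) on its sole parity peg. This produces a valid state because the frozen discs, listed in decreasing size from bottom to top, automatically form an increasing stack on a single peg. The remaining $\lceil n/2 \rceil$ mobile discs are all of the complementary parity and each is independently free to occupy any of the three pegs available to that parity, giving exactly $3^{\lceil n/2 \rceil}$ states, which matches $|V(H_{3}^{\lceil n/2 \rceil})|$. The crucial observation is that the frozen discs live on a peg that every mobile disc is forbidden from visiting, so they can never obstruct a move of a mobile disc; hence the edges inside the induced subgraph correspond precisely to the legal moves of the mobile discs among their three common pegs, which are the edges of $H_{3}^{\lceil n/2 \rceil}$. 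Recording for each state the pegs of its mobile discs then defines the desired graph isomorphism.

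The second step is the main obstacle. The plan is to show that in any induced copy of $H_{3}^{m}$ sitting inside $P^{n}$, the $m$ discs whose positions are not constant throughout the copy must all admit a single common triple of pegs on which they move freely. Given the parity restrictions in $P$, the only such triples are $\{N_{1}, E, N_{2}\}$, admissible only for the $\lfloor n/2 \rfloor$ even discs, and $\{N_{1}, O, N_{2}\}$, admissible only for the $\lceil n/2 \rceil$ odd discs; either way this forces $m \le \lceil n/2 \rceil$. The delicate point is justifying that the three pegs of $H_{3}^{m}$ really do correspond to a single parity-admissible triple in $P^{n}$ for every mobile disc. The intended argument exploits the $S_{3}$ symmetry of $H_{3}^{m}$ permuting its pegs together with its recursive decomposition into three copies of $H_{3}^{m-1}$ meeting only at degree-$2$ corners: tracing where the three corners of the embedded copy sit inside $P^{n}$ pins down the ambient triple of pegs it uses, and an induction on $m$ then forces every disc that varies between corners to be admissible on all three of them.
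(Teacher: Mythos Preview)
Your Step~1 is correct and is essentially what the paper does: the proof of Theorem~\ref{theo:containment} already constructs this very copy of $H_{3}^{\lceil n/2\rceil}$ by freezing the minority-parity discs on their parity peg and letting the majority-parity discs range over $\{0,3,p\}$. The corollary is stated in the paper without its own proof; it merely names the vertex set explicitly.

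Your Step~2 goes beyond anything the paper argues. The paper asserts the word ``largest'' but nowhere proves that no $H_{3}^{m}$ with $m>\lceil n/2\rceil$ embeds in $P^{n}$; it is treated as evident. So you are attempting more than the paper does, which is commendable, but the sketch you give is not yet a proof. The crux, as you yourself flag, is showing that an arbitrary \emph{graph} embedding of $H_{3}^{m}$ into $P^{n}$ must be ``structural'', i.e.\ that the $3^{m}$ image vertices arise by varying some fixed set of discs over a single common triple of pegs. Your proposed route via the three corner vertices and the $S_{3}$-symmetry of $H_{3}^{m}$ does not obviously work: the corners have degree~$2$ inside $H_{3}^{m}$, but as vertices of an \emph{induced} subgraph of $P^{n}$ they may have higher ambient degree, so their images are not forced to be special states of $P^{n}$, and nothing immediately ``pins down the ambient triple of pegs''. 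The induction on $m$ you allude to would need a concrete invariant to carry; as stated it is a hope rather than an argument. A cleaner line, if you want to pursue maximality rigorously, is to work with vertex counts and the recursive cut structure: any induced $H_{3}^{m}$ has exactly three edges across each top-level tripartition, and comparing this to the bridge counts in Proposition~\ref{prop:edge_recurrence} constrains how the copy can sit relative to the decomposition $P^{n}=0P^{n-1}\cup p(n)P^{n-1}\cup 3P^{n-1}$.
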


Figure~\ref{fig:subHanoiGraphinP3} illustrates the largest Hanoi subgraph of $\widetilde{H}_4^{3}$ that is isomorphic to $H_{3}^{2}$.

\begin{figure}[ht]
    \centering

\tikzset{every picture/.style={line width=0.75pt}} %set default line width to 0.75pt        

{

\begin{tikzpicture}[x=0.75pt,y=0.75pt,yscale=-1,xscale=1]
%uncomment if require: \path (0,245); %set diagram left start at 0, and has height of 245

% Text Node
\draw (278,72.4) node [anchor=north west][inner sep=0.75pt]  [font=\fontsize{0.35em}{0.42em}\selectfont,]  {$210$};
% Text Node
\draw (328,72.4) node [anchor=north west][inner sep=0.75pt]  [font=\fontsize{0.35em}{0.42em}\selectfont,]  {$213$};
% Text Node
\draw (303,22.4) node [anchor=north west][inner sep=0.75pt]  [font=\fontsize{0.35em}{0.42em}\selectfont,]  {$212$};
% Text Node
\draw (377,204.4) node [anchor=north west][inner sep=0.75pt]  [font=\fontsize{0.35em}{0.42em}\selectfont,]  {$310$};
% Text Node
\draw (377,154.4) node [anchor=north west][inner sep=0.75pt]  [font=\fontsize{0.35em}{0.42em}\selectfont,]  {$313$};
% Text Node
\draw (427,179.4) node [anchor=north west][inner sep=0.75pt]  [font=\fontsize{0.35em}{0.42em}\selectfont,]  {$312$};
% Text Node
\draw (227,204.4) node [anchor=north west][inner sep=0.75pt]  [font=\fontsize{0.35em}{0.42em}\selectfont,]  {$013$};
% Text Node
\draw (227,154.4) node [anchor=north west][inner sep=0.75pt]  [font=\fontsize{0.35em}{0.42em}\selectfont,]  {$010$};
% Text Node
\draw (177,179.4) node [anchor=north west][inner sep=0.75pt]  [font=\fontsize{0.35em}{0.42em}\selectfont,]  {$012$};
% Connection
\draw [color={rgb, 255:red, 0; green, 0; blue, 0 }  ,draw opacity=1 ]   (331.25,68) -- (311.75,29) ;
% Connection
\draw [color={rgb, 255:red, 0; green, 0; blue, 0 }  ,draw opacity=1 ]   (306.25,29) -- (286.75,68) ;
% Connection
\draw [color={rgb, 255:red, 0; green, 0; blue, 0 }  ,draw opacity=1 ]   (293,73.5) -- (325,73.5) ;
% Connection
\draw [color={rgb, 255:red, 0; green, 0; blue, 0 }  ,draw opacity=1 ]   (392,160) -- (424,176) ;
% Connection
\draw [color={rgb, 255:red, 0; green, 0; blue, 0 }  ,draw opacity=1 ]   (424,185) -- (392,201) ;
% Connection
\draw [color={rgb, 255:red, 0; green, 0; blue, 0 }  ,draw opacity=1 ]   (383,200) -- (383,161) ;
% Connection
\draw [color={rgb, 255:red, 0; green, 0; blue, 0 }  ,draw opacity=1 ]   (288.13,79) -- (378.88,200) ;
% Connection
\draw [color={rgb, 255:red, 0; green, 0; blue, 0 }  ,draw opacity=1 ]   (224,160) -- (192,176) ;
% Connection
\draw [color={rgb, 255:red, 0; green, 0; blue, 0 }  ,draw opacity=1 ]   (192,185) -- (224,201) ;
% Connection
\draw [color={rgb, 255:red, 0; green, 0; blue, 0 }  ,draw opacity=1 ]   (233,200) -- (233,161) ;
% Connection
\draw [color={rgb, 255:red, 0; green, 0; blue, 0 }  ,draw opacity=1 ]   (192,180.5) -- (424,180.5) ;
% Connection
\draw [color={rgb, 255:red, 0; green, 0; blue, 0 }  ,draw opacity=1 ]   (237.21,200) -- (329.79,79) ;

\end{tikzpicture}

}
\caption{Largest Hanoi subgraph (isomorphic to $H^{\lceil n/2\rceil}_3$) inside $\widetilde{H}_4^3$.}
    \label{fig:subHanoiGraphinP3}
\end{figure}
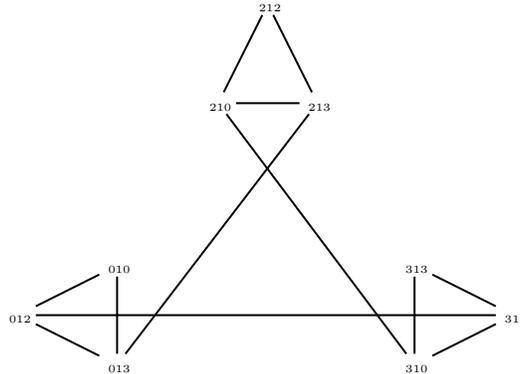

\begin{proposition}[Disconnection after removal]
\label{prop:disconexion}
If $n \ge 4$, then the graph $\widetilde{H}_4^{n} \setminus H^{\left\lceil \frac{n}{2} \right\rceil}_{3}$ is disconnected; otherwise, it is connected.

\end{proposition}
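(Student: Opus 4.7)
The plan is to split the proof into two regimes: direct enumeration for $n \le 3$, and a recursive argument for $n \ge 4$ based on the decomposition $P^{n} = 0P^{n-1} \cup p(n)P^{n-1} \cup 3P^{n-1}$ together with the structural description of $H^{\lceil n/2 \rceil}_{3}$ given in Corollary \ref{cor:largestsub}.

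For $n \in \{0, 1\}$, $V(H^{\lceil n/2 \rceil}_{3})$ exhausts $V(P^{n})$, so the set difference is empty and trivially connected. For $n = 2$ the complement has six vertices and forms a path, as is already visible in Figure \ref{fig:graphs_123}. For $n = 3$ the complement has eighteen vertices, and a short breadth-first search over the legal moves shows it to be connected.

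For the main direction $n \ge 4$, I would assume $n$ even (the odd case is symmetric after swapping pegs $1$ and $2$). By Corollary \ref{cor:largestsub}, $V(H^{n/2}_{3})$ is the set of states in which every odd-labelled disc sits on peg $2$. The key observation, read directly from the bridge count in the proof of Proposition \ref{prop:edge_recurrence}, is that the unique disc-$n$ bridge between pegs $0$ and $3$ demands the fully parity-separated configuration of all smaller discs (even discs on peg $1$, odd discs on peg $2$), which is exactly what defines a vertex of $H^{n/2}_{3}$. Removing $V(H^{n/2}_{3})$ therefore destroys every direct disc-$n$ edge between the outer sub-copies $0P^{n-1}$ and $3P^{n-1}$, so any path joining them in the residual graph must route through $p(n)P^{n-1}$.

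To conclude, the final step is to show that inside $p(n)P^{n-1}$, once its intersection with $H^{n/2}_{3}$ is removed, the remaining disc-$n$ bridge vertices pointing towards $0P^{n-1}$ and those pointing towards $3P^{n-1}$ fall into different connected components. The natural strategy is an invariant argument: exhibit a quantity on $V(P^{n}) \setminus V(H^{n/2}_{3})$, for instance a parity-based count of odd discs on pegs $0$ versus $3$, that is preserved by every legal move avoiding $H^{n/2}_{3}$ and that distinguishes the two bridge-vertex sets. The main difficulty I anticipate is designing and rigorously verifying this invariant, which likely requires an auxiliary induction that exploits the recursive sub-copy of $H^{n/2}_{3}$ inside each $iP^{n-1}$ and rules out indirect reroutings through clever sequences of smaller-disc moves.
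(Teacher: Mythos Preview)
Your outline stops exactly where the real work begins, and the missing step cannot be completed along the lines you sketch. The invariant you hope for would have to separate, inside $p(n)P^{n-1}\setminus H^{\lceil n/2\rceil}_{3}$, the surviving bridge vertices pointing towards $0P^{n-1}$ from those pointing towards $3P^{n-1}$. But already for $n=4$ (so $p(4)=1$, and by Corollary~\ref{cor:largestsub} the removed vertices are precisely those with $s_{1}=s_{3}=2$) these two families are joined: the bridge vertex $1333$ (adjacent to $0333$) and the bridge vertex $1000$ (adjacent to $3000$) are connected inside $1P^{3}$ by
\[
1333 \to 1330 \to 1310 \to 1312 \to 1012 \to 1010 \to 1013 \to 1003 \to 1000,
\]
and no state on this path has both odd discs on peg~$2$. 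Hence no invariant of the kind you propose can exist.

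Worse, combining this path with the surviving bridges $\{0333,1333\}$ and $\{1000,3000\}$ and the (routine) internal connectivity of each $iP^{3}\setminus H^{2}_{3}$ shows that $P^{4}\setminus H^{2}_{3}$ is \emph{connected}, contrary to the statement. The paper's own argument takes a different tack---it asserts that after the removal ``odd discs cannot use peg~$2$'' and then claims the six states $s000,\,s003,\,s013,\,s010,\,s030,\,s033$ form an isolated path---but this conflates ``\emph{all} odd discs on peg~$2$'' (the actual defining condition of $H^{n/2}_{3}$) with ``\emph{some} odd disc on peg~$2$''; for instance $s000$ is adjacent to $s002$, and $s002\notin H^{n/2}_{3}$ since disc~$3$ sits on peg~$0$. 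So, under the vertex set specified in Corollary~\ref{cor:largestsub}, the proposition appears to be false at $n=4$, and neither your approach nor the paper's establishes it.
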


\begin{proof}
The claim is trivial for $n=0$ and can be checked directly for $n=1,2,3$ (see Figure \ref{fig:graphs_123}).
Assume $n \ge 4$. By Corollary~\ref{cor:largestsub}, removing $H^{\left\lceil \frac{n}{2} \right\rceil}_{3}$
forbids all states in which any disc of the majority parity sits on its parity peg.
Consequently, such discs can only move between the two neutral pegs $\{0,3\}$ in the remaining graph.

For $n$ even, fix any $(n-3)$-prefix $s$ and consider the six states
\[
s000,\ s003,\ s013,\ s010,\ s030,\ s033 .
\]
In $\widetilde{H}_4^{n}\setminus H^{\left\lceil \frac{n}{2} \right\rceil}_{3}$ (with $n$ even, odd discs cannot use peg $2$), these
form an induced path with no admissible continuation. Hence the graph is disconnected.

For $n$ odd, a similar argument uses parity reversal: even discs cannot use peg $1$,
so there is no path connecting $s0000$ to $s3333$ once $H^{\left\lceil \frac{n}{2} \right\rceil}_{3}$ is removed,
as moving  disc $4$ requires two intermediate stacks, stack including only disc $2$ on peg $1$, and another including discs $1$ and $3$ on peg $2$, which is impossible, sine disc $2$  is forbidden to use peg $1$.
Thus $\widetilde{H}_4^{n}\setminus H^{\left\lceil \frac{n}{2} \right\rceil}_{3}$ is disconnected for all $n \ge 4$.\qedhere
\end{proof}

\begin{proposition}[Number of largest sub-Hanoi graphs in $\widetilde{H}_4^n$]
For $n$ even, the graph $\widetilde{H}_4^{n}$ contains two distinct subgraphs isomorphic to $H^{\frac{n}{2}}_{3}$, each corresponding to configurations where exactly $\frac{n}{2}$ discs (either all even or all odd) are free to move under unconstrained three-peg rules. 

For $n$ odd, the graph $\widetilde{H}_4^{n}$ contains one subgraph isomorphic to $H^{\frac{n+1}{2}}_{3}$, corresponding to states where all even discs are placed on their parity peg $1$, and another subgraph isomorphic to $H^{\frac{n-1}{2}}_{3}$, corresponding to states where all odd discs are placed on their parity peg $2$.

In both cases, these subgraphs share a unique common vertex, namely the configuration in which all discs are placed on their respective parity pegs and are fully separated by parity.
\end{proposition}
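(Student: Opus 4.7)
The plan is to build the two sub-Hanoi graphs by a symmetric parity-fixing construction and then identify their common vertex as the intersection of the two fixing conditions. The engine of the argument is a \emph{non-interference principle}: if every disc of one parity class is stacked on its own parity peg, then the resulting sub-tower is invisible to the remaining (opposite-parity) discs, since the parity rule~(b) of Section~\ref{sec:problem} forbids the latter from ever entering that peg. Consequently, the free discs move among the three pegs $\{0,3\}$ together with their own parity peg exactly as if they formed an unconstrained three-peg puzzle, and the subgraph of $P^n$ induced on the corresponding states is isomorphic to $H_{3}^{k}$, where $k$ is the number of free discs.

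I would then apply this principle to two symmetric fixings. Call \emph{Construction~A} the choice $s_{d}=2$ for every odd $d$; this freezes all odd discs on peg~$O$ and frees the $\lfloor n/2\rfloor$ even discs on $\{0,1,3\}$, yielding a copy of $H_{3}^{\lfloor n/2\rfloor}$ (exactly the construction already used in Corollary~\ref{cor:largestsub} for $n$ even). Call \emph{Construction~B} the choice $s_{d}=1$ for every even $d$; this freezes all even discs on peg~$E$ and frees the $\lceil n/2\rceil$ odd discs on $\{0,2,3\}$, yielding a copy of $H_{3}^{\lceil n/2\rceil}$. For $n$ even both counts equal $n/2$ and the two constructions give two \emph{distinct} copies of $H_{3}^{n/2}$, since their constraint sets differ. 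For $n$ odd they give the two sizes $\tfrac{n+1}{2}$ (Construction~B) and $\tfrac{n-1}{2}$ (Construction~A) claimed in the statement.

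For the common vertex, a state lies in both subgraphs if and only if it satisfies \emph{both} fixings, that is, $s_{d}=2$ for every odd $d$ \emph{and} $s_{d}=1$ for every even $d$. Every coordinate is then prescribed, so this yields the unique state $(\varnothing,[n]_{0},[n]_{1},\varnothing)$, which is precisely the fully parity-separated configuration. The only genuinely delicate point---and the step I expect to be the main (though small) obstacle---is verifying that each construction gives a true induced-subgraph isomorphism onto $H_{3}^{k}$, not merely a vertex-set bijection: one has to check that every legal move of a free disc in $P^{n}$ translates to a legal three-peg move and conversely, which follows because the fixed sub-tower sits entirely on a peg that is parity-forbidden to the free discs and therefore never blocks any move. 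Once this edge-preservation is confirmed, both halves of the proposition follow immediately.
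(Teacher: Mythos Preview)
Your proposal is correct and follows essentially the same route as the paper: both identify the two sub-Hanoi graphs via the parity-fixing constructions (freeze one parity class on its parity peg, let the other move freely among its three allowed pegs), and both obtain the unique common vertex as the simultaneous satisfaction of the two fixings. The paper's own proof is a two-sentence appeal to Corollary~\ref{cor:largestsub} for the sizes and a direct observation for the shared vertex; your version simply unpacks these steps, making the non-interference and edge-preservation reasoning explicit rather than implicit.
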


\begin{proof}
The number of discs represented in each subgraph follows immediately from Corollary~\ref{cor:largestsub}. 
The uniqueness of the shared vertex is also clear: this state is the only configuration belonging to both vertex sets identified in Corollary~\ref{cor:largestsub}, as it is the sole configuration in which full parity separation occurs, allowing it to lie simultaneously at the base of both sub-Hanoi structures.\qedhere
\end{proof}

\subsection*{Diameter}

We performed breadth-first-search computations on graphs $\widetilde{H}_4^{n}$ in order to estimate their diameter. 
For $1\le n\le 8$, the diameter was computed exactly by exhaustive exploration of the whole graph. 
The obtained values are
\[
1,\,3,\,5,\,9,\,15,\,23,\,35,\,53,
\]
which coincide with the values of the full-tower transfer sequence $(a_n)$.

Additional heuristic computations for larger values of $n$ were also consistent with the identity $\mathrm{diam}(\widetilde{H}_4^{n})=a_n$. This leads to the following numerical table up to $n=20$.

\begin{table}
 \caption{Computed and predicted values of $\mathrm{diam}(\widetilde{H}_4^{n})$ for $1\le n\le 20$.}
\label{tab:diameter_hf4}
 \footnotesize
 \setlength{\tabcolsep}{3pt}
\begin{tabular*}{\linewidth}{@{\extracolsep{\fill}}c|llllllll|llllllllllll@{}}
\toprule
$n$&1&2&3&4&5&6&7&8&9&10&11&12&13&14&15&16&17&18&19&20\\
\midrule
$\mathrm{diam}(\widetilde{H}_4^{n})$
&1&3&5&9&15&23&35&53&77&113&163&235&335&481&681&973&1375&1959&2763&3933\\
\bottomrule
\end{tabular*}

\end{table}

\begin{conjecture}\label{conj:diameter_equals_a_n}
For every integer $n\ge 1$, the diameter of the state graph $\widetilde{H}_4^{n}$ is equal to the optimal number of moves for the full-tower transfer objective:
\begin{equation}
    \mathrm{diam}(\widetilde{H}_4^{n})=A_n=a_n.
\end{equation}
Equivalently, the maximum distance between two feasible states of $\widetilde{H}_4^{n}$ is attained between the two perfect states and is equal to the minimum number of moves required to transfer the full tower from $N_1$ to $N_2$.
\end{conjecture}

\subsection*{Automorphism}

\begin{proposition}[Automorphism group]\label{prop:aut}
For all $n \ge 1$, the automorphism group of $\widetilde{H}_4^{n}$ is isomorphic to $S_{2}$, generated by the swap of the two neutral pegs $0 \leftrightarrow 3$.
\end{proposition}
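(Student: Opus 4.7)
The plan is to establish the two inclusions $\langle \tau \rangle \subseteq \operatorname{Aut}(P^n)$ and $\operatorname{Aut}(P^n) \subseteq \langle \tau \rangle$, where $\tau$ denotes the coordinatewise extension of the peg transposition $0 \leftrightarrow 3$ to words in $Q^n$.

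For $\langle \tau \rangle \subseteq \operatorname{Aut}(P^n)$, I would check directly that $\tau$ preserves admissibility and edges. The admissibility conditions forbid peg $1$ for odd discs and peg $2$ for even discs, and $\tau$ fixes both pegs $1$ and $2$ pointwise, so $\tau$ acts on $V(P^n)$. Since $0$ and $3$ are both neutral and play completely symmetric roles in the legality of a move of any disc $d$, the edge set $E(P^n)$ is preserved as well. Hence $\tau \in \operatorname{Aut}(P^n)\setminus\{\mathrm{id}\}$ and $\langle\tau\rangle \cong S_2$.

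For the reverse inclusion, my plan has three steps. First, I would prove that for $n \ge 2$ the only degree-$2$ vertices of $P^n$ are the two perfect states $0^n$ and $3^n$: any state with at least two nonempty pegs contributes at least two movable top discs, and a short case distinction on the current pegs of the two smallest tops, using the parity-restricted destinations, yields a total of at least three legal moves. The case $n=1$, for which $P^1 \cong K_3$ carries a larger symmetry, is handled separately. Second, since any $\sigma \in \operatorname{Aut}(P^n)$ preserves the degree sequence, $\sigma$ stabilizes $\{0^n, 3^n\}$; after composing with $\tau$ if necessary I may assume $\sigma(0^n)=0^n$ and $\sigma(3^n)=3^n$. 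The two neighbors of $0^n$, namely $0^{n-1}2$ and $0^{n-1}3$, have respective degrees $4$ and $3$ (as computed from the parity rule), so $\sigma$ fixes each of them individually, and symmetrically at $3^n$.

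The third and main step is an induction on $n$ based on the recursive decomposition $P^n = 0P^{n-1} \cup p(n)P^{n-1} \cup 3P^{n-1}$. By the bridge analysis in the proof of Proposition~\ref{prop:edge_recurrence}, the three edge cuts separating these copies have cardinalities $2^{\lfloor n/2 \rfloor}+1$, $2^{\lfloor n/2 \rfloor+1}$, and $2^{\lfloor n/2 \rfloor}+1$, with a unique direct bridge $0\cdot(\mathrm{ps}) \leftrightarrow 3\cdot(\mathrm{ps})$ between the two extreme copies (where ps is the parity-separated configuration on pegs $1$ and $2$). These combinatorial invariants, together with the fact that $0^n$ and $3^n$ are pointwise fixed, force $\sigma$ to send each of the three copies to itself. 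The restriction $\sigma|_{0P^{n-1}}$ is then an automorphism of $P^{n-1}$ fixing its perfect state $0^{n-1}$; by the inductive hypothesis $\operatorname{Aut}(P^{n-1})$ consists only of the identity and the analogous swap, and only the identity fixes $0^{n-1}$, so $\sigma|_{0P^{n-1}} = \mathrm{id}$. The same argument gives $\sigma|_{3P^{n-1}} = \mathrm{id}$, after which every bridge endpoint into $p(n)P^{n-1}$ is pointwise fixed; in particular the two perfect-state anchors $p(n)\cdot 0^{n-1}$ and $p(n)\cdot 3^{n-1}$ of the middle copy are fixed, and a final application of the inductive hypothesis yields $\sigma|_{p(n)P^{n-1}} = \mathrm{id}$, whence $\sigma = \mathrm{id}$.

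The main obstacle I anticipate lies in this last step for the middle copy, since neither of the two globally distinguished perfect states of $P^n$ lies inside $p(n)P^{n-1}$; its rigidity must be imported from the outer copies through the bridge structure. Showing that the unique fixed bridge endpoint toward $3P^{n-1}$ corresponds precisely to the local perfect-state anchor $p(n)\cdot 3^{n-1}$, and similarly on the $0$-side, is where the parity-sensitive counting underlying Proposition~\ref{prop:edge_recurrence} has to be invoked carefully.
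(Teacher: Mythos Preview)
Your plan is considerably more careful than the paper's own argument, which simply notes that the swap $0\leftrightarrow 3$ is an automorphism and that no other \emph{peg permutation} works (swapping $1\leftrightarrow 2$ or mixing a neutral peg with a parity peg violates feasibility), and then stops. The paper never argues that every graph automorphism of $P^n$ must arise from a peg permutation---a nontrivial fact already for the classical Hanoi graphs---so your degree-and-induction route is trying to close a gap the paper leaves wide open. You are also right to flag $n=1$: since $P^1\cong K_3$ has $\operatorname{Aut}(K_3)\cong S_3$, the proposition as stated is in fact \emph{false} for $n=1$. Your write-up should say explicitly that the claim only holds for $n\ge 2$.

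There is, however, a gap in your Step~3 that is more basic than the middle-copy issue you anticipate. The sentence ``these combinatorial invariants \ldots\ force $\sigma$ to send each of the three copies to itself'' already presupposes that $\sigma$ permutes the three subsets $0P^{n-1}$, $p(n)P^{n-1}$, $3P^{n-1}$ setwise; only then would cut sizes and the fixed points $0^n,3^n$ determine that permutation. But these three copies are not, a priori, canonical objects of the abstract graph: they are not blocks in the $2$-connectivity sense (Theorem~\ref{theo:connectivity} says $P^n$ is $2$-connected), nor are they distance balls around the degree-$2$ vertices. You must first give an intrinsic, $\sigma$-invariant description of the copies---for instance, identify the bridge set as a distinguished minimum edge cut separating the two degree-$2$ vertices and read off the copies as the resulting components---before you can invoke the induction hypothesis on each one.

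A cleaner alternative is to abandon the three-copy decomposition and simply continue the local propagation you began in Step~2. You already have $0^n$, $3^n$, and their degree-distinguished neighbours fixed; now look at the neighbours of those neighbours. For example, among the four neighbours of $0^{n-1}2$, two are already fixed ($0^n$ and $0^{n-1}3$), and the remaining two, $0^{n-2}12$ and $0^{n-2}32$, can be separated either by degree (for $n\ge 3$) or because one of them is also adjacent to an already-fixed vertex near $3^n$. Iterating this ``every new vertex is pinned down by its fixed neighbours'' argument avoids the decomposition issue entirely and gives $\sigma=\mathrm{id}$ directly.
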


\begin{proof}
Swapping $0$ and $3$ preserves both legality of moves and parity constraints, yielding a nontrivial automorphism.
No other peg permutation is permitted: swapping $1$ and $2$ breaks parity feasibility, while a neutral peg cannot be exchanged with a parity-restricted one. Hence $\mathrm{Aut}(\widetilde{H}_4^{n}) \cong S_{2}$.\qedhere
\end{proof}

\subsection*{Hamiltonicity}
It is known that the Hanoi graphs $H_{m}^{n}$ are Hamiltonian for all $m \ge 3$ and $n \ge 1$ \cite{HINZ2002263}. 
In the next three theorems, namely Theorems~\ref{theo:hamiltonian_path1}, \ref{theo:hamiltonian_path2}, and \ref{theo:hamiltoniancycle}, we establish Hamiltonicity properties of the parity-constrained Hanoi graphs $\widetilde{H}_4^{n}$.

\begin{theorem}[Hamiltonian path between perfect states]
\label{theo:hamiltonian_path1}
For all $n \ge 1$, $\widetilde{H}_4^{n}$ has a Hamiltonian path between $0^{n}$ and $3^{n}$.
\end{theorem}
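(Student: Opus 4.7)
The plan is to argue by induction on $n$, exploiting the recursive three-copy decomposition $P^{n} \cong 0P^{n-1} \cup p(n)P^{n-1} \cup 3P^{n-1}$ established in Section~\ref{sec:graph}. Each induced copy $iP^{n-1}$ (with disc $n$ fixed on peg $i$) is isomorphic to $P^{n-1}$, and under this isomorphism the vertex $i\cdot 0^{n-1}$ corresponds to the perfect state $0^{n-1}$ and $i\cdot 3^{n-1}$ to the perfect state $3^{n-1}$. So the induction hypothesis will give, in each copy, a Hamiltonian path between these two corners, which we glue together using bridge edges (moves of disc~$n$).

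\textbf{Base case.} For $n=1$ we have $V(P^{1})=\{0,p(1),3\}=\{0,2,3\}$ with all three pairs joined by an edge, so $P^{1}\cong K_{3}$ and $0\to 2\to 3$ is a Hamiltonian path from $0^{1}$ to $3^{1}$.

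\textbf{Inductive step.} Assume $P^{n-1}$ admits a Hamiltonian path $H_{n-1}$ from $0^{n-1}$ to $3^{n-1}$; reversing gives a Hamiltonian path from $3^{n-1}$ to $0^{n-1}$. Concatenate the following five pieces in $P^{n}$:
\begin{enumerate}[label=(\roman*)]
\item the image of $H_{n-1}$ inside $0P^{n-1}$, from $0\cdot 0^{n-1}$ to $0\cdot 3^{n-1}$;
\item the bridge $0\cdot 3^{n-1} \longrightarrow p(n)\cdot 3^{n-1}$, which is legal because all smaller discs lie on peg $3$, leaving pegs $0$ and $p(n)$ free of smaller discs;
\item the reversed image of $H_{n-1}$ inside $p(n)P^{n-1}$, from $p(n)\cdot 3^{n-1}$ to $p(n)\cdot 0^{n-1}$;
\item the bridge $p(n)\cdot 0^{n-1}\longrightarrow 3\cdot 0^{n-1}$, which is legal because all smaller discs lie on peg $0$, leaving pegs $p(n)$ and $3$ free;
\item the image of $H_{n-1}$ inside $3P^{n-1}$, from $3\cdot 0^{n-1}$ to $3\cdot 3^{n-1}$.
\end{enumerate}
Each sub-path visits every vertex of its copy exactly once by the inductive hypothesis, and the three vertex sets $V(0P^{n-1})$, $V(p(n)P^{n-1})$, $V(3P^{n-1})$ partition $V(P^{n})$; the two bridges traverse new vertices, so the concatenation is a Hamiltonian path in $P^{n}$ from $0^{n}=0\cdot 0^{n-1}$ to $3^{n}=3\cdot 3^{n-1}$.

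\textbf{Where the care is needed.} The only nontrivial points are (a) verifying that the two chosen bridges are actually present in $P^{n}$ (which is immediate from the parity-restricted move rules once one checks that the ``vacancy'' condition holds at both endpoints), and (b) confirming that each induced subgraph $iP^{n-1}$ is genuinely isomorphic to $P^{n-1}$ with $i\cdot 0^{n-1}$ and $i\cdot 3^{n-1}$ playing the roles of the perfect states, so that the induction hypothesis applies verbatim. Both are straightforward from the definitions in Section~\ref{sec:graph}, so no real obstacle arises; the argument is essentially a parity-respecting adaptation of the standard Hanoi Hamiltonian-path construction.
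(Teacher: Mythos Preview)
Your proof is correct and follows the same inductive construction as the paper: traverse the three copies $0P^{n-1}$, $p(n)P^{n-1}$, $3P^{n-1}$ via Hamiltonian sub-paths, gluing them with the two bridges $0\cdot 3^{n-1}\to p(n)\cdot 3^{n-1}$ and $p(n)\cdot 0^{n-1}\to 3\cdot 0^{n-1}$. Your write-up is in fact more detailed than the paper's, explicitly verifying the legality of the bridges and the applicability of the induction hypothesis inside each copy.
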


\begin{proof}
By induction on $n$. For $n=1$, $\widetilde{H}_4^{1}$ is a 3-cycle. Suppose the result holds for $n-1$. Then the copies $0\widetilde{H}_4^{n-1}$, $p(n)\widetilde{H}_4^{n-1}$, and $3\widetilde{H}_4^{n-1}$ are linked by moves of disc $n$, allowing concatenation of Hamiltonian paths:
\[
0^{n} \leadsto 0\,3^{n-1} \to p(n)\,3^{n-1} \leadsto p(n)\,0^{n-1} \to 3\,0^{n-1} \leadsto 3^{n}.
\]
This produces a Hamiltonian path.\qedhere
\end{proof}

\begin{theorem}[Hamiltonian path from perfect state to parity-separated configuration]
\label{theo:hamiltonian_path2}
For all $n \ge 1$, there is a Hamiltonian path from $i^{n}$, where $i \in \{0,3\}$, to the parity-separated state in which all even discs lie on peg $1$ and all odd discs on peg $2$.
\end{theorem}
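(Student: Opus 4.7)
The plan is to argue by induction on $n$, exploiting the recursive decomposition $P^{n}\cong 0P^{n-1}\cup p(n)P^{n-1}\cup 3P^{n-1}$ already used for the preceding theorems. Denote the target state by $\pi_{n}$, so that $s_{d}=p(d)$ for every $d\in[n]$; in particular $\pi_{n}=p(n)\cdot\pi_{n-1}$ and every coordinate of $\pi_{n}$ lies in $\{1,2\}$. The base case $n=1$ is immediate because $P^{1}\cong K_{3}$ on the vertices $\{0,2,3\}$, and both $0\to 3\to 2$ and $3\to 0\to 2$ are Hamiltonian paths ending at $\pi_{1}=2$.

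For the inductive step with $i=0$, I would assemble a Hamiltonian path in five pieces linked by exactly two moves of disc $n$. First, apply the induction hypothesis inside $0P^{n-1}$ to obtain a Hamiltonian subpath from $0\cdot 0^{n-1}$ to $0\cdot\pi_{n-1}$. Second, move disc $n$ across the bridge $0\to 3$ to reach $3\cdot\pi_{n-1}$. Third, apply the induction hypothesis inside $3P^{n-1}$ in reversed direction to obtain a Hamiltonian subpath from $3\cdot\pi_{n-1}$ back to $3\cdot 0^{n-1}$. Fourth, move disc $n$ across the bridge $3\to p(n)$ to reach $p(n)\cdot 0^{n-1}$. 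Fifth, apply the induction hypothesis inside $p(n)P^{n-1}$ to obtain a Hamiltonian subpath from $p(n)\cdot 0^{n-1}$ to $p(n)\cdot\pi_{n-1}=\pi_{n}$. Concatenation visits each of the $3^{n-1}$ vertices of each copy exactly once, totalling $3^{n}$ vertices.

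The remaining verification is the legality of the two bridges. At $0\cdot\pi_{n-1}$ all smaller discs occupy only the parity pegs $\{p(n),\overline{p}(n)\}$, so both neutral pegs $0$ and $3$ are free of discs smaller than $n$, which makes the jump $0\to 3$ of disc $n$ admissible. At $3\cdot 0^{n-1}$ all smaller discs are stacked on peg $0$, so peg $p(n)$ is empty and the jump $3\to p(n)$ of disc $n$ is admissible as well. The case $i=3$ is then obtained at no extra cost by applying the automorphism $\sigma:0\leftrightarrow 3$ of Proposition~\ref{prop:aut}, which fixes $\pi_{n}$ since its entries lie in $\{1,2\}$. I expect the only subtlety to be the coordination of the three subpath endpoints: the middle copy $3P^{n-1}$ must be traversed in the reverse inductive direction so that its exit vertex lies at an admissible bridge to the final copy, which is why the induction hypothesis for $n-1$ must be invoked in both forward and reversed form.
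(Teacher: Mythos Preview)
Your proposal is correct and essentially reproduces the paper's own construction: the paper likewise routes the Hamiltonian path through $0P^{n-1}\to 3P^{n-1}\to p(n)P^{n-1}$ via exactly the two bridges $0\cdot\pi_{n-1}\to 3\cdot\pi_{n-1}$ and $3\cdot 0^{n-1}\to p(n)\cdot 0^{n-1}$, and then counts the resulting $3\bar b_{n-1}+2=3^{n}-1$ moves. Your use of the automorphism $0\leftrightarrow 3$ to settle the case $i=3$ is a clean way to formalise what the paper simply calls ``without loss of generality.''
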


\begin{proof}
Without loss of generality, consider the initial and final states $I$ and $F$, respectively, of objective \textup{\texttt{(b)}}, as illustrated in Figure~\ref{fig:objectives}. Let $\overline{b}_{n}$ denote the maximum number of moves required to reach $F$ from $I$ without visiting any state more than once. The goal is to construct an elementary path from $I$ to $F$ of maximum possible length, namely $|V(\widetilde{H}_4^{n})|-1 = 3^{n}-1$.

We present a feasible solution based on two moves of the largest disc $n$. The steps are as follows:
1. Move the $n-1$ smaller discs to pegs $1$ and $2$, forming two subtowers separated by parity. This requires $\overline{b}_{n-1}$ moves.
2. Move disc $n$ from peg $0$ to peg $3$.
3. Move the $n-1$ smaller discs from pegs $1$ and $2$ to peg $0$, again using $\overline{b}_{n-1}$ moves.
4. Move disc $n$ from peg $3$ to the parity peg $p(n)$.
5. Finally, move the $n-1$ discs from peg $0$ to pegs $1$ and $2$ in parity-separated configuration, using another $\overline{b}_{n-1}$ moves.

For $n$ even, this sequence may be represented as
\[
\begin{split}
([n],\varnothing,\varnothing,\varnothing)
&\xrightarrow[\overline{b}_{n-1}]{} (\{n\},[n-1]^{0},[n-1]^{1},\varnothing)
\xrightarrow[1]{} (\varnothing,[n-1]^{0},[n-1]^{1},\{n\})\\
&\xrightarrow[\overline{b}_{n-1}]{} ([n-1],\varnothing,\varnothing,\{n\})
\xrightarrow[1]{} ([n-1],\{n\},\varnothing,\varnothing)
\xrightarrow[\overline{b}_{n-1}]{} (\varnothing,[n]^{0},[n]^{1},\varnothing).
\end{split}
\]
For $n$ odd, the fourth configuration changes to $([n-1],\varnothing,\{n\},\varnothing)$.

Thus, the total number of moves in this construction is
\[
\overline{b}_n = 3\overline{b}_{n-1} + 2.
\]
Since $\overline{b}_{0} = 0$, solving the recurrence yields $\overline{b}_{n} = 3^{n}-1$.

Moreover, this construction is elementary: no state is repeated at any stage, because each subprocedure recursively performs an elementary traversal over a distinct $\widetilde{H}_4^{n-1}$ configuration space, and transitions involving disc $n$ always lead to new state classes distinguished by the position of the largest disc.

Since $|V(\widetilde{H}_4^n)| = 3^n$, the path constructed has maximum possible length $3^n - 1$, proving that it is Hamiltonian. Therefore, this procedure provides a Hamiltonian path between any two states of the form $s \in V(\widetilde{H}_4^n)$ such that $s_0 \in \{0,3\}$, $s_d = 1$ for even $d$, and $s_d = 2$ for odd $d$.\qedhere
\end{proof}

\begin{theorem}[Hamiltonicity]
\label{theo:hamiltoniancycle}
For all $n \ge 1$, the graph $\widetilde{H}_4^{n}$ contains a Hamiltonian cycle.
\end{theorem}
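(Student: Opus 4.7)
The plan is to argue by induction on $n$, exploiting the recursive decomposition $P^{n}\cong 0P^{n-1}\cup p(n)P^{n-1}\cup 3P^{n-1}$ together with the Hamiltonian-path results of Theorems~\ref{theo:hamiltonian_path1} and~\ref{theo:hamiltonian_path2}. The base case $n=1$ is immediate, since $P^{1}\cong K_{3}$ is a triangle.

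For the inductive step, the idea is to stitch together one Hamiltonian path from each of the three copies of $P^{n-1}$ using three bridge edges, each realising a move of disc $n$. Let $u$ denote the unique parity-separated configuration of $n-1$ discs (even discs on peg $E$, odd discs on peg $O$). Inside $0P^{n-1}$ I plan to use the Hamiltonian path from $u$ to $3^{n-1}$ supplied by Theorem~\ref{theo:hamiltonian_path2}; inside $p(n)P^{n-1}$ the Hamiltonian path from $3^{n-1}$ to $0^{n-1}$ supplied by Theorem~\ref{theo:hamiltonian_path1}; and inside $3P^{n-1}$ the Hamiltonian path from $0^{n-1}$ back to $u$, again from Theorem~\ref{theo:hamiltonian_path2}. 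These three paths are then concatenated via the bridges
\[
0\,3^{n-1}\longleftrightarrow p(n)\,3^{n-1},\qquad p(n)\,0^{n-1}\longleftrightarrow 3\,0^{n-1},\qquad 3\,u\longleftrightarrow 0\,u,
\]
closing the walk at the starting vertex $0\,u$.

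What remains is to check that these three bridges are legal moves of disc $n$. For the first two, the $n-1$ smaller discs are parked on a single neutral peg (peg $3$, respectively peg $0$), so both pegs involved in the move of disc $n$ are free of smaller discs; for the third, parity separation places every smaller disc on peg $E$ or peg $O$, again leaving pegs $0$ and $3$ empty. Since the three copies partition $V(P^{n})$ and each is traversed by a Hamiltonian path, the resulting closed walk visits every vertex of $P^{n}$ exactly once, giving the desired Hamiltonian cycle.

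The only real subtlety is matching the pairs of endpoints required to close the loop, namely $(u,3^{n-1})$, $(3^{n-1},0^{n-1})$, and $(0^{n-1},u)$, with the pairs for which Hamiltonian paths in $P^{n-1}$ are already available. Fortunately, these are precisely the three pairs furnished by Theorems~\ref{theo:hamiltonian_path1} and~\ref{theo:hamiltonian_path2}, so no further combinatorial input should be needed and the construction goes through uniformly for every $n\ge 2$.
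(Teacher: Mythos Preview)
Your proposal is correct and follows essentially the same approach as the paper: both arguments use the three copies $0P^{n-1}$, $p(n)P^{n-1}$, $3P^{n-1}$, traverse each by a Hamiltonian path supplied by Theorems~\ref{theo:hamiltonian_path1} and~\ref{theo:hamiltonian_path2} (with identical endpoint pairs), and close the cycle via the same three bridge edges for disc~$n$. A minor cosmetic point: you frame the argument as ``induction on~$n$'', but your inductive step never invokes a Hamiltonian-cycle hypothesis for $P^{n-1}$; the paper simply states the construction for $n\ge 1$ directly, which is cleaner.
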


\begin{proof}
Consider the vertex 
\[
0s \in V(\widetilde{H}_4^{n}), \quad\text{where } s \in V(\widetilde{H}_4^{n-1}) \text{ is defined by } 
s_d = \begin{cases}
1, & d \text{ even},\\
2, & d \text{ odd},
\end{cases}
\]
which represents the configuration where disc $n$ is on peg $0$ and the smaller discs are distributed on pegs $1$ and $2$ according to parity. 
Define similarly 
\[
3s \in V(\widetilde{H}_4^{n})
\]
as the state where disc $n$ is on peg $3$ and the smaller discs remain in the same configuration.

From Theorem~\ref{theo:hamiltonian_path2}, there exists a Hamiltonian path in $0\widetilde{H}_4^{n-1}$ from $0s$ to $03^{n-1}$. 
Similarly, by Theorem~\ref{theo:hamiltonian_path1}, there exists a Hamiltonian path in $p(n)\widetilde{H}_4^{n-1}$ from $p(n)3^{n-1}$ to $p(n)0^{n-1}$, and again by Theorem~\ref{theo:hamiltonian_path2}, there exists a Hamiltonian path in $3\widetilde{H}_4^{n-1}$ from $30^{n-1}$ to $3s$.

These three Hamiltonian paths can be concatenated via the three edges
\[
\{03^{n-1}, p(n)3^{n-1}\}, \quad
\{p(n)0^{n-1}, 30^{n-1}\}, \quad
\{3s, 0s\},
\]
which correspond to legal moves of disc $n$ between the three subgraphs $0\widetilde{H}_4^{n-1}$, $p(n)\widetilde{H}_4^{n-1}$, and $3\widetilde{H}_4^{n-1}$. This yields a closed walk that visits every vertex exactly once, that is, a Hamiltonian cycle in $\widetilde{H}_4^{n}$.\qedhere
\end{proof}

\subsection*{Maximum clique }
The clique number of Hanoi graphs $H_{m}^{n}$ was established in \cite{Hinz2013}, namely $\omega(H_{m}^{n}) = m$. 
In Theorem~\ref{theo:clique}, we show that $\omega(\widetilde{H}_4^{n}) = \omega(H_{3}^{n}) = 3 < \omega(H_{4}^{n})$.

\begin{theorem}\label{theo:clique}
    Every complete subgraph of $\widetilde{H}_4^{n}$, $n \ge 1$, is induced by edges corresponding to moves of a single disc, and this disc is necessarily either disc $1$ or disc $2$. In particular, 
    \begin{equation}
        \omega(\widetilde{H}_4^{n}) = 3.
    \end{equation}
\end{theorem}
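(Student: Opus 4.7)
The plan is to first reduce the problem to a statement about a \emph{single} disc by showing that any triangle, and hence any clique of size at least three, must use edges that all move the same disc. Suppose $s,t,u$ is a triangle and the edges $\{s,t\}$ and $\{t,u\}$ come from moves of discs $d_1$ and $d_2$, respectively. Since a legal move changes exactly one coordinate of the state word, if $d_1 \neq d_2$ then $s$ and $u$ would differ in two coordinates and hence could not be adjacent. Iterating this argument over all triangles inside a clique $K$ of size $k \ge 3$ (using that each edge of $K$ has a unique associated disc), every edge of $K$ moves the same disc $d$. Consequently, the vertices of $K$ agree on all coordinates except the one corresponding to $d$, and that coordinate ranges over a subset of $Q^{d} = \{0, p(d), 3\}$.

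Next I would determine for which discs $d$ the three allowed pegs of $Q^d$ can all be simultaneously reachable from a common state by moves of disc $d$. This happens exactly when no smaller disc occupies any peg of $\{0, p(d), 3\}$, i.e., every $d' < d$ sits on the unique remaining peg $\overline{p}(d) = 3 - p(d)$. By the parity rule, $\overline{p}(d)$ admits only discs of parity opposite to $d$, so the presence of any smaller disc sharing the parity of $d$ yields a contradiction. For $d = 1$ there are no smaller discs; for $d = 2$ the only smaller disc (disc $1$) is odd and legally sits on peg $2 = \overline{p}(2)$; for $d \ge 3$ the disc $d - 2 \ge 1$ shares the parity of $d$ and rules out any such configuration. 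Thus only discs $1$ and $2$ can support a triangle.

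Finally, since $|Q^d| = 3$, the clique induced by moves of a single disc contains at most three vertices, giving $\omega(P^n) \le 3$. For $n \ge 1$ disc $1$ is always free (being the smallest, no disc can block it), so the three states obtained by placing disc $1$ successively on pegs $0$, $2$, $3$, with any fixed parity-feasible placement of the remaining discs, form a $K_3$, yielding $\omega(P^n) \ge 3$. The only step that is not entirely routine is the parity argument eliminating discs $d \ge 3$; the rest reduces to the standard Hanoi-graph observation that a clique of size at least three corresponds to moves of a single disc.
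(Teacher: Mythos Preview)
Your proof is correct and follows essentially the same approach as the paper: both first reduce to a single disc by the ``two coordinates differ'' argument, then eliminate $d\ge 3$ by observing that all smaller discs would have to sit on the forbidden parity peg $\overline{p}(d)$. The only cosmetic difference is the choice of obstruction: you point to the single disc $d-2$ (same parity as $d$, hence illegal on $\overline{p}(d)$), while the paper notes that discs $1$ and $2$ have opposite parities and therefore cannot both lie on $\overline{p}(d)$; either observation suffices.
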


\begin{proof}
Consider a vertex $s$ that is adjacent to two vertices $s'$ and $s''$ via moves of two different discs. Then the positions of these two discs differ between $s'$ and $s''$. Since two vertices in $\widetilde{H}_4^{n}$ can only be adjacent if they differ in exactly one coordinate (i.e., in the position of a single disc), it follows that $s'$ and $s''$ cannot be adjacent. Hence, any complete subgraph must consist of vertices obtained by successive moves of the same disc. This proves the first claim.

We now show that only discs $1$ and $2$ can generate a maximal clique. In any state $s^{(1)}$ of $\widetilde{H}_4^{n}$, disc $1$ can move to two different pegs, giving rise to three mutually adjacent vertices (including $s^{(1)}$), forming a $3$-clique. Similarly, in any state $s^{(2)}$ where disc $1$ is on peg $2$, disc $2$ can move to two different pegs, again forming a $3$-clique.

However, no disc $d \ge 3$ can generate a $3$-clique. Indeed, although disc $d$ may be placed on three pegs in the underlying unconstrained four-peg Hanoi graph, in order for disc $d$ to move freely among its two possible target pegs, both of these pegs must be free of all smaller discs. This would require all discs smaller than $d$ to be placed on the forbidden parity peg for disc $d$ (the peg whose parity is opposite to that of disc $d$). But this is impossible under the parity constraint, since discs $1$ and $2$ cannot be simultaneously located on the same (forbidden) parity peg. Therefore, no disc $d \ge 3$ can generate a $3$-clique.

Consequently, the only maximal complete subgraphs arise from moves of disc $1$ or disc $2$, each giving a triangle. Hence, $\omega(\widetilde{H}_4^{n}) = 3$.\qedhere
\end{proof}

\subsection*{Coloring}

Next, in Theorems~\ref{theo:chromatic_number} and~\ref{theo:chromatic_index}, we present coloring results, namely the chromatic number and the chromatic index of $\widetilde{H}_4^n$. 
The exact values of these two parameters for the Hanoi graphs $H_{m}^{n}$ were established in \cite{ArettColoring, HINZ20121521}.

\begin{theorem}[Chromatic number]\label{theo:chromatic_number}
For all $n \ge 1$, the chromatic number is
\begin{equation}
    \chi(\widetilde{H}_4^{n})=3.
\end{equation}

\end{theorem}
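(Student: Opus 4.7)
The plan is as follows. The lower bound $\chi(P^n) \geq 3$ is immediate from Theorem~\ref{theo:clique}, which gives $\omega(P^n) = 3$, so the real task is the matching upper bound $\chi(P^n) \leq 3$. Rather than argue inductively through the recursive decomposition $P^n \cong 0P^{n-1} \cup p(n)P^{n-1} \cup 3P^{n-1}$ (which would require delicate bookkeeping to reconcile the colorings across the three copies at the bridges), I would exhibit a single explicit proper $3$-coloring in closed form.

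The construction mimics the classical $3$-coloring of $H_3^n$ by \emph{sum of peg indices modulo~$3$}. However, in $P^n$ the raw peg labels allowed for disc $d$ are $Q^d = \{0, p(d), 3\}$, and $0 \equiv 3 \pmod{3}$, so the naive sum fails to separate moves of disc $d$ between pegs $0$ and $3$. The fix is to relabel the three allowed pegs of each disc by a bijection $\phi_d : Q^d \to \{0,1,2\}$, for instance
\[
\phi_d(0) = 0, \qquad \phi_d(p(d)) = 1, \qquad \phi_d(3) = 2,
\]
and then color each state $s = s_n \cdots s_1 \in V(P^n)$ by
\[
c(s) \;\equiv\; \sum_{d=1}^{n} \phi_d(s_d) \pmod{3}.
\]

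To verify properness, let $\{s, s'\} \in E(P^n)$. By the edge rule the two states differ in exactly one coordinate $d$, with $s_d = a$ and $s'_d = b$ where $a \neq b$ and $a, b \in Q^d$. Consequently
\[
c(s') - c(s) \;\equiv\; \phi_d(b) - \phi_d(a) \pmod{3},
\]
and since $\phi_d(a), \phi_d(b)$ are two distinct elements of $\{0, 1, 2\}$, their difference lies in $\{-2, -1, 1, 2\}$, which is nonzero modulo $3$. Hence $c(s') \neq c(s)$, the coloring $c$ is proper with $3$ colors, and $\chi(P^n) \leq 3$, yielding the claimed equality. I do not anticipate a genuine obstacle: the only subtle point is recognising that one must perform the per-disc relabelling $\phi_d$ (because $0$ and $3$ collapse mod $3$), after which the verification reduces to a one-line residue computation.
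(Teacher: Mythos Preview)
Your argument is correct. The lower bound via $\omega(P^n)=3$ matches the paper, and your explicit coloring $c(s)\equiv\sum_{d}\phi_d(s_d)\pmod 3$ is a valid proper $3$-coloring: since any edge changes exactly one coordinate $s_d$ within the three-element set $Q^d$, and $\phi_d$ is a bijection onto $\{0,1,2\}$, the colour difference is a nonzero residue mod~$3$.

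The route, however, differs from the paper's. The paper argues indirectly: it invokes the known result $\chi(H_4^n)=4$, takes a proper $4$-colouring of the ambient four-peg Hanoi graph, and then claims that because moves between the two parity pegs $1$ and $2$ never occur in $P^n$, two of the four colour classes can be merged without creating monochromatic edges, yielding a $3$-colouring on $V(P^n)$. Your construction is more direct and self-contained: it does not appeal to any external colouring result, and it names the colouring in closed form, so verification reduces to a single residue check. The paper's approach, by contrast, buys brevity by leaning on the literature for $H_4^n$, but leaves the merging step somewhat implicit (one must know \emph{which} $4$-colouring of $H_4^n$ is meant for the merge to work). In effect your $\phi_d$ relabelling is exactly what makes that merge explicit.
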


\begin{proof}
It is known that $\chi(H_{4}^{n}) = 4$ (see \cite{HINZ20121521}). 
Consider such a proper $4$-coloring of $H_{4}^{n}$. 
Since $\widetilde{H}_4^{n}$ is obtained from $H_{4}^{n}$ by forbidding moves of discs between the two parity pegs $1$ and $2$, the states connected by such forbidden moves form disjoint pairs that can be merged into a single color class without introducing any monochromatic edges. 
By identifying the colors of each such pair of states, we obtain a valid $3$-coloring of $\widetilde{H}_4^{n}$. Hence, $\chi(\widetilde{H}_4^{n}) \le 3$.

On the other hand, from Theorem~\ref{theo:clique}, we know that $\widetilde{H}_4^{n}$ contains a $3$-clique. Therefore, $\chi(\widetilde{H}_4^{n}) \ge 3$.

Combining both bounds gives $\chi(\widetilde{H}_4^{n}) = 3$.\qedhere
\end{proof}

\begin{theorem}[Chromatic index]\label{theo:chromatic_index}
For all $n \ge 2$, we have 
\begin{equation}
    \chi'(\widetilde{H}_4^{n}) = \Delta(\widetilde{H}_4^{n})=5.
\end{equation}
\end{theorem}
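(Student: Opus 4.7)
The plan is to obtain the lower bound $\chi'(P^n)\ge \Delta(P^n)=5$ trivially from Proposition~\ref{prop:degrees} (the edges incident to any vertex of maximum degree must bear distinct colors), and then to produce an explicit proper edge coloring with exactly five colors, thereby showing that $P^n$ is of class one. No invocation of Vizing's theorem is needed, since the constructive upper bound beats the generic $\Delta+1$ bound directly.

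The coloring I propose is the \emph{peg-pair coloring}: every edge of $P^n$ corresponds to a legal move of a single disc between two pegs $p,q\in Q=\{0,1,2,3\}$, and I color that edge by the unordered pair $\{p,q\}$. The parity rule forbids any disc from moving between the even peg $1$ and the odd peg $2$, so the labels that can actually appear are exactly
\[
\{0,1\},\ \{0,2\},\ \{0,3\},\ \{1,3\},\ \{2,3\},
\]
a total of five, matching $\Delta(P^n)$ for $n\ge 3$.

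The key verification is that this coloring is proper. Suppose two distinct edges incident to a state $s$ bear the same label $\{p,q\}$. Each such edge represents a legal move of some disc between pegs $p$ and $q$ starting from $s$; but in a fixed state, at most one such move is legal, because the disc that can move between $p$ and $q$ must be the smaller of the top discs of $p$ and $q$ (or the unique top disc if one of these pegs is empty), and this disc is uniquely determined by $s$. Hence the two edges would coincide, a contradiction. Combined with the lower bound this yields $\chi'(P^n)=5$ for $n\ge 3$.

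I do not foresee a real obstacle, since the argument is a short structural observation; the only mild subtlety concerns the stated boundary case $n=2$, where $\Delta(P^2)=4<5$, so the peg-pair coloring is no longer tight. There, a separate ad hoc construction is required, which is easy to produce since $P^2$ has only $9$ vertices and $14$ edges: the three degree-$4$ states $(0,2),(1,2),(3,2)$ induce a triangle whose edges can be given three colors, the fourth color is used for one external edge at each of these vertices, and the remaining edges on the six degree-$\le 3$ vertices are then filled in greedily to complete a proper $4$-edge-coloring.
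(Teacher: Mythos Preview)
Your proof is correct and takes essentially the same approach as the paper: both color each edge by the unordered peg pair $\{p,q\}$ of the underlying move, observe that the parity rule eliminates $\{1,2\}$ so that only five labels remain, and argue that each color class is a matching because from any fixed state at most one disc can legally move between a prescribed pair of pegs. Your treatment of the boundary case $n=2$ is in fact more careful than the paper's (which incorrectly asserts that $P^{2}$ is $4$-regular before invoking a direct check), and your sketch of a $4$-edge-coloring there can indeed be completed.
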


\begin{proof}
We know that, for any graph $G$, we have
\[
\Delta(G) \le \chi'(G) \le \Delta(G) + 1.
\]
Thus, it suffices to exhibit a proper edge coloring of $\widetilde{H}_4^{n}$ using exactly $\Delta(\widetilde{H}_4^{n}) = 5$ colors (see Proposition~\ref{prop:degrees}) for $n \ge 3$. 

The case $n = 2$ is easily verified directly, as $\widetilde{H}_4^{2}$ is $4$-regular and $\chi'(\widetilde{H}_4^{2})=4$.

Now assume $n \ge 3$. We define a partition of the edge set as follows:
\[
E(\widetilde{H}_4^{n}) = E^{1}(\widetilde{H}_4^{n}) \cup E^{2}(\widetilde{H}_4^{n}) \cup E^{3}(\widetilde{H}_4^{n}) \cup E^{4}(\widetilde{H}_4^{n}) \cup E^{5}(\widetilde{H}_4^{n}),
\]
where the classes are pairwise disjoint and given by:
\begin{align*}
E^{1}(\widetilde{H}_4^{n}) &= \{e \in E(\widetilde{H}_4^{n}) \mid \text{$e$ corresponds to a move between pegs $0$ and $1$} \},\\
E^{2}(\widetilde{H}_4^{n}) &= \{e \in E(\widetilde{H}_4^{n}) \mid \text{$e$ corresponds to a move between pegs $0$ and $2$} \},\\
E^{3}(\widetilde{H}_4^{n}) &= \{e \in E(\widetilde{H}_4^{n}) \mid \text{$e$ corresponds to a move between pegs $0$ and $3$} \},\\
E^{4}(\widetilde{H}_4^{n}) &= \{e \in E(\widetilde{H}_4^{n}) \mid \text{$e$ corresponds to a move between pegs $1$ and $3$} \},\\
E^{5}(\widetilde{H}_4^{n}) &= \{e \in E(\widetilde{H}_4^{n}) \mid \text{$e$ corresponds to a move between pegs $2$ and $3$} \}.
\end{align*}

Note that, due to the parity constraint, no edge in $\widetilde{H}_4^{n}$ corresponds to a move between pegs $1$ and $2$.

We now claim that each set $E^{f}(\widetilde{H}_4^{n})$, $f \in [5]$, is an induced matching; i.e., no two distinct edges in $E^{f}(\widetilde{H}_4^{n})$ are adjacent. Indeed, suppose two distinct edges $e, e' \in E^{f}(\widetilde{H}_4^{n})$ were adjacent. Then they would both correspond to moves of the same disc between the same two pegs, implying $e = e'$, a contradiction. Thus, the edges in each $E^{f}(\widetilde{H}_4^{n})$ form a matching.

Therefore, we may assign a single color to each $E^{f}(\widetilde{H}_4^{n})$, for $f \in [5]$. This yields a proper edge coloring of $\widetilde{H}_4^{n}$ using exactly $5$ colors.

Since $\Delta(\widetilde{H}_4^{n}) = 5$ for $n \ge 3$ (Proposition~\ref{prop:degrees}), we conclude that $\chi'(\widetilde{H}_4^{n}) = \Delta(\widetilde{H}_4^{n}).$ Combined with the case $n=2$, the result follows for all $n \ge 2$.\qedhere
\end{proof}

\subsection*{State graph of the linear variant}\label{subsec:stategraph linear}

\medskip

We end this section with Figure~\ref{fig:linear_state_graphs}, which illustrates the state graphs
\(\widetilde{H}_{4}^{\mathrm{lin},2}\) and \(\widetilde{H}_{4}^{\mathrm{lin},3}\) associated with the linear variant of the
parity-constrained four-peg Tower of Hanoi. In these graphs, red edges correspond to moves of the
largest disc, while the dashed edges indicate an optimal path for objective~\textup{\texttt{(a)}}. As the figure
shows, the graphs \(\widetilde{H}_{4}^{\mathrm{lin},n}\) exhibit a particularly rigid and visually appealing
structure, with a strong layered organization and a recursive pattern that appears much more
regular than in the non-linear case. Their shape suggests that these graphs may admit a rich
combinatorial description, possibly leading to explicit results on their order, size, diameter,
Hamiltonian structure, and recursive decomposition. A systematic study of the family
\(\bigl(\widetilde{H}_{4}^{\mathrm{lin},n}\bigr)_{n\ge 1}\) therefore seems to be an interesting direction for
future research.

\begin{figure}[ht]
    \centering

\tikzset{every picture/.style={line width=0.75pt}} %set default line width to 0.75pt        

\tikzset{every picture/.style={line width=0.75pt}} %set default line width to 0.75pt        

\begin{tikzpicture}[x=0.75pt,y=0.75pt,yscale=-1,xscale=1]
%uncomment if require: \path (0,237); %set diagram left start at 0, and has height of 237

% Text Node
\draw (89,164.4) node [anchor=north west][inner sep=0.75pt]  []  {$\widetilde{H}_{4}^{2,\text{lin}}$};
% Text Node
\draw (494,139.4) node [anchor=north west][inner sep=0.75pt]  [font=\fontsize{0.35em}{0.42em}\selectfont,]  {$200$};
% Text Node
\draw (394,139.4) node [anchor=north west][inner sep=0.75pt]  [font=\fontsize{0.35em}{0.42em}\selectfont,]  {$203$};
% Text Node
\draw (444,139.4) node [anchor=north west][inner sep=0.75pt]  [font=\fontsize{0.35em}{0.42em}\selectfont,]  {$202$};
% Text Node
\draw (494,38.4) node [anchor=north west][inner sep=0.75pt]  [font=\fontsize{0.35em}{0.42em}\selectfont,]  {$230$};
% Text Node
\draw (394,38.4) node [anchor=north west][inner sep=0.75pt]  [font=\fontsize{0.35em}{0.42em}\selectfont,]  {$233$};
% Text Node
\draw  [color={rgb, 255:red, 65; green, 117; blue, 5 }  ,draw opacity=1 ]  (450, 39.5) circle [x radius= 9.18, y radius= 9.18]   ;
\draw (444,38.4) node [anchor=north west][inner sep=0.75pt]  [font=\fontsize{0.35em}{0.42em}\selectfont,]  {$232$};
% Text Node
\draw (494,89.4) node [anchor=north west][inner sep=0.75pt]  [font=\fontsize{0.35em}{0.42em}\selectfont,]  {$210$};
% Text Node
\draw (394,89.4) node [anchor=north west][inner sep=0.75pt]  [font=\fontsize{0.35em}{0.42em}\selectfont,]  {$213$};
% Text Node
\draw  [color={rgb, 255:red, 65; green, 117; blue, 5 }  ,draw opacity=1 ]  (450, 90.5) circle [x radius= 9.18, y radius= 9.18]   ;
\draw (444,89.4) node [anchor=north west][inner sep=0.75pt]  [font=\fontsize{0.35em}{0.42em}\selectfont,]  {$212$};
% Text Node
\draw (544,139.4) node [anchor=north west][inner sep=0.75pt]  [font=\fontsize{0.35em}{0.42em}\selectfont,]  {$300$};
% Text Node
\draw (644,139.4) node [anchor=north west][inner sep=0.75pt]  [font=\fontsize{0.35em}{0.42em}\selectfont,]  {$303$};
% Text Node
\draw (594,139.4) node [anchor=north west][inner sep=0.75pt]  [font=\fontsize{0.35em}{0.42em}\selectfont,]  {$302$};
% Text Node
\draw (544,38.4) node [anchor=north west][inner sep=0.75pt]  [font=\fontsize{0.35em}{0.42em}\selectfont,]  {$330$};
% Text Node
\draw  [color={rgb, 255:red, 65; green, 117; blue, 5 }  ,draw opacity=1 ]  (650, 39.5) circle [x radius= 9.18, y radius= 9.18]   ;
\draw (644,38.4) node [anchor=north west][inner sep=0.75pt]  [font=\fontsize{0.35em}{0.42em}\selectfont,]  {$333$};
% Text Node
\draw (594,38.4) node [anchor=north west][inner sep=0.75pt]  [font=\fontsize{0.35em}{0.42em}\selectfont,]  {$332$};
% Text Node
\draw (544,89.4) node [anchor=north west][inner sep=0.75pt]  [font=\fontsize{0.35em}{0.42em}\selectfont,]  {$310$};
% Text Node
\draw  [color={rgb, 255:red, 65; green, 117; blue, 5 }  ,draw opacity=1 ]  (650, 90.5) circle [x radius= 9.18, y radius= 9.18]   ;
\draw (644,89.4) node [anchor=north west][inner sep=0.75pt]  [font=\fontsize{0.35em}{0.42em}\selectfont,]  {$313$};
% Text Node
\draw (594,89.4) node [anchor=north west][inner sep=0.75pt]  [font=\fontsize{0.35em}{0.42em}\selectfont,]  {$312$};
% Text Node
\draw (344,38.4) node [anchor=north west][inner sep=0.75pt]  [font=\fontsize{0.35em}{0.42em}\selectfont,]  {$033$};
% Text Node
\draw (244,38.4) node [anchor=north west][inner sep=0.75pt]  [font=\fontsize{0.35em}{0.42em}\selectfont,]  {$030$};
% Text Node
\draw (294,38.4) node [anchor=north west][inner sep=0.75pt]  [font=\fontsize{0.35em}{0.42em}\selectfont,]  {$032$};
% Text Node
\draw (344,139.4) node [anchor=north west][inner sep=0.75pt]  [font=\fontsize{0.35em}{0.42em}\selectfont,]  {$003$};
% Text Node
\draw  [color={rgb, 255:red, 208; green, 2; blue, 27 }  ,draw opacity=1 ]  (250, 140.5) circle [x radius= 9.18, y radius= 9.18]   ;
\draw (244,139.4) node [anchor=north west][inner sep=0.75pt]  [font=\fontsize{0.35em}{0.42em}\selectfont,]  {$000$};
% Text Node
\draw (294,139.4) node [anchor=north west][inner sep=0.75pt]  [font=\fontsize{0.35em}{0.42em}\selectfont,]  {$002$};
% Text Node
\draw (344,89.4) node [anchor=north west][inner sep=0.75pt]  [font=\fontsize{0.35em}{0.42em}\selectfont,]  {$013$};
% Text Node
\draw (244,89.4) node [anchor=north west][inner sep=0.75pt]  [font=\fontsize{0.35em}{0.42em}\selectfont,]  {$010$};
% Text Node
\draw (294,89.4) node [anchor=north west][inner sep=0.75pt]  [font=\fontsize{0.35em}{0.42em}\selectfont,]  {$012$};
% Text Node
\draw (657,16.4) node [anchor=north west][inner sep=0.75pt]  [font=\tiny,color={rgb, 255:red, 65; green, 117; blue, 5 }  ,opacity=1 ,]  {$( a)$};
% Text Node
\draw (457,71.4) node [anchor=north west][inner sep=0.75pt]  [font=\tiny,color={rgb, 255:red, 65; green, 117; blue, 5 }  ,opacity=1 ,]  {$( b)$};
% Text Node
\draw (657,71.4) node [anchor=north west][inner sep=0.75pt]  [font=\tiny,color={rgb, 255:red, 65; green, 117; blue, 5 }  ,opacity=1 ,]  {$( d)$};
% Text Node
\draw (457,16.4) node [anchor=north west][inner sep=0.75pt]  [font=\tiny,color={rgb, 255:red, 65; green, 117; blue, 5 }  ,opacity=1 ,]  {$( c)$};
% Text Node
\draw (116,72.28) node [anchor=north west][inner sep=0.75pt]  [font=\tiny,color={rgb, 255:red, 65; green, 117; blue, 5 }  ,opacity=1 ,]  {$( b)$};
% Text Node
\draw (166,72.28) node [anchor=north west][inner sep=0.75pt]  [font=\tiny,color={rgb, 255:red, 65; green, 117; blue, 5 }  ,opacity=1 ,]  {$( d)$};
% Text Node
\draw  [color={rgb, 255:red, 65; green, 117; blue, 5 }  ,draw opacity=1 ]  (160.67, 39.5) circle [x radius= 9.18, y radius= 9.18]   ;
\draw (154.67,38.4) node [anchor=north west][inner sep=0.75pt]  [font=\fontsize{0.35em}{0.42em}\selectfont,]  {$33$};
% Text Node
\draw (53.67,38.4) node [anchor=north west][inner sep=0.75pt]  [font=\fontsize{0.35em}{0.42em}\selectfont,]  {$30$};
% Text Node
\draw  [color={rgb, 255:red, 65; green, 117; blue, 5 }  ,draw opacity=1 ]  (109.67, 39.5) circle [x radius= 9.18, y radius= 9.18]   ;
\draw (103.67,38.4) node [anchor=north west][inner sep=0.75pt]  [font=\fontsize{0.35em}{0.42em}\selectfont,]  {$32$};
% Text Node
\draw (153.67,139.4) node [anchor=north west][inner sep=0.75pt]  [font=\fontsize{0.35em}{0.42em}\selectfont,]  {$03$};
% Text Node
\draw  [color={rgb, 255:red, 208; green, 2; blue, 27 }  ,draw opacity=1 ]  (59.67, 140.5) circle [x radius= 9.18, y radius= 9.18]   ;
\draw (53.67,139.4) node [anchor=north west][inner sep=0.75pt]  [font=\fontsize{0.35em}{0.42em}\selectfont,]  {$00$};
% Text Node
\draw (103.67,139.4) node [anchor=north west][inner sep=0.75pt]  [font=\fontsize{0.35em}{0.42em}\selectfont,]  {$02$};
% Text Node
\draw  [color={rgb, 255:red, 65; green, 117; blue, 5 }  ,draw opacity=1 ]  (159.67, 90.5) circle [x radius= 9.18, y radius= 9.18]   ;
\draw (153.67,89.4) node [anchor=north west][inner sep=0.75pt]  [font=\fontsize{0.35em}{0.42em}\selectfont,]  {$13$};
% Text Node
\draw (53.67,89.4) node [anchor=north west][inner sep=0.75pt]  [font=\fontsize{0.35em}{0.42em}\selectfont,]  {$10$};
% Text Node
\draw  [color={rgb, 255:red, 65; green, 117; blue, 5 }  ,draw opacity=1 ]  (109.67, 90.5) circle [x radius= 9.18, y radius= 9.18]   ;
\draw (103.67,89.4) node [anchor=north west][inner sep=0.75pt]  [font=\fontsize{0.35em}{0.42em}\selectfont,]  {$12$};
% Text Node
\draw (166,16.4) node [anchor=north west][inner sep=0.75pt]  [font=\tiny,color={rgb, 255:red, 65; green, 117; blue, 5 }  ,opacity=1 ,]  {$( a)$};
% Text Node
\draw (116,16.4) node [anchor=north west][inner sep=0.75pt]  [font=\tiny,color={rgb, 255:red, 65; green, 117; blue, 5 }  ,opacity=1 ,]  {$( c)$};
% Text Node
\draw (426,164.4) node [anchor=north west][inner sep=0.75pt]  []  {$\widetilde{H}_{4}^{3,\text{lin}}$};
% Connection
\draw [color={rgb, 255:red, 0; green, 0; blue, 0 }  ,draw opacity=1 ]   (410,141) -- (441,141) ;
% Connection
\draw [color={rgb, 255:red, 0; green, 0; blue, 0 }  ,draw opacity=1 ]   (491,141) -- (460,141) ;
% Connection
\draw [color={rgb, 255:red, 0; green, 0; blue, 0 }  ,draw opacity=1 ]   (410,40) -- (440.64,40) ;
% Connection
\draw [color={rgb, 255:red, 0; green, 0; blue, 0 }  ,draw opacity=1 ]   (460.36,40) -- (491,40) ;
% Connection
\draw [color={rgb, 255:red, 0; green, 0; blue, 0 }  ,draw opacity=1 ] [dash pattern={on 4.5pt off 4.5pt}]  (410,91) -- (440.64,91) ;
% Connection
\draw [color={rgb, 255:red, 0; green, 0; blue, 0 }  ,draw opacity=1 ] [dash pattern={on 4.5pt off 4.5pt}]  (460.36,91) -- (491,91) ;
% Connection
\draw [color={rgb, 255:red, 0; green, 0; blue, 0 }  ,draw opacity=1 ]   (500.5,46) -- (500.5,85) ;
% Connection
\draw [color={rgb, 255:red, 0; green, 0; blue, 0 }  ,draw opacity=1 ]   (400.5,97) -- (400.5,135) ;
% Connection
\draw [color={rgb, 255:red, 0; green, 0; blue, 0 }  ,draw opacity=1 ]   (641,141) -- (610,141) ;
% Connection
\draw [color={rgb, 255:red, 0; green, 0; blue, 0 }  ,draw opacity=1 ]   (560,141) -- (591,141) ;
% Connection
\draw [color={rgb, 255:red, 0; green, 0; blue, 0 }  ,draw opacity=1 ] [dash pattern={on 4.5pt off 4.5pt}]  (640.64,40) -- (610,40) ;
% Connection
\draw [color={rgb, 255:red, 0; green, 0; blue, 0 }  ,draw opacity=1 ]   (591,40) -- (560,40) ;
% Connection
\draw [color={rgb, 255:red, 0; green, 0; blue, 0 }  ,draw opacity=1 ]   (640.64,91) -- (610,91) ;
% Connection
\draw [color={rgb, 255:red, 0; green, 0; blue, 0 }  ,draw opacity=1 ] [dash pattern={on 4.5pt off 4.5pt}]  (591,91) -- (560,91) ;
% Connection
\draw [color={rgb, 255:red, 0; green, 0; blue, 0 }  ,draw opacity=1 ] [dash pattern={on 4.5pt off 4.5pt}]  (600.5,46) -- (600.5,85) ;
% Connection
\draw [color={rgb, 255:red, 0; green, 0; blue, 0 }  ,draw opacity=1 ]   (550.5,46) -- (550.5,85) ;
% Connection
\draw [color={rgb, 255:red, 0; green, 0; blue, 0 }  ,draw opacity=1 ]   (650.5,100.86) -- (650.5,135) ;
% Connection
\draw [color={rgb, 255:red, 0; green, 0; blue, 0 }  ,draw opacity=1 ]   (600.5,135) -- (600.5,97) ;
% Connection
\draw [color={rgb, 255:red, 208; green, 2; blue, 27 }  ,draw opacity=1 ]   (510,141) -- (541,141) ;
% Connection
\draw [color={rgb, 255:red, 208; green, 2; blue, 27 }  ,draw opacity=1 ] [dash pattern={on 4.5pt off 4.5pt}]  (510,91) -- (541,91) ;
% Connection
\draw [color={rgb, 255:red, 0; green, 0; blue, 0 }  ,draw opacity=1 ]   (260,40) -- (291,40) ;
% Connection
\draw [color={rgb, 255:red, 0; green, 0; blue, 0 }  ,draw opacity=1 ]   (341,40) -- (310,40) ;
% Connection
\draw [color={rgb, 255:red, 0; green, 0; blue, 0 }  ,draw opacity=1 ] [dash pattern={on 4.5pt off 4.5pt}]  (260.36,141) -- (291,141) ;
% Connection
\draw [color={rgb, 255:red, 0; green, 0; blue, 0 }  ,draw opacity=1 ]   (310,141) -- (341,141) ;
% Connection
\draw [color={rgb, 255:red, 0; green, 0; blue, 0 }  ,draw opacity=1 ]   (260,91) -- (291,91) ;
% Connection
\draw [color={rgb, 255:red, 0; green, 0; blue, 0 }  ,draw opacity=1 ] [dash pattern={on 4.5pt off 4.5pt}]  (310,91) -- (341,91) ;
% Connection
\draw [color={rgb, 255:red, 0; green, 0; blue, 0 }  ,draw opacity=1 ] [dash pattern={on 4.5pt off 4.5pt}]  (300.5,135) -- (300.5,97) ;
% Connection
\draw [color={rgb, 255:red, 0; green, 0; blue, 0 }  ,draw opacity=1 ]   (350.5,135) -- (350.5,97) ;
% Connection
\draw [color={rgb, 255:red, 0; green, 0; blue, 0 }  ,draw opacity=1 ]   (250.5,85) -- (250.5,46) ;
% Connection
\draw [color={rgb, 255:red, 0; green, 0; blue, 0 }  ,draw opacity=1 ]   (300.5,46) -- (300.5,85) ;
% Connection
\draw [color={rgb, 255:red, 208; green, 2; blue, 27 }  ,draw opacity=1 ] [dash pattern={on 4.5pt off 4.5pt}]  (360,91) -- (391,91) ;
% Connection
\draw [color={rgb, 255:red, 208; green, 2; blue, 27 }  ,draw opacity=1 ]   (360,40) -- (391,40) ;
% Connection
\draw [color={rgb, 255:red, 0; green, 0; blue, 0 }  ,draw opacity=1 ]   (69.67,40) -- (100.31,40) ;
% Connection
\draw [color={rgb, 255:red, 0; green, 0; blue, 0 }  ,draw opacity=1 ] [dash pattern={on 4.5pt off 4.5pt}]  (151.31,40) -- (120.03,40) ;
% Connection
\draw [color={rgb, 255:red, 0; green, 0; blue, 0 }  ,draw opacity=1 ] [dash pattern={on 4.5pt off 4.5pt}]  (70.03,141) -- (100.67,141) ;
% Connection
\draw [color={rgb, 255:red, 0; green, 0; blue, 0 }  ,draw opacity=1 ]   (119.67,141) -- (150.67,141) ;
% Connection
\draw [color={rgb, 255:red, 0; green, 0; blue, 0 }  ,draw opacity=1 ]   (69.67,91) -- (100.31,91) ;
% Connection
\draw [color={rgb, 255:red, 0; green, 0; blue, 0 }  ,draw opacity=1 ]   (120.03,91) -- (150.31,91) ;
% Connection
\draw [color={rgb, 255:red, 208; green, 2; blue, 27 }  ,draw opacity=1 ] [dash pattern={on 4.5pt off 4.5pt}]  (110.17,135) -- (110.17,100.86) ;
% Connection
\draw [color={rgb, 255:red, 208; green, 2; blue, 27 }  ,draw opacity=1 ]   (160.17,135) -- (160.17,100.86) ;
% Connection
\draw [color={rgb, 255:red, 208; green, 2; blue, 27 }  ,draw opacity=1 ]   (60.17,85) -- (60.17,46) ;
% Connection
\draw [color={rgb, 255:red, 208; green, 2; blue, 27 }  ,draw opacity=1 ] [dash pattern={on 4.5pt off 4.5pt}]  (110.17,49.86) -- (110.17,81.14) ;

\end{tikzpicture}
    \caption{The state graphs \(\widetilde{H}_{4}^{\mathrm{lin},2}\) and \(\widetilde{H}_{4}^{\mathrm{lin},3}\) of the linear
    variant of the parity-constrained four-peg Tower of Hanoi. Red edges correspond to moves of the
    largest disc, and dashed edges indicate an optimal path for objective~\texttt{(a)}.}
    \label{fig:linear_state_graphs}
\end{figure}

\section{Conclusion, Discussion, and Future Directions}
In this paper, we introduced a parity-constrained variant of the four-peg Tower of Hanoi problem, in which two pegs are neutral and the two others are reserved for even- and odd-labelled discs. We considered four natural transfer objectives and proposed four recursive algorithms for them. The corresponding candidate move counts \(a_n,b_n,c_n,d_n\) satisfy a coupled system of recurrences, from which we derived simplified recurrences, higher-order recurrences, and explicit closed formulas. These formulas reveal a periodic behavior modulo \(6\) and show that the four candidate sequences have the same half-exponential order of growth, namely \(\Theta((\sqrt{2})^n)\). The central point of the paper is now formulated as the main optimality conjecture: the candidate values \(a_n,b_n,c_n,d_n\) are conjectured to coincide with the true optimal values \(A_n,B_n,C_n,D_n\). We also investigated the number of shortest solutions, proposed conjectural recurrence formulas for these multiplicities, examined a linear version of the problem in which only adjacent moves are allowed, and introduced the parity-constrained Hanoi state graph \(\widetilde{H}_4^n\), for which several structural and graph-theoretic properties were studied.

Several directions remain open. The first and most important one is to prove or disprove the uniqueness of the largest disc move conjecture and  the  main optimality conjecture, namely,  Conjectures~\ref{conj:largest-disc-once} and~\ref{conj:mainconj}. Such a proof would require understanding whether the canonical intermediate configurations used by the recursive algorithms are unavoidable in shortest solutions, a difficulty reminiscent of the structural issues arising in the Frame--Stewart problem. Beyond this central question, the conjectural aspects of the present work also deserve further study: Conjecture~\ref{conj:bounds} on the comparison between the parity-constrained lengths and the classical four-peg optimum, Conjecture~\ref{conj:number-optimal-solutions} on the recurrence structure of the numbers of shortest solutions, and Conjecture~\ref{conj:diameter_equals_a_n} asserting that the diameter of the state graph \(\widetilde{H}_4^n\) is equal to the candidate value \(a_n\). It would also be natural to generalize the model to more than four pegs, to study parity-constrained Hanoi problems associated with other movement digraphs, such as cyclic and star digraphs, and to compare how the geometry of allowed moves affects lengths, multiplicities, and graph structure; see \cite[Chapter 8]{HinzMythsMaths2018} for related Tower of Hanoi variants with movement digraphs. Finally, many questions about the state graphs remain open, including perfectness, metric dimension, domination-type parameters, spectral properties, and the structure of the linear state graphs \(\widetilde{H}_{4}^{\mathrm{lin},n}\), whose first examples suggest a particularly regular geometry.

\label{sec:Conclusion}

\section*{Acknowledgments}

 The author is very grateful to Professor Andreas M.~Hinz (Munich) for his careful reading of an earlier version of this manuscript and for his thoughtful comments on the proposed recursive arguments. In particular, his criticism of the attempted proof of optimality (Conjectures~\ref{conj:largest-disc-once} and \ref{conj:mainconj}) was essential in clarifying the main structural difficulty of the problem. These comments led to the present, more cautious formulation of the recursive algorithms as conjecturally optimal and helped improve the overall exposition of the paper.

% The author is also deeply grateful to the two anonymous reviewers for their careful reading, insightful comments, and constructive suggestions. In particular, the author wishes to acknowledge that Section~\ref{subsec:alternative_formulation}, the part of the proof of Theorem~\ref{theo:asymptoticssubsequence} beginning with the second paragraph, the part of the proof of Theorem~\ref{theo:asymptotic_growth} beginning with the second paragraph, as well as Lemma~\ref{lem:asymptotic-mod6} and the asymptotic Relation~\eqref{eq:edge-asymptotics}, rely directly on arguments and formulations proposed by Reviewer~\#2.

\bibliographystyle{plainurl} 
\bibliography{bibou}

\end{document}